\newtheorem{theorem}{Theorem}[section]
\newtheorem{lemma}[theorem]{Lemma}
\newtheorem{cor}[theorem]{Corollary}
\newtheorem{question}[theorem]{Question}
\newtheorem{definition}[theorem]{Definition}
\newtheorem{notation}[theorem]{Notation}
\newtheorem{proposition}[theorem]{Proposition}
\newtheorem{remark}[theorem]{Remark}
\newtheorem*{T1}{Theorem~\ref{theorem.main}}
\newcommand{\R}{\mathbb{R}}
\newcommand{\E}{\mathcal{E}}
\DeclareMathOperator{\diam}{diam}
\DeclareMathOperator{\mesh}{mesh}
\author{Silvère Gangloff}
\address[S. Gangloff]{AGH University of Science and Technology, Faculty of Applied
	Mathematics, al.
	Mickiewicza 30, 30-059 Krak\'ow, Poland
}
\email[S.~Gangloff]{sgangloff@agh.edu.pl}
\author{Piotr Oprocha}
\address[P.~Oprocha]{AGH University of Science and Technology, Faculty of Applied
	Mathematics, al.
	Mickiewicza 30, 30-059 Krak\'ow, Poland
	-- and --
	Centre of Excellence IT4Innovations - Institute for Research and Applications of Fuzzy Modeling, University of Ostrava, 30. dubna 22, 701 03 Ostrava 1, Czech Republic.}
\email[P.~Oprocha]{oprocha@agh.edu.pl}
\title[Cantor dynamical system is slow iff all finite orbits are attracting]{A Cantor dynamical system is slow if and only if all its finite orbits are attracting}
\numberwithin{equation}{section}
\begin{document}
	
	\begin{abstract}
		In this paper we completely solve the problem of when a Cantor dynamical system $(X,f)$ 
		can be embedded in $\mathbb{R}$ with vanishing derivative. 
		For this purpose we construct a refining sequence of
		marked clopen partitions of $X$ 
		which is adapted to a dynamical system of this kind. 
		It turns out that there is a huge class of such systems.
	\end{abstract}

		\maketitle
		
\section{Introduction}

We say that a dynamical system $(X,f)$ can be \textit{embedded in the real line with vanishing derivative} when there exists a differentiable function $g\colon \R \to\R $, a closed set $Z \subset \mathbb{R}$ invariant for $g$ and a homeomorphism $\psi\colon X \rightarrow Z$ such that $g|_Z =\psi \circ f \circ \psi^{-1}$ and $g'|_Z \equiv 0$.
In this paper we completely solve the following problem:

\begin{question}\label{q:cantor}
What maps acting on a Cantor set can be embedded in the real line with vanishing derivative?
\end{question}
	
The well known Banach fixed point theorem implies that a contraction on compact metric space must have a fixed point.
Therefore searching for an example of map $f$ on the real line with a closed invariant set without fixed points on which $f'\equiv 0$
seems a task doomed to failure. In fact, it was suspected by Edrei in his paper from 1952 \cite{Edrei}
that even a weaker condition cannot be satisfied. Generally speaking, Edrei conjectured that any map on a compact set which locally does not increase distances must be an isometry.
Soon after \cite{Williams} Williams provided examples of maps which are not local isometries at some points,
answering the original question of Edrei. However all these examples have isolated points and some of them also have fixed points.
Since then relations between local shrinking and periodic points remained unclear, until very recently.
In 2016 Jasi\'nski and Ciesielski constructed in \cite{ciesielski} an embedding of 2-adic odometer in the real line.
The embedding was obtained by direct application of Jarn\'\i{}k theorem together with very delicate construction of
a metric on Cantor set leading to derivative zero. This technique was in huge part relying on a clever representation of
the 2-adic group defining odometer. This technique was then extended in \cite{BKOAHP} to all odometers, together with some other 
carefully constructed examples such as an attractor-repellor pair or transitive non-minimal Cantor system. 
Still, it was not clear how much vanishing derivative is correlated with the existence of a metric making the map
an isometry or at least having entropy zero. At this point it was expected that such an embedding may not exist for expansive maps (in particular subshifts)
since these systems have divergence of orbits hidden in the dynamics (see discussion in \cite{BKO}). Then in~\cite{BKO}, J.P.Boro\'nski, J.Kupka and P.Oprocha brought a surprising answer to this, showing 
that every	minimal dynamical system on a Cantor set can be embedded in the real line with vanishing derivative everywhere. The proof of this result makes use of J.-M.Gambaudo 
and M.Martens~\cite{Gambaudo-Martens}
representation of minimal dynamical systems on the Cantor set with graph 
coverings which satisfy certain properties. These representations are to some extent similar to representations of odometers,
sharing the property that there is a unique vertex at which all the cycles intersect. This property was crucial in the main proof.
	
It was clear from examples in \cite{BKOAHP} that minimality does not characterize the class of Cantor systems which can be embedded in the real line with vanishing 
derivative. On the other hand, it was known that periodic points are in many cases
an obstacle for the required embedding. As a consequence the natural class to consider was the one of aperiodic systems.
It is known that aperiodic 
systems on a Cantor set can be represented 
with Bratelli-Vershik diagrams~\cite{Meydinets}. There are also techniques (e.g. see \cite{Shinomura.odometers.extensions})
which allow to obtain graph coverings 
representation out of Kakutati-Rohlin towers, rooting the Bratelli-Vershik representation for minimal systems on the Cantor set.
Despite the fact that representations of aperiodic systems do not have the convenient structure of Gambaudo-Martens representations and can consist of cycles with numerous points of intersections, we managed to
describe them in a way suitable for our needs. One valuable tool in our research was a deep understanding of aperiodic systems reflected in recent results (e.g. see \cite{Downarowicz}). The paper is almost completely devoted to proving the following theorem, providing a complete answer to Question~\ref{q:cantor}.
	
\begin{theorem}\label{theorem.main}
A dynamical system $(X,f)$ on a Cantor set $X$ can be embedded in the real line with vanishing derivative if and only if all finite orbits 
of $(X,f)$ are attractors. 
\end{theorem}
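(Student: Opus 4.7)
Suppose $(X,f)$ admits the required embedding, via $\psi\colon X\to Z$ and $g\colon\R\to\R$ with $g'|_Z\equiv 0$. Let $O=\{p,f(p),\dots,f^{n-1}(p)\}$ be any finite orbit of $f$, necessarily periodic. Then $\psi(O)$ is a period-$n$ orbit of $g$, and the chain rule gives
\[
(g^n)'(\psi(p))\;=\;\prod_{i=0}^{n-1}g'(\psi(f^i(p)))\;=\;0.
\]
Hence $\psi(p)$ is a superattracting periodic point of $g$: there exists $r>0$ on which $g^n$ is a strict contraction, and likewise at each iterate. Pulling back by $\psi^{-1}$ yields a neighborhood of $O$ in $X$ whose forward $f$-orbits accumulate on $O$, so $O$ is attracting.

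\textbf{Sufficiency, decomposition.} Assume every finite orbit of $f$ is attracting. The union $B$ of the basins of attraction of all such orbits is an open, $f$-invariant subset of $X$, and its complement $X_a:=X\setminus B$ is a compact $f$-invariant zero-dimensional set on which $f$ is aperiodic. I would treat the two pieces separately and glue their realizations in disjoint parts of $\R$. The aperiodic system $(X_a,f|_{X_a})$ is then attacked via its Bratteli--Vershik / graph-covering representation (cf.~\cite{Meydinets,Shinomura.odometers.extensions,Downarowicz}), which provides the combinatorial scaffolding for the construction.

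\textbf{Sufficiency, embedding.} On $B$, attraction supplies an elementary model: each basin embeds in a short interval with the finite orbit placed at points around which $g$ is a plateau (piecewise-linear with geometrically decaying slopes), automatically forcing $g'=0$ at the periodic points. On $X_a$ I would build the \emph{refining sequence of marked clopen partitions} announced in the abstract: a nested sequence $(\mathcal{P}_n)_{n\ge 0}$ of clopen partitions of $X_a$ whose meshes tend to $0$, with each atom $C\in\mathcal{P}_n$ carrying a marker describing how $f$ transports $C$ toward its image atom. To every $C$ I would then assign a closed interval $I_C\subset\R$, nested so that $I_C\subset I_{C'}$ whenever $C\subset C'$. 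The quantitative key is to arrange
\[
\frac{\diam(I_{f(C)})}{\diam(I_C)}\;\longrightarrow\;0\quad\text{as }n\to\infty,
\]
uniformly in $C\in\mathcal{P}_n$; this forces $g'\equiv 0$ on $\psi(X_a)$ once $g$ is defined to agree with the combinatorics of the markers between nested intervals and extended piecewise-linearly on the complement of $\psi(X)$ in $\R$.

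\textbf{Main obstacle.} The hard step is the construction on $X_a$. In the minimal case~\cite{BKO}, Gambaudo--Martens coverings have a distinguished vertex through which all cycles pass, which makes the refining partitions highly structured and the geometric estimates straightforward. In the general aperiodic setting the cycles of the graph covering meet at several vertices simultaneously, and the markers together with the nested intervals $I_C$ must be chosen to be compatible \emph{at the same time} with $f$, with the refinement $\mathcal{P}_n\to\mathcal{P}_{n+1}$, and with the geometric shrinking displayed above. Managing these three constraints in the presence of multiply-crossing cycles will require the careful description of aperiodic representations alluded to in the introduction and will occupy the bulk of the paper.
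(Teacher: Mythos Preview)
Your necessity argument is correct and essentially matches the paper's (Lemma~\ref{lemma.derivative} and Corollary~\ref{cor:attracting}).

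The sufficiency sketch, however, has a genuine structural gap. You propose to split $X=B\sqcup X_a$ and ``glue their realizations in disjoint parts of~$\R$.'' But $B$ is open and in general \emph{not} clopen: points of $B$ can accumulate on $X_a$ (an orbit may wander arbitrarily close to the aperiodic part for a long time before finally falling into an attracting neighbourhood). Since $\psi$ must be a homeomorphism on all of $X$, such points of $B$ must be sent close to $\psi(X_a)$, so the two pieces cannot live in disjoint intervals. Any correct construction has to handle the \emph{interface}---orbits that pass from the vicinity of $X_a$ into basins---and this is exactly why the paper does not decompose. Instead it builds, on the whole of $X$, \emph{supercyclical partitions} carrying simultaneously an ``attracted part'' (clopen neighbourhoods of the short periodic orbits) and a ``supercyclical part,'' and the intervals $\iota_n(v)$ are constructed so that the shrinking works across the transition (Section~\ref{section.attracted.operations} and the second case in Section~\ref{section.sufficient}).

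A second, related gap is in your shrinking condition $\diam(I_{f(C)})/\diam(I_C)\to 0$ ``uniformly in $C\in\mathcal P_n$.'' For this ratio even to make sense you need $f(C)$ to lie inside a single atom, i.e.\ the partition must be forward-compatible with $f$. Getting this while \emph{also} having $\diam(I_{\cdot})$ shrink along every edge is exactly where the paper spends its effort: atoms with several outgoing edges (``divergent vertices'') destroy the estimate, and Sections~\ref{section.krieger}--\ref{section.rectification} introduce Krieger markers, well-marked partitions, and a rectification procedure whose sole purpose is to push all divergent vertices onto marker positions so that they occur at most once along any inverse-limit thread. Your sketch flags the multiply-crossing cycles as the obstacle, which is right, but does not propose a mechanism to resolve it; the marker machinery is that mechanism, and it has to be run on the whole system, not on $X_a$ alone.
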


In fact it is possible to state this result in a slightly more general way.	
If $X$ is a finite set then it is obvious that it can be embedded in the real line with vanishing derivative.
If $X$ is infinite but zero dimensional then we can present $X$ as $X=C\cup R$ where $C$ is a Cantor set and $R$ is at most countable and consists in isolated points.
We can then replace each isolated point $x$ by a Cantor set $C_x$ and define $g(y)=f(x)$ for every $y\in C_x$. Additionally, the limit set of any $y\in C_x$
is the same as the one of $x$. This way we obtain
a map $g$ acting on Cantor set such that $g|_X=f$. We can then apply Theorem~\ref{theorem.main} to $g$, obtaining the following.

\begin{theorem}\label{cor.main}
Any dynamical system $(X,f)$ on a zero-dimensional compact set $X$ can be embedded in the real line with vanishing derivative if and only if all finite orbits of $(X,f)$ are attractors. 
\end{theorem}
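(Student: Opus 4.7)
The plan is to deduce Theorem~\ref{cor.main} from Theorem~\ref{theorem.main} by enlarging $X$ to a Cantor set while preserving the dynamics after one iteration.

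For the necessity direction I would argue directly, without invoking Theorem~\ref{theorem.main}. Suppose $(X,f)$ is embedded in $\R$ via $\psi\colon X\to Z$ and differentiable $g\colon\R\to\R$ with $g'|_Z\equiv 0$. For a finite orbit $O=\{p_0,\dots,p_{n-1}\}$ of $f$, the chain rule gives $(g^n)'(\psi(p_0))=0$, so the definition of the derivative implies that $g^n$ is a local contraction (with arbitrarily small factor) near $\psi(p_0)$. Pulling back by $\psi$ yields a clopen neighborhood of $p_0$ in $X$ whose $f^n$-iterates converge to $p_0$; applied at each point of $O$ and combined with a suitably small $f$-forward-invariant clopen refinement, this shows that $O$ is an attractor of $f$.

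For sufficiency, the case of finite $X$ is trivial, so I would assume $X$ is infinite and write $X=C\cup R$ with $R$ the (at most countable) set of isolated points and $C$ its complement. Following the sketch preceding the theorem, I would attach to each $x\in R$ a Cantor set $C_x$ containing $x$, choosing the topology so that $\diam(C_x)\to 0$ whenever $x$ approaches any accumulation point of $X$. The enlarged space $X'=C\cup\bigcup_{x\in R}C_x$ is then a compact zero-dimensional metric space without isolated points, hence a Cantor set, and the extension $g\colon X'\to X'$ defined by $g|_X=f$ and $g\equiv f(x)$ on $C_x\setminus\{x\}$ is continuous thanks to the shrinking-diameter condition.

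The crucial step is to verify that $(X',g)$ satisfies the hypothesis of Theorem~\ref{theorem.main}. Since $g$ sends every attached piece $C_x\setminus\{x\}$ into $X$ in one step, the periodic points of $g$ all lie in $X$ and coincide with those of $f$; and given a finite orbit $O\subseteq X$ with clopen $f$-forward-invariant attracting basin $U\subseteq X$, the set $U':=U\cup\bigcup_{x\in R\cap U}(C_x\setminus\{x\})$ is a clopen, $g$-forward-invariant attracting basin in $X'$. Theorem~\ref{theorem.main} then furnishes an embedding $\psi'\colon X'\hookrightarrow\R$ with vanishing derivative, and restricting $\psi'$ to $X\subseteq X'$ yields the desired embedding of $(X,f)$, because $g|_X=f$ and $\psi'(X)$ remains invariant under the extended map on $\R$. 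The main obstacle, in my view, is the careful design of the topology on $X'$ when $R$ accumulates on points of $C$, so that $X'$ is genuinely a Cantor set and $g$ is continuous; once this is arranged, the rest is clean because $g$ collapses each attached $C_x\setminus\{x\}$ to a single point of $X$ after one iterate.
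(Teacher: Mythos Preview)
Your proposal is correct and follows essentially the same route as the paper, which only sketches the reduction to Theorem~\ref{theorem.main} in the paragraph preceding the statement: decompose $X$ into isolated points and the rest, blow up each isolated point to a small Cantor set, extend $f$ constantly on each attached piece, and apply Theorem~\ref{theorem.main} to the resulting Cantor system. Your write-up in fact supplies details the paper omits --- the shrinking-diameter condition to guarantee $X'$ is compact without isolated points and $g$ is continuous, the observation that periodic orbits of $g$ coincide with those of $f$, and the explicit attracting basin $U'$ --- so nothing further is needed beyond carrying out the topology on $X'$ carefully, which you have correctly flagged as the only genuine technical point.
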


In \cite{ciesielski} the authors introduced the property of
\textit{locally radially shrinking} defined as follows:
\begin{itemize}
	\item[(LRS)] for every $x\in X$ there exists an $\epsilon_x>0$ such that $d(x,y)<\epsilon_x$ implies $d(f(x),f(y))<d(x,y)$ for all $y\neq x$.
\end{itemize}
It is clear from this definition that if a map $f$ with (LRS) has a periodic point, then its orbit is attracting. This combined with Theorem~\ref{theorem.main} shows that on zero-dimensional compact sets, the class of maps with (LRS) and the class of maps which can be embedded in real line with vanishing derivative are exactly the same. This reveals another unexpected connection between shrinking and dynamics. It is also worth emphasizing that being attractor is a topological property, while (LRS) depends on metrics.

This article is organized as follows: Section~\ref{section.graph.coverings} is an 
	exposition of the graph coverings representation of Cantor dynamical systems.
	For the reader's convenience we also provide proofs of some elementary facts, in 
	order for the article to be self-contained.
	Section~\ref{section.finite.orbits} contains a construction of graph covering
	representations adapted to Cantor 
	dynamical systems which have only attracting finite orbits, 
	and Section~\ref{section.proof} contains a proof of Theorem~\ref{theorem.main}.
	
\section{Zero-dimensional dynamical systems and graph coverings\label{section.graph.coverings}}
	
	Let us start with basic definitions.
		A topological space is called \textbf{zero-dimensional} when it has a base which consists 
		of clopen (open and closed) sets. A \textit{Cantor set} is any zero-dimensional compact metric space without isolated points. 
		It is well known that all Cantor sets are homeomorphic.
	
		In this paper $(X,d)$ is a fixed Cantor set and $f\colon X \rightarrow X$ is a continuous 
		function. We will later impose additional conditions on $f$.
	For short, the pair $(X,f)$ will be called a \textit{Cantor system} or a \textit{dynamical system} (on the Cantor set $X$).

	\begin{notation}
	For every set $\mathcal{U}$ of open subsets of $X$ (in particular a 
	partition or a subset of a partition), we denote $\mathcal{E}(\mathcal{U})$ the union of 
	its elements.
	\end{notation}
	
	\begin{notation}
	For the remainder of this paper, we fix a sequence of partitions $(\mathcal{U}^0_n)_{n \ge 1}$ of $X$ into clopen sets satisfying the
	condition:
	\begin{eqnarray}
	&\forall\; n\ge 1,\; \forall\; u\in \mathcal{U}^0_{n},\; u=\mathcal{E}(\{v\in \mathcal{U}^0_{n+1} : v \subset u\}) \qquad \text{ and }\label{eq:star}\\
	&\displaystyle{\lim_{n\to\infty}\mesh(\mathcal{U}^0_n)} =0.\nonumber
	\end{eqnarray}	
	\end{notation}
\begin{remark}
	 For every finite clopen partition $\mathcal{V}$ of $X$, there exists some integer $n \ge 1$ such that every element of $\mathcal{V}$ is the union of some elements in $\mathcal{U}^0_n$.

\end{remark}
	
	We reproduce in Section~\ref{section.general}, for completeness, the characterization of Cantor systems in terms
	of graph coverings that one can find in~\cite[Theorem 3.9]{Shinomura}. We prove a characterization of these representations for aperiodic Cantor systems in Section~\ref{section.aperiodic}. 
	
	\subsection{General formulation\label{section.general}}
		
	\subsubsection{Clopen partitions and graphs}
	
	In the following we will use finite clopen partitions and finite graphs in order to represent the 
	behavior of the dynamical system $(X,f)$. Preliminary notations 
	are introduced in this section.
	
	\begin{notation}
	For every finite clopen partition $\ \mathcal{U}$ of $X$, we will denote by $\ G(\mathcal{U})$ 
	the finite directed graph whose vertex set is $\ \mathcal{U}$ and whose edges are the 
	pairs $(u,v) \in \mathcal{U}^2$ such that there exists
	$x \in u$ with $f(x) \in v$.
	\end{notation}
	
	\begin{definition}
	Let us consider two directed 
	graphs $G$ and $G'$ whose vertex sets are respectively $V$ and $V'$ 
	and respective sets of edges are $E$ and $E'$. A \textbf{graph morphism} from $G$ to $G'$ is 
	a function $\pi : V \rightarrow V'$ such that whenever $(u,v) \in E$, 
	$(\pi(u),\pi(v)) \in E'$.
	\end{definition}
	
	\begin{definition}
	For two finite clopen partitions $\mathcal{U},\mathcal{V}$, we say that $\mathcal{V}$
	\textbf{refines} $\mathcal{U}$ when for all $v \in \mathcal{V}$ there exists 
	$u \in \mathcal{U}$ such that $v \subset u$. This relation will be denoted 
	$\mathcal{V} \prec \mathcal{U}$. 
	\end{definition}
	
	For two finite clopen partitions $\mathcal{U},\mathcal{V}$, we will also denote 
	\[\mathcal{U} \vee \mathcal{V} = \left\{ u \cap v \ : \ u \in \mathcal{U}, v \in \mathcal{V}\right\},\]
	which is another finite clopen partition refining both $\mathcal{U}$ and $\mathcal{V}$.
	
	\begin{notation}
	Let us consider two finite clopen partitions $\mathcal{U},\mathcal{V}$ of $X$ such that 
	$\mathcal{V} \prec \mathcal{U}$. We will denote $\pi_{\mathcal{U}}^{\mathcal{V}}$ the 
	graph morphism from $G(\mathcal{V})$ to $G(\mathcal{U})$ such that for all 
	$u \in \mathcal{V}$, $u \subset \pi_{\mathcal{U}}^{\mathcal{V}}(u)$.
	\end{notation}
	
	\subsubsection{From zero-dimensional systems to graph coverings\label{section.forward}}

	\begin{definition}\label{definition.graph.covering}
		The \textbf{graph covering representation} of the dynamical system $(X,f)$ relative 
		to a sequence of finite clopen partitions $\mathbb{U}= (\mathcal{U}_n)_{n \ge 1}$ which satisfy the condition \eqref{eq:star}, denoted by $\mathcal{G}(\mathbb{U},f)$, is the data of two sequences $(G_n)_{n \ge 1}$ and 
		$(\pi_n)_{n \ge 1}$ such that:
		\begin{enumerate}[(i)]
			\item  for all $n \ge 1$, $G_n = G(\mathcal{U}_n)$; 
			\item for all $n \ge 1$, $\pi_n = \pi_{\mathcal{U}_{n}}^{\mathcal{U}_{n+1}}$. 
		\end{enumerate}
	For all $m \ge n$ we will also set $\pi_{n,m} = \pi_n \circ ... \circ \pi_m$.
We denote by $V_{\mathbb{U},f}$ the following set: 
		\[V_{\mathbb{U},f} = \displaystyle{\left\{(u_n)_{n \ge 1} \in \prod_{n \ge 1} V_n : \forall n \ge 1, \pi_n (u_{n+1}) = u_n\right\}}.\]
	\end{definition}

We may view $V_{\mathbb{U},f}$ as an inverse limit defined by the spaces $V_n$ together with bonding maps $\pi_n$.
	
	\begin{lemma}\label{lemma.no.double.image}
		For each $\textbf{u} \in V_{\mathbb{U},f}$, there exists a unique $\textbf{v} \in V_{\mathbb{U},f}$ such that for all $n \ge 1$, $(\textbf{u}_n,\textbf{v}_n) \in E_n$. (we will set $\underline{f}_{\mathbb{U}}(\textbf{u}):=\textbf{v}$).
	\end{lemma}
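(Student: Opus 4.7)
My plan is to use the mesh-shrinking condition to reduce the problem to a single point, then propagate the edges to a unique target sequence.

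First, I would observe that the coherence condition $\pi_n(\textbf{u}_{n+1})=\textbf{u}_n$ combined with the definition of $\pi_n = \pi_{\mathcal{U}_n}^{\mathcal{U}_{n+1}}$ simply says $\textbf{u}_{n+1}\subset \textbf{u}_n$. So $(\textbf{u}_n)_{n\ge 1}$ is a decreasing sequence of nonempty closed sets in the compact space $X$ whose diameters tend to $0$ because $\mesh(\mathcal{U}_n)\to 0$. Hence $\bigcap_{n\ge 1}\textbf{u}_n=\{x\}$ for a unique point $x\in X$.

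Next, for existence, I would define $\textbf{v}_n$ to be the unique element of $\mathcal{U}_n$ containing $f(x)$. Because $f(x)$ lies in both $\textbf{v}_{n+1}$ and $\textbf{v}_n$ and the partitions are disjoint, we automatically have $\textbf{v}_{n+1}\subset \textbf{v}_n$, so $\pi_n(\textbf{v}_{n+1})=\textbf{v}_n$ and $\textbf{v}=(\textbf{v}_n)_{n\ge 1}\in V_{\mathbb{U},f}$. Moreover, since $x\in \textbf{u}_n$ and $f(x)\in \textbf{v}_n$, the definition of $G(\mathcal{U}_n)$ gives $(\textbf{u}_n,\textbf{v}_n)\in E_n$ for every $n$.

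The main subtlety is uniqueness: if $\textbf{v}'=(\textbf{v}'_n)\in V_{\mathbb{U},f}$ also satisfies $(\textbf{u}_n,\textbf{v}'_n)\in E_n$ for all $n$, the witness in $\textbf{u}_n$ mapped into $\textbf{v}'_n$ may a priori differ from $x$. I would handle this as follows. For each $n$ pick $y_n\in \textbf{u}_n$ with $f(y_n)\in \textbf{v}'_n$. Since $\diam(\textbf{u}_n)\to 0$ and $x\in \textbf{u}_n$, we have $y_n\to x$, so by continuity $f(y_n)\to f(x)$. On the other hand, $(\textbf{v}'_n)$ is a nested sequence of clopen sets with diameters tending to $0$, so it intersects in a single point $z$, and $d(f(y_n),z)\le \diam(\textbf{v}'_n)\to 0$, whence $f(y_n)\to z$. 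Therefore $f(x)=z\in \textbf{v}'_n$ for every $n$, which forces $\textbf{v}'_n=\textbf{v}_n$ by the disjointness of $\mathcal{U}_n$.

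The main (and only really nontrivial) step is this last uniqueness argument, where one needs to combine continuity of $f$ with the vanishing-mesh condition to match the witness points $y_n$ to $x$ in the limit; everything else is bookkeeping from the definitions.
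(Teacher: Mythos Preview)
Your proof is correct and follows essentially the same approach as the paper: both identify the unique point $x\in\bigcap_n\textbf{u}_n$, define $\textbf{v}_n$ as the partition element containing $f(x)$, and for uniqueness pick witnesses $y_n\in\textbf{u}_n$ with $f(y_n)\in\textbf{v}'_n$, then use $y_n\to x$ and continuity of $f$ to conclude. Your conclusion via the intersection point $z=\bigcap_n\textbf{v}'_n$ is a slight repackaging of the paper's argument (which instead shows $\textbf{v}'_m\subset\textbf{v}_k$ for $m$ large and projects down), but the substance is identical.
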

	
	\begin{proof}
		The proof is standard. We give it for completeness.
		\begin{itemize}
			\item \textbf{Existence:} Let us consider $\textbf{u} \in V_{\mathbb{U},f}$, and denote $x$ the 
			unique element of $X$ such that for all $n \ge 1$, $x \in \textbf{u}_n$. 
			Let us fix $\textbf{v}$ such that for all $n \ge 1$, $f(x) \in \textbf{v}_n$. By definition for all $n \ge 1$, $(\textbf{u}_n,\textbf{v}_n) \in E_n$ and since $f(x)\in \textbf{v}_{n+1}, \textbf{v}_{n}$
			we have $\textbf{v}_{n+1}\subset \textbf{v}_{n}$ and so $\pi_n (\textbf{v}_{n+1}) = \textbf{v}_n$. In particular $\textbf{v}\in V_{\mathbb{U},f}$.
			\item \textbf{Uniqueness:} Moreover let 
			us consider $\textbf{v}'\in V_{\mathbb{U},f}$ such that 
			for all $n \ge 1$, $(\textbf{u}_n,\textbf{v}'_n) \in E_n$. For all $n \ge 1$, by definition 
			of the graph $G_n$, there exists $x_n \in \textbf{u}_n$ such that $f(x_n) \in \textbf{v}'_n$. The 
			sequence $(x_n)_n$ converges towards $x$ and since $f$ is continuous $f(x_n)$ converges towards $f(x)$. As a consequence for all $k \ge 1$ there exists some $l$ such that for all $m \ge l$, 
			$\textbf{v}'_m\subset \textbf{v}_k$. In particular $\textbf{v}'_k\cap \textbf{v}_k=\pi_{k,m}(\textbf{v}'_{m+1})\cap \textbf{v}_k\neq \emptyset$, which implies
			$\textbf{v}'_k=\textbf{v}_k$. This proves $\textbf{v}'=\textbf{v}$, completing the proof.
		\end{itemize}
	\end{proof} 
	
	\begin{definition}
		Let us denote by $\varphi_{\mathbb{U},f}\colon X \rightarrow V_{\mathbb{U},f}$ (or 
		simply $\varphi$ when there is no ambiguity) the function such that for all $x \in X$, 
		$\varphi_{\mathbb{U},f}(x)$ is the unique (since $\text{mesh}(\mathcal{U}_n) \rightarrow 0$) sequence $\textbf{u} \in V_{\mathbb{U},f}$ such that for all $n \ge 1$, 
		$x \in \textbf{u}_n$.
	\end{definition}
	
	\noindent The following is straightforward:
	
	\begin{proposition}\label{prop:conj}
		The map $\varphi_{\mathbb{U},f}$ is a homeomorphism and $\underline{f}_{\mathbb{U}} = \varphi_{\mathbb{U},f} \circ f \circ \varphi_{\mathbb{U},f}^{-1}$.
	\end{proposition}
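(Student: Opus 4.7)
The plan is to verify the three claims---bijectivity, continuity, and conjugation---in that order, using mostly the mesh condition and the uniqueness statement of Lemma~\ref{lemma.no.double.image}.

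First I would show that $\varphi:=\varphi_{\mathbb{U},f}$ is a bijection. For injectivity, if $\varphi(x) = \varphi(y) = \mathbf{u}$, then $x,y \in \mathbf{u}_n$ for all $n$, hence $d(x,y) \le \mesh(\mathcal{U}_n) \to 0$, forcing $x=y$. For surjectivity, given $\mathbf{u}\in V_{\mathbb{U},f}$, the condition $\pi_n(\mathbf{u}_{n+1})=\mathbf{u}_n$ combined with the refinement condition \eqref{eq:star} means that $\mathbf{u}_{n+1}\subset \mathbf{u}_n$; so $(\mathbf{u}_n)_{n\ge 1}$ is a decreasing sequence of non-empty compact sets in $X$, whose intersection is non-empty by compactness and reduces to a single point $x$ because $\mesh(\mathcal{U}_n)\to 0$. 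By construction $\varphi(x)=\mathbf{u}$.

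Next I would check that $\varphi$ is a homeomorphism. Endow $V_{\mathbb{U},f}$ with the subspace topology inherited from $\prod_{n\ge 1} V_n$ (each $V_n$ being finite and discrete). Continuity of $\varphi$ is immediate: for each $n\ge 1$ and each $u\in V_n$, the preimage $\varphi^{-1}(\{\mathbf{v}\in V_{\mathbb{U},f}: \mathbf{v}_n=u\}) = u$ is clopen in $X$, and such cylinders generate the topology of $V_{\mathbb{U},f}$. Since $X$ is compact, $V_{\mathbb{U},f}$ is Hausdorff, and $\varphi$ is a continuous bijection, $\varphi$ is automatically a homeomorphism.

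Finally, for the conjugation identity, I would compare the two maps $\underline{f}_{\mathbb{U}}\circ\varphi$ and $\varphi\circ f$ pointwise and invoke the uniqueness part of Lemma~\ref{lemma.no.double.image}. Fix $x\in X$, set $\mathbf{u}=\varphi(x)$ and $\mathbf{w}=\varphi(f(x))$. Then $x\in\mathbf{u}_n$ and $f(x)\in\mathbf{w}_n$ for every $n$, so by the definition of $G_n$ we have $(\mathbf{u}_n,\mathbf{w}_n)\in E_n$ for all $n$. By the uniqueness clause of Lemma~\ref{lemma.no.double.image}, this forces $\mathbf{w}=\underline{f}_{\mathbb{U}}(\mathbf{u})$, i.e.\ $\varphi(f(x))=\underline{f}_{\mathbb{U}}(\varphi(x))$, as desired.

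The statement is essentially a standard inverse-limit verification, so I do not expect a genuine obstacle; the only point requiring some care is ensuring that the bonding relation $\pi_n(\mathbf{u}_{n+1})=\mathbf{u}_n$ actually yields set-theoretic inclusion $\mathbf{u}_{n+1}\subset\mathbf{u}_n$, which is exactly the content of the refinement condition~\eqref{eq:star} and the definition of $\pi_{\mathcal{U}}^{\mathcal{V}}$.
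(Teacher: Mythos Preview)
Your argument is correct and complete. The paper itself omits the proof entirely, declaring the proposition ``straightforward'', so your standard inverse-limit verification---injectivity from $\mesh(\mathcal{U}_n)\to 0$, surjectivity from nested compacta, homeomorphism via the compact-to-Hausdorff trick, and conjugation from the uniqueness clause of Lemma~\ref{lemma.no.double.image}---is exactly what is being taken for granted.
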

	
	\subsubsection{From graph coverings to zero-dimensional systems}
	
	Reciprocally, let us consider a sequence of finite graphs $\textbf{G}=(G_n)_{n \ge 1}$ and 
	a sequence $\mathbf{\pi}=(\pi_n)_{n \ge 1}$ of surjective graph morphisms $\pi_n : V_{n+1} \rightarrow V_n$, and assume that for all $u \in V_n$ there exists $v \in V_n$ such that $(u,v) \in E_n$,
	where $V_n$ is the vertex set of $G_n$ and $E_n$ its edge set. Let us denote $V_{\textbf{G},\pi}$ the set 
	\[V_{\textbf{G},\pi} = \displaystyle{\left\{(u_n) \in \prod_{n \ge 1} V_n : \forall n \ge 1, \pi_n (u_{n+1}) = u_n\right\}}.\]
	This set is a metric space with the metrization of Tychonoff product of 
	discrete topologies on the finite sets $V_n$, $n \ge 1$. Let us also assume that for all $\textbf{u} \in V_{\textbf{G},\pi}$, there exists a unique $\textbf{v} \in V_{\textbf{G},\pi}$ 
	such that $(\textbf{u}_n,\textbf{v}_n) \in E_n$  for all $n \ge 1$ (it is in particular 
	the case when for every $n$ that if $(u,v), (u,w)\in E_{n+1}$ then $\pi_n(v)=\pi_n(w)$).
	Denoting $f_{\textbf{G},\pi}(\textbf{u})$ 
	this sequence $\textbf{v}$, we have the following:
	
	\begin{lemma}
		The map $f_{\textbf{G},\pi} : V_{\textbf{G},\pi} \rightarrow V_{\textbf{G},\pi}$ is continuous (as a consequence $(V_{\textbf{G},\pi},f_{\textbf{G},\pi})$ is a dynamical system on a zero-dimensional compact metric space).
	\end{lemma}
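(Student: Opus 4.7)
The plan is to exploit the inverse-limit topology on $V_{\textbf{G},\pi}$, under which a sequence $\textbf{u}^{(k)}$ converges to $\textbf{u}$ precisely when, for every fixed level $n$, one has $\textbf{u}^{(k)}_n = \textbf{u}_n$ for all sufficiently large $k$. First I would note that $V_{\textbf{G},\pi}$ is a closed subset of the compact metrizable product $\prod_{n\ge 1} V_n$ (the defining conditions $\pi_n(\textbf{u}_{n+1})=\textbf{u}_n$ are coordinatewise equalities in discrete spaces, hence closed), so $V_{\textbf{G},\pi}$ is itself compact and metrizable; this gives the parenthetical assertion of the lemma as soon as continuity is established.

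For continuity, my strategy is sequential: fix $\textbf{u}\in V_{\textbf{G},\pi}$ and a sequence $\textbf{u}^{(k)}\to \textbf{u}$, set $\textbf{v}^{(k)}=f_{\textbf{G},\pi}(\textbf{u}^{(k)})$, and show that every accumulation point $\textbf{w}$ of $(\textbf{v}^{(k)})$ equals $f_{\textbf{G},\pi}(\textbf{u})$. By compactness, such accumulation points exist, and if they all coincide with $f_{\textbf{G},\pi}(\textbf{u})$ then the full sequence $(\textbf{v}^{(k)})$ converges to $f_{\textbf{G},\pi}(\textbf{u})$, which gives continuity at $\textbf{u}$.

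The key observation is that each edge relation $E_n$ is a subset of the finite set $V_n\times V_n$, and membership in $E_n$ is preserved by the kind of convergence we are using. Concretely, if $\textbf{v}^{(k_j)}\to \textbf{w}$, then for every $n$ we have $\textbf{u}^{(k_j)}_n=\textbf{u}_n$ and $\textbf{v}^{(k_j)}_n=\textbf{w}_n$ for all $j$ large enough. Since $(\textbf{u}^{(k_j)}_n,\textbf{v}^{(k_j)}_n)\in E_n$ by definition of $f_{\textbf{G},\pi}$, we conclude that $(\textbf{u}_n,\textbf{w}_n)\in E_n$ for every $n$. The uniqueness hypothesis built into the construction of $f_{\textbf{G},\pi}$ then forces $\textbf{w}=f_{\textbf{G},\pi}(\textbf{u})$.

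There is no real obstacle here; this is a standard inverse-limit continuity argument, and the uniqueness hypothesis is doing all the heavy lifting. The one point worth keeping in mind is to avoid constructing the image coordinate at level $n$ from only the preimage coordinate at the same level $n$ (which would need a stronger assumption such as the parenthetical ``if $(u,v),(u,w)\in E_{n+1}$ then $\pi_n(v)=\pi_n(w)$'' mentioned in the excerpt); working globally with accumulation points and the uniqueness of $\textbf{v}$ in $V_{\textbf{G},\pi}$ bypasses this subtlety cleanly.
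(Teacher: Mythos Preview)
Your proposal is correct and follows essentially the same argument as the paper: both use sequential continuity, pass to a convergent subsequence of the image sequence by compactness, observe that the edge relations $(\textbf{u}_n,\textbf{v}_n)\in E_n$ persist in the limit because the $V_n$ are discrete, and invoke the uniqueness hypothesis to identify the limit with $f_{\textbf{G},\pi}(\textbf{u})$. Your version is in fact slightly more explicit about why $V_{\textbf{G},\pi}$ is compact and about the role of compactness in guaranteeing accumulation points, but the structure is identical.
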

	
	\begin{proof}
		Let us consider some $\textbf{u} \in V_{\textbf{G},\pi}$, a sequence $(\textbf{u}^k)_{k \ge 0}$ such that 
		$\textbf{u}^k \rightarrow \textbf{u}$, and a subsequence $(\textbf{v}^k)_{k \ge 0}$ of
		this one such that
		 $(f_{\textbf{G},\pi}(\textbf{v}^k))_k$ converges towards some $\textbf{v}$.
		For all $n \ge 1$ there exists some $k_n$ such that for all $k \ge k_n$ and all $l \le n$, $\textbf{u}^k_l = \textbf{u}_l$ and $f_{\textbf{G},\pi}(\textbf{u}^k_l) = \textbf{v}_l$. As a consequence for all $l \le n$, 
		$(\textbf{u}_l,\textbf{v}_l) \in E_l$. Since this is satisfied for all $n \ge 1$, we have $\textbf{v} = f_{\textbf{G},\pi}(\textbf{u})$. As a consequence every subsequence of $(f_{\textbf{G},\pi}(\textbf{u}^k))_k$ converges towards $f_{\textbf{G},\pi}(\textbf{u})$, so $(f_{\textbf{G},\pi}(\textbf{u}^k))_k$ converges towards $f_{\textbf{G},\pi}(\textbf{u})$. We proved that $f_{\textbf{G},\pi}$ is continuous. 
	\end{proof}
	
	\begin{definition}
		A \textbf{telescoping} of some $(\textbf{G},\pi)$ is a tuple $(\textbf{G}',\pi')$ such that 
		there exists a sequence of integers $\textbf{n}=(n_k)_{k \ge 0}$ with $G'_k = G_{n_k}$ and $\pi'_k = \pi_{n_k,n_{k+1}}$ for all $k \ge 0$.
	\end{definition}
	
	From this definition it is straightforward that for all $\textbf{u} \in V_{\textbf{G}',\pi'}$ there is a unique $\textbf{v}$ such that $(\textbf{u}_k,\textbf{v}_k)$ is an edge of $G'_k$ for all $k \ge 0$, provided that  uniqueness of edges holds for $V_{\textbf{G},\pi}$. The following is also obvious. 
	
	\begin{lemma}\label{lemma.telescoping}
		For each telescoping $(\textbf{G}',\pi')$ of $(\textbf{G},\pi)$, 
		$(V_{\textbf{G}',\pi'},f_{\textbf{G}',\pi'})$ is conjugated to $(V_{\textbf{G},\pi},f_{\textbf{G},\pi})$.
	\end{lemma}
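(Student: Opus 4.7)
The natural candidate for the conjugacy is the projection to the selected indices. The plan is to define $\Phi\colon V_{\textbf{G},\pi} \to V_{\textbf{G}',\pi'}$ by
\[\Phi\bigl((u_n)_{n\ge 1}\bigr) = (u_{n_k})_{k\ge 0},\]
verify that it lands in $V_{\textbf{G}',\pi'}$, is a homeomorphism, and intertwines $f_{\textbf{G},\pi}$ with $f_{\textbf{G}',\pi'}$.

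First I would check that $\Phi$ is well-defined. Compatibility with the bonding maps is immediate: for $\mathbf{u} \in V_{\textbf{G},\pi}$,
\[\pi'_k(u_{n_{k+1}}) = \pi_{n_k} \circ \cdots \circ \pi_{n_{k+1}-1}(u_{n_{k+1}}) = u_{n_k},\]
by iterating the relation $\pi_m(u_{m+1}) = u_m$. Continuity of $\Phi$ is automatic since each coordinate of $\Phi(\mathbf{u})$ is a coordinate of $\mathbf{u}$, and both spaces carry the Tychonoff topology.

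Next I would establish that $\Phi$ is a bijection. For injectivity, if $\Phi(\mathbf{u}) = \Phi(\mathbf{v})$ then $u_{n_k} = v_{n_k}$ for all $k$, and for arbitrary $n$ one picks $k$ with $n_k \ge n$ and applies $\pi_{n,n_k}$ to both sides to conclude $u_n = v_n$. For surjectivity, given $\mathbf{w} = (w_k)_{k\ge 0} \in V_{\textbf{G}',\pi'}$, define $u_n := \pi_{n,n_k}(w_k)$ for any $k$ with $n_k \ge n$; the compatibility $\pi'_k(w_{k+1}) = w_k$ guarantees this does not depend on the choice of $k$, and the resulting sequence satisfies $\pi_n(u_{n+1}) = u_n$, so it lies in $V_{\textbf{G},\pi}$ and maps to $\mathbf{w}$. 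Since $V_{\textbf{G},\pi}$ is compact and $V_{\textbf{G}',\pi'}$ is Hausdorff, the continuous bijection $\Phi$ is automatically a homeomorphism.

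Finally I would check the conjugacy equation $\Phi \circ f_{\textbf{G},\pi} = f_{\textbf{G}',\pi'} \circ \Phi$. If $\mathbf{v} = f_{\textbf{G},\pi}(\mathbf{u})$, then $(u_n, v_n) \in E_n$ for every $n$; in particular $(u_{n_k}, v_{n_k}) \in E_{n_k}$ for every $k$, which is exactly the defining condition saying that $\Phi(\mathbf{v})$ is the image of $\Phi(\mathbf{u})$ under $f_{\textbf{G}',\pi'}$, using the uniqueness of such a successor sequence that is inherited by any telescoping (as noted just before the lemma). This whole argument is essentially bookkeeping on inverse limits; I do not expect a real obstacle, the only subtle point is showing that well-definedness of the inverse of $\Phi$ relies on the compatibility condition built into $V_{\textbf{G}',\pi'}$, which furnishes the consistency needed to extend a subsequence back to the full sequence.
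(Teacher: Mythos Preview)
Your argument is correct and is exactly the standard verification one would give; the paper itself offers no proof beyond declaring the statement ``obvious,'' so you have simply supplied the details the authors omitted.
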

	
	\subsection{Aperiodic Cantor systems\label{section.aperiodic}}

Let $\mathcal{G}(\mathbb{U},f)= ((G_n)_{n \ge 1},(\pi_n)_{n \ge 1})$ 
	be a graph coverings representation of $f$ associated with a sequence of partitions $\mathbb{U}$.
	We assume that for all $n \ge 1$, if $(u,v) \in E_{n+1}$ and $(u,v') \in E_{n+1}$, 
	then $\pi_n (v) = \pi_n (v')$. Such a sequence of partitions always exists, since the space is zero-dimensional. 
	
	In the following, for every graph considered we will designate by \textbf{circuit} 
	any cycle in this graph which is minimal for the inclusion (considering a \textit{cycle} as a sequence of edges). The length of a circuit is the number 
	of edges it contains. 
	
	\begin{lemma}\label{lemma.periodic.points}
		A point $x \in X$ is periodic for $f$ with period $k \ge 1$ ($f^k(x)=x$)
		if and only if there exists some $m \ge 1$ and a sequence $(c_n)_{n \ge m}$ such that for all $n \ge m$, $c_n$ is a circuit of length $k$ in $G_n$, $\pi_n(c_{n+1}) = c_n$ and $\varphi(x)_n$ is a vertex in  $c_n$.
	\end{lemma}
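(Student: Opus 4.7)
The plan is to exploit the conjugacy $\varphi_{\mathbb{U},f}\colon (X,f) \to (V_{\mathbb{U},f}, \underline{f}_{\mathbb{U}})$ of Proposition~\ref{prop:conj}: under $\varphi$, the condition $f^k(x) = x$ becomes $\underline{f}_{\mathbb{U}}^k(\varphi(x)) = \varphi(x)$, so I would work entirely on the inverse-limit side and translate the final statement back via Proposition~\ref{prop:conj}. Throughout I read ``period $k$'' as meaning that $k$ is the minimal period of $x$, so that the orbit of $x$ consists of exactly $k$ distinct points; this is the interpretation for which the existence of a \emph{circuit} (simple cycle) of length $k$ through $\varphi(x)_n$ is the natural condition.

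For the forward direction, assume $x$ has minimal period $k$, so that $x, f(x), \ldots, f^{k-1}(x)$ are pairwise distinct. Since $\mesh(\mathcal{U}_n) \to 0$, I would choose $m$ large enough that for every $n \ge m$ these $k$ orbit points lie in $k$ pairwise distinct elements of $\mathcal{U}_n$. Writing $u_n^{(j)}$ for the element containing $f^j(x)$, the cycle $c_n\colon u_n^{(0)} \to u_n^{(1)} \to \cdots \to u_n^{(k-1)} \to u_n^{(0)}$ then has $k$ distinct vertices, so is a simple cycle and therefore a circuit of length $k$. Compatibility $\pi_n(c_{n+1}) = c_n$ is immediate from the refinement $u_{n+1}^{(j)} \subset u_n^{(j)}$, and by definition $\varphi(x)_n = u_n^{(0)}$ lies on $c_n$.

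For the reverse direction, given circuits $c_n$ of length $k$ for $n \ge m$ with $\pi_n(c_{n+1}) = c_n$ and $\varphi(x)_n$ a vertex of $c_n$, I would label the vertices of $c_n$ cyclically as $w_n^{(0)}, \ldots, w_n^{(k-1)}$ with $w_n^{(0)} = \varphi(x)_n$ and edges $(w_n^{(i)}, w_n^{(i+1 \bmod k)}) \in E_n$. Because $\pi_n$ is a graph morphism sending the simple $k$-cycle $c_{n+1}$ onto the simple $k$-cycle $c_n$, a counting argument shows that $\pi_n$ is a bijection between the two vertex sets and preserves cyclic order; anchoring at $\pi_n(w_{n+1}^{(0)}) = \pi_n(\varphi(x)_{n+1}) = \varphi(x)_n = w_n^{(0)}$ fixes the orientation and forces $\pi_n(w_{n+1}^{(i)}) = w_n^{(i)}$ for every $i$. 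I would then define $\textbf{w}^{(i)} \in V_{\mathbb{U},f}$ by setting its $n$-th coordinate to $w_n^{(i)}$ for $n \ge m$ and to $\pi_{n,m}(w_m^{(i)})$ for $n < m$. Since $(\textbf{w}^{(i)}_n, \textbf{w}^{(i+1 \bmod k)}_n) \in E_n$ for all $n$, the uniqueness part of Lemma~\ref{lemma.no.double.image} yields $\underline{f}_{\mathbb{U}}(\textbf{w}^{(i)}) = \textbf{w}^{(i+1 \bmod k)}$, so $\underline{f}_{\mathbb{U}}^k(\varphi(x)) = \underline{f}_{\mathbb{U}}^k(\textbf{w}^{(0)}) = \textbf{w}^{(0)} = \varphi(x)$, and Proposition~\ref{prop:conj} gives $f^k(x) = x$.

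The main obstacle is the cyclic-alignment step in the reverse direction: showing that $\pi_n$ restricts to a cyclic-order-preserving bijection between the vertex sets of $c_{n+1}$ and $c_n$. This combines three ingredients --- both circuits have length exactly $k$ (so the induced surjection is forced to be a bijection), $\pi_n$ is a graph morphism (so consecutive vertices of a cycle map to consecutive vertices), and the anchor $\pi_n(\varphi(x)_{n+1}) = \varphi(x)_n$ fixes the orientation of the bijection. Once this alignment is in place, the uniqueness part of Lemma~\ref{lemma.no.double.image} and the conjugacy of Proposition~\ref{prop:conj} close the argument without further difficulty.
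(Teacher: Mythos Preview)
Your proof is correct and follows the same outline as the paper's: forward, use $\mesh(\mathcal{U}_n)\to 0$ to separate the $k$ orbit points into a circuit at each level; backward, align the circuits $c_n$ coherently via $\pi_n$ and identify them with the $\underline{f}_{\mathbb{U}}$-orbit of $\varphi(x)$. The only difference is in this last identification: the paper invokes the standing hypothesis on the bonding maps (that two edges in $E_{n+1}$ out of the same vertex have equal $\pi_n$-target) to match $\varphi(f^l(x))_{n-1}$ with $\textbf{u}^l_{n-1}$, whereas you get the same conclusion directly from the uniqueness clause of Lemma~\ref{lemma.no.double.image}; you should also make explicit that the $\textbf{w}^{(i)}$ are pairwise distinct (their $m$-th coordinates are the $k$ distinct vertices of $c_m$), which gives minimality of the period~$k$ in the reverse direction.
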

	
	\begin{proof}
		\begin{itemize} 
			\item[$(\Rightarrow)$:] Let us consider some $x \in X$ such that
			$f^k (x) = x$.  Then Proposition~\ref{prop:conj} implies that $\underline{f}^k_{\mathbb{U}}(\varphi(x)) = \varphi(x)$. 
			Moreover there exists some $m \ge 1$ such that for all $n \ge m$, the points $f^l(x), l < k$, 
			belong to distinct elements of the partition $\mathcal{U}_{n}$ which form a cycle $c_{n}$ of length $k$ in $G_{n}$. We have directly that $c_{n+1}$ is mapped to $c_n$ by $\pi_n$. Moreover by definition $\varphi(x)_n$ is an element of $c_n$ for all $n \ge m$. 
			 If cycles $c_n$ are not minimal cycles, that is, they contain a cycle of smaller period, then it is not hard to see that the period of
			 $x$ is smaller than $k$, which is a contradiction.
			\item[$(\Leftarrow)$:] Reciprocally considering some sequence of circuits such as in the statement of the lemma, let us consider $\textbf{u}^0_n, ... \textbf{u}^{k}_n$ elements of the circuit $c_n$ 
			such that $\textbf{u}^0_n = \varphi(x)_n$, for all $l \le k-1$, $(\textbf{u}^l_n,\textbf{u}^{l+1}_n) \in E_n$ and $(\textbf{u}^k_n,\textbf{u}^{0}_n) \in E_n$. 
			Let $\textbf{w}=\varphi(f(x))$ and observe that $(\textbf{u}^0_n,\textbf{w}_n) \in E_n$ which implies (by hypothesis) that
			$$
			\textbf{u}^1_{n-1}=\pi_{n-1}(\textbf{u}^1_n)=\pi_{n-1}(\textbf{w}_n)=\textbf{w}_{n-1}.
			$$
			Therefore $\textbf{w}=\textbf{u}^1$ and by the same reasoning we have 
			that
			$\textbf{u}^{l+1} = \varphi(f^l(x))$ for all $l \le k$. 
			Since $\varphi(x)_n = \textbf{u}^k_n$ for all $n$, $\underline{f}^k_{\mathbb{U}} (\varphi(x)) = \varphi(x)$, which implies that $f^k(x)=x$. Since each $c_n$ is a circuit, the period of $x$ cannot be smaller than $k$.
		\end{itemize}
	\end{proof}
		
		\begin{notation}
			For all $n \ge 1$, let us denote by $\nu_n(\mathbb{U},f)$ the minimal length of a circuit
			in $G_n$. When the partition is clear from the context, we simply write $\nu_n(f)$.
		\end{notation}
		
	\begin{theorem}
	Assume that a sequence of partitions $\mathbb{U}$ satisfies the condition \eqref{eq:star}. Then the dynamical system $(X,f)$ is aperiodic if and only if $\nu_n(\mathbb{U},f) \rightarrow +\infty$.
	\end{theorem}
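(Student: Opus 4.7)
The plan is to prove both implications by contraposition, using Lemma~\ref{lemma.periodic.points} as the bridge between the existence of periodic points and the existence of coherent sequences of circuits of bounded length. The easy direction is immediate: if $f$ admits a periodic point $x$ of period $k$, Lemma~\ref{lemma.periodic.points} produces an integer $m$ and a sequence of circuits $c_n \subset G_n$ of length $k$ for every $n \ge m$, so $\nu_n(\mathbb{U},f) \le k$ for all $n \ge m$ and hence $\nu_n \not\to +\infty$.

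For the converse, I would argue contrapositively that if $\nu_n \not\to +\infty$ then $f$ has a periodic point. A preliminary observation is that $(\nu_n)_{n\ge 1}$ is non-decreasing. Indeed, the image under the graph morphism $\pi_n$ of any circuit of length $\ell$ in $G_{n+1}$ is a closed walk of length $\ell$ in $G_n$, and any closed walk in a finite directed graph contains a circuit of length at most $\ell$; applying this to a minimal-length circuit of $G_{n+1}$ gives $\nu_n \le \nu_{n+1}$. If $(\nu_n)$ does not tend to infinity, then being monotone and bounded it is eventually constant, equal to some $k \ge 1$ for every $n \ge N$.

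Let $C_n$ denote the (finite, non-empty for $n \ge N$) set of length-$k$ circuits in $G_n$. The key step is to verify that $\pi_n$ restricts to a well-defined map $C_{n+1} \to C_n$: the image of a length-$k$ circuit is a closed walk of length $k$, and if it visited some vertex twice, splitting the walk at that repetition would yield a strictly shorter circuit in $G_n$, contradicting the minimality $\nu_n = k$. Once this is established, the non-empty finite sets $(C_n)_{n\ge N}$ together with the bonding maps $\pi_n|_{C_{n+1}}$ form an inverse system of non-empty finite sets, whose inverse limit is non-empty by a standard compactness argument (König's lemma). Any element of this limit is a coherent sequence $(c_n)_{n \ge N}$ of length-$k$ circuits with $\pi_n(c_{n+1}) = c_n$, which by the reverse direction of Lemma~\ref{lemma.periodic.points} produces a periodic point of period $k$, contradicting aperiodicity.

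The main obstacle I expect is precisely the combinatorial step showing that projections of minimal circuits remain simple cycles of the same length. The monotonicity of $\nu_n$ and the König-type compactness argument are essentially routine; it is this no-collapse property, forced by the minimality of $\nu_n$, that couples the purely graph-theoretic invariant $\nu_n$ to the dynamical existence of a periodic orbit and makes Lemma~\ref{lemma.periodic.points} applicable.
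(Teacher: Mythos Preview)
Your proof is correct. The overall architecture matches the paper's: both argue the hard direction by contraposition, construct a coherent inverse sequence of short circuits by a compactness/K\"onig-type argument, and then invoke Lemma~\ref{lemma.periodic.points}. The organizational choices differ, however, and yours is cleaner. The paper does not observe that $(\nu_n)$ is non-decreasing; instead it reformulates the goal (via telescoping, Lemma~\ref{lemma.telescoping}) as the claim that for every $n$ some later level has all its circuits decomposing into at least two circuits of $G_n$, negates that, and then runs a compactness argument in a product of index sets for circuits of bounded length to extract the coherent sequence. Your monotonicity observation short-circuits this: once $(\nu_n)$ is eventually constant equal to $k$, the ``no-collapse'' step (a length-$k$ circuit must project to a length-$k$ circuit, else $\nu_n<k$) gives genuine bonding maps $C_{n+1}\to C_n$ between finite non-empty sets of minimal circuits, so K\"onig's lemma applies directly without any telescoping or auxiliary encoding. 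Both routes rest on the same idea, but yours isolates the combinatorial content more sharply and avoids the slightly awkward product-space extraction in the paper. One small point you gloss over (as does the paper): to apply the $(\Leftarrow)$ direction of Lemma~\ref{lemma.periodic.points} you must also extract a point $x$ with $\varphi(x)_n\in c_n$, but this is immediate since $\pi_n$ restricts to a bijection between the vertex sets of $c_{n+1}$ and $c_n$.
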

	
	\begin{proof}
		\begin{itemize}
			\item[$(\Rightarrow)$:] Using Lemma~\ref{lemma.telescoping}, it is sufficient to prove that for all $n$ there exists $m \ge n$ such that all the circuits
			in $G_{m+1}$ are mapped through $\pi_{n,m}$ to a concatenation of at least two circuits in $G_n$. Let us assume ad absurdum that there exists $n$ such that for all $m \ge n$, there exists a circuit in $G_m$ which is mapped injectively onto a circuit in $G_n$, i.e. its image is not a concatenation of two or more circuits in $G_n$.
			For this $n$ enumerate circuits in $G_n$, say $c_1^n,\ldots, c_{k_n}^n$ and let $r$ be the length of the longest of them.
			Then for all $m > n$ enumerate circuits in $G_m$ of length at most $r$, say $c_1^m,\ldots, c_{k_m}^m$.
			For each $m > n$ there exists a sequence $i_m^{m-1},\ldots,i_m^n$ such that for all $s \in \{n, \ldots, m-1\}$,  $\pi_{s,m-1}(c_{i_m^{m-1}})=c_{i_m^s}$.
			Denote by $x^{(m)} \in \Pi_{i=n}^\infty \{1,\ldots, k_i\}$ the sequence such that $(x^{(m)})_s=i_m^s$ for $s<m$ and $(x^{(m)})_s=1$ for $s \geq m$.
			By extracting a subsequence if necessary, we may assume that $(x^{(m)})$ converges to some sequence $x$.
			Then for every $s > n$ and $s' > s$ sufficiently large we have 
			$$
			\pi_{s-1}(c_{x_{s}})=\pi_{s-1}(\pi_{s,s'-1}(c_{x_{s'}}))=\pi_{s-1,s'-1}(c_{x_{s'}})=c_{x_{s-1}}.
			$$
			We constructed an inverse sequence of circuits, hence by Lemma~\ref{lemma.periodic.points}
			the system $(X,f)$ has a periodic point, which was assumed to be false.
			\item[$(\Leftarrow)$:] Reciprocally, if $\nu_n(f) \rightarrow +\infty$, it is clear that the conditions of Lemma~\ref{lemma.periodic.points} can not be satisfied for any sequence of 
			circuits $(c_n)$, thus $(X,f)$ has no periodic point.
		\end{itemize}
	\end{proof}
	
	\begin{remark}
		If $(X,f)$ is aperiodic then there is a subsequence of the sequence of partitions $\mathbb{U}$ such that in the associated telescoping $(\textbf{G}',\pi')$ of $(\textbf{G},\pi)$, the image $\pi_n'(c)$ of any circuit $c$ in $G'_{n+1}$ is a concatenation of at least two circuits in $G'_{n}$.
	\end{remark}
	
	\begin{remark}
		Similar to the graph coverings representation of Gambaudo and Martens \cite{Gambaudo-Martens}, we just proved a  
		characterization for aperiodicity. 
While in general we may not hope for a representation with all cycles intersecting at a unique vertex as in \cite{Gambaudo-Martens},
	we will show that some ``special'' vertices still exists, and some other useful properties can be required.
	\end{remark}
	
	\section{Purely attracting zero-dimensional systems\label{section.finite.orbits}}
	This section contains the main changes compared to the initial construction 
	of~\cite{BKO}.
	In order to prove the Theorem~\ref{theorem.main}, we will 
	consider graph coverings representations in the case of zero-dimensional systems 
	with only attracting finite orbits, which is 
	more complex than the minimal ones. In particular we will need to 
	define particular finite clopen partitions - that we call supercyclical - 
	which are adapted to the case considered, by discriminating two disjoint parts,
	one of which consists in a neighborhood of all the finite orbits whose length is smaller 
	than a certain integer - called attracted part. Moreover we will need to enrich the 
	graph coverings representation - exposed in Section~\ref{section.general} - 
	on sequences of refining supercyclical partitions with some markers which satisfy some 
	structural conditions. 
	
	We define attracting orbits in Section~\ref{section.attracting.orbits} and supercyclical 
	partitions in Section~\ref{section.supercyclical.partitions}. 
	We define then some operations on supercyclical partitions which act separately on the 
	attracted part and the other one - called supercyclical.
	In Section~\ref{section.attracted.operations} we deal with the attracted part. 
	In the following sections we deal with the supercyclical part: 
	in Section~\ref{section.krieger} we expose 
	the construction of Krieger markers in the supercyclical part and in 
	Section~\ref{section.marking} we 
	expose how to mark supercyclical partitions such that the set of markers forms an 
	acyclical cut of the corresponding graph using Krieger markers, in a way that 
	markers can not be mapped to 
	other markers by iteration of the graph morphisms. 
	Section~\ref{section.rectification} contains 
	a description of a procedure in order to ensure that in every partition of the sequence, 
	the divergent points - which have at least two outgoing edges in the graph - 
	coincide with markers. \bigskip 
	
	Before entering into the exposition of this part, let us make some remark on 
	the graph coverings representations of Cantor systems. It can happen that some 
	of the edges between vertices of a certain level $G_n$ are not 
	representing accurately the dynamical behavior of the system. Let us illustrate 
	this on an example: consider for instance the system which has a graph 
	coverings representation as on Figure~\ref{figure.example}. Although in the first level 
	$G_1$ the two circuits have nonempty intersection, one can see that the system still 
	consists in two disjoint periodic orbits. In fact one could separate the two circuits
	in the first level while the graph coverings would still represent the 
	same system. It is possible to construct a procedure that transforms a graph 
	coverings representation of a system into a similar representation of the same system 
	but without this phenomenon, which can be inconvenient in the proofs using 
	graph coverings (in particular for Theorem~\ref{theorem.main} in the present text). 
	We had to deal with a couple other problems of this type. Although it is in principle 
	possible to deal with them by working directly on a graph coverings representation obtained 
	with a simple sequence of refining clopen partitions, it appeared a lot more 
	efficient and simple to work on constructing a suitable sequence of partitions before 
	considering the graph coverings representation associated with it.

\begin{figure}[h!]
\begin{center}
\begin{tikzpicture}[scale=0.1]
\draw[fill=gray!98] (0,-4) circle (7pt);
\draw[fill=gray!98] (0,4) circle (7pt);
\draw[fill=gray!98] (-4,0) circle (7pt);
\draw[fill=gray!98] (4,0) circle (7pt);
\draw[fill=gray!98] (2.8,2.8) circle (7pt);
\draw[fill=gray!98] (2.8,-2.8) circle (7pt);
\draw[fill=gray!98] (-2.8,2.8) circle (7pt);
\draw[fill=gray!98] (-2.8,-2.8) circle (7pt);
\draw[-latex] (0,4) -- (-2.8,2.8);
\draw[-latex] (-2.8,2.8) -- (-4,0);
\draw[-latex] (2.8,2.8) -- (0,4);
\draw[-latex] (4,0) -- (2.8,2.8);
\draw[latex-] (4,0) -- (2.8,-2.8);
\draw[-latex] (0,-4) -- (2.8,-2.8);
\draw[latex-] (0,-4) -- (-2.8,-2.8);
\draw[-latex] (-4,0) -- (-2.8,-2.8);

\draw[-latex]  (15,-8) to[out=45,in=-45] (14,0);

\node[scale=0.9] at (21,-5) {$\pi_1$};
\node[scale=0.9] at (-17,0) {$G_1$};
\node[scale=0.9] at (-17,-12) {$G_2$};
\node[scale=0.9] at (-17,-24) {$G_3$};

\draw[-latex]  (-7,-8) to[out=135,in=-135] (-6,0);

\begin{scope}[yshift=-32cm]
\node at (4,0) {$\vdots$};
\end{scope}

\begin{scope}[xshift=8cm]
\draw[fill=gray!98] (0,-4) circle (7pt);
\draw[fill=gray!98] (0,4) circle (7pt);
\draw[fill=gray!98] (-4,0) circle (7pt);
\draw[fill=gray!98] (4,0) circle (7pt);
\draw[fill=gray!98] (2.8,2.8) circle (7pt);
\draw[fill=gray!98] (2.8,-2.8) circle (7pt);
\draw[fill=gray!98] (-2.8,2.8) circle (7pt);
\draw[fill=gray!98] (-2.8,-2.8) circle (7pt);
\draw[-latex] (0,4) -- (-2.8,2.8);
\draw[-latex] (-2.8,2.8) -- (-4,0);
\draw[-latex] (2.8,2.8) -- (0,4);
\draw[-latex] (4,0) -- (2.8,2.8);
\draw[latex-] (4,0) -- (2.8,-2.8);
\draw[-latex] (0,-4) -- (2.8,-2.8);
\draw[latex-] (0,-4) -- (-2.8,-2.8);
\draw[-latex] (-4,0) -- (-2.8,-2.8);
\end{scope}

\begin{scope}[yshift=-12cm]

\draw[-latex]  (15,-10) to[out=45,in=-45] (15,-2);
\node[scale=0.9] at (21,-7) {$\pi_2$};
\draw[-latex]  (-7,-10) to[out=135,in=-135] (-7,-2);

\begin{scope}[xshift=-2cm]
\draw[fill=gray!98] (0,-4) circle (7pt);
\draw[fill=gray!98] (0,4) circle (7pt);
\draw[fill=gray!98] (-4,0) circle (7pt);
\draw[fill=gray!98] (4,0) circle (7pt);
\draw[fill=gray!98] (2.8,2.8) circle (7pt);
\draw[fill=gray!98] (2.8,-2.8) circle (7pt);
\draw[fill=gray!98] (-2.8,2.8) circle (7pt);
\draw[fill=gray!98] (-2.8,-2.8) circle (7pt);
\draw[-latex] (0,4) -- (-2.8,2.8);
\draw[-latex] (-2.8,2.8) -- (-4,0);
\draw[-latex] (2.8,2.8) -- (0,4);
\draw[-latex] (4,0) -- (2.8,2.8);
\draw[latex-] (4,0) -- (2.8,-2.8);
\draw[-latex] (0,-4) -- (2.8,-2.8);
\draw[latex-] (0,-4) -- (-2.8,-2.8);
\draw[-latex] (-4,0) -- (-2.8,-2.8);
\end{scope}

\begin{scope}[xshift=10cm]
\draw[fill=gray!98] (0,-4) circle (7pt);
\draw[fill=gray!98] (0,4) circle (7pt);
\draw[fill=gray!98] (-4,0) circle (7pt);
\draw[fill=gray!98] (4,0) circle (7pt);
\draw[fill=gray!98] (2.8,2.8) circle (7pt);
\draw[fill=gray!98] (2.8,-2.8) circle (7pt);
\draw[fill=gray!98] (-2.8,2.8) circle (7pt);
\draw[fill=gray!98] (-2.8,-2.8) circle (7pt);
\draw[-latex] (0,4) -- (-2.8,2.8);
\draw[-latex] (-2.8,2.8) -- (-4,0);
\draw[-latex] (2.8,2.8) -- (0,4);
\draw[-latex] (4,0) -- (2.8,2.8);
\draw[latex-] (4,0) -- (2.8,-2.8);
\draw[-latex] (0,-4) -- (2.8,-2.8);
\draw[latex-] (0,-4) -- (-2.8,-2.8);
\draw[-latex] (-4,0) -- (-2.8,-2.8);
\end{scope}
\end{scope}

\begin{scope}[yshift=-24cm]
\begin{scope}[xshift=-2cm]
\draw[fill=gray!98] (0,-4) circle (7pt);
\draw[fill=gray!98] (0,4) circle (7pt);
\draw[fill=gray!98] (-4,0) circle (7pt);
\draw[fill=gray!98] (4,0) circle (7pt);
\draw[fill=gray!98] (2.8,2.8) circle (7pt);
\draw[fill=gray!98] (2.8,-2.8) circle (7pt);
\draw[fill=gray!98] (-2.8,2.8) circle (7pt);
\draw[fill=gray!98] (-2.8,-2.8) circle (7pt);
\draw[-latex] (0,4) -- (-2.8,2.8);
\draw[-latex] (-2.8,2.8) -- (-4,0);
\draw[-latex] (2.8,2.8) -- (0,4);
\draw[-latex] (4,0) -- (2.8,2.8);
\draw[latex-] (4,0) -- (2.8,-2.8);
\draw[-latex] (0,-4) -- (2.8,-2.8);
\draw[latex-] (0,-4) -- (-2.8,-2.8);
\draw[-latex] (-4,0) -- (-2.8,-2.8);
\end{scope}

\begin{scope}[xshift=10cm]
\draw[fill=gray!98] (0,-4) circle (7pt);
\draw[fill=gray!98] (0,4) circle (7pt);
\draw[fill=gray!98] (-4,0) circle (7pt);
\draw[fill=gray!98] (4,0) circle (7pt);
\draw[fill=gray!98] (2.8,2.8) circle (7pt);
\draw[fill=gray!98] (2.8,-2.8) circle (7pt);
\draw[fill=gray!98] (-2.8,2.8) circle (7pt);
\draw[fill=gray!98] (-2.8,-2.8) circle (7pt);
\draw[-latex] (0,4) -- (-2.8,2.8);
\draw[-latex] (-2.8,2.8) -- (-4,0);
\draw[-latex] (2.8,2.8) -- (0,4);
\draw[-latex] (4,0) -- (2.8,2.8);
\draw[latex-] (4,0) -- (2.8,-2.8);
\draw[-latex] (0,-4) -- (2.8,-2.8);
\draw[latex-] (0,-4) -- (-2.8,-2.8);
\draw[-latex] (-4,0) -- (-2.8,-2.8);
\end{scope}
\end{scope}
\end{tikzpicture}
\end{center}
\caption{In this representation, the two periodic orbits of the 
systems are not distinguished in the first graph.\label{figure.example}}
\end{figure}
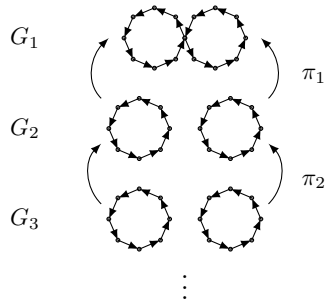
	
	\subsection{Attracting orbits\label{section.attracting.orbits}}

	\begin{definition}\label{definition.attracting.orbit}
		We say that a subset $u \subset X$ is \textbf{stable} (by $f$) when $f(u)\subset u$.
		A finite orbit $p$ of the system $(X,f)$ is called \textbf{attracting} whenever there exists a stable clopen set $u$ containing $p$ such that 
		\[\displaystyle{\bigcap_{n \ge 0} f^n (u)} = p.\] We also say that $u$ is \textbf{attracted} by $p$ or that $u$ is an \textbf{attracting neighborhood} of $p$.
	\end{definition}
	
	Note that if $p$ is attracting, then for every open set $v$ containing $p$ there exists 
	some integer $n \ge 1$ such that $f^n(u)\subset v$. It is also straightforward that 
	$f^n(u)$ is stable for every $n \ge 1$. 
	In the following, whenever considering a finite orbit $p$, we will 
	denote by $|p|$ its cardinality. The following is straightforward:

	\begin{lemma}\label{lemma.disjointness}
	Let us consider two distinct (and thus disjoint) finite orbits $p$ and $p'$ 
	and two stable clopen sets $u,u'$ which are respectively attracted by $p,p'$.
	The sets $u$ and $u'$ are disjoint.
	\end{lemma}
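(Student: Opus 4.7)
The plan is to argue by contradiction. Suppose $u \cap u' \neq \emptyset$. I will derive a contradiction by combining the fact that the intersection is itself a stable clopen set with the attracting property of both orbits.

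First I would observe that $u \cap u'$ is stable: since $f(u) \subset u$ and $f(u') \subset u'$, we have $f(u \cap u') \subset f(u) \cap f(u') \subset u \cap u'$. In particular, the sequence $(f^n(u \cap u'))_{n \ge 0}$ is decreasing, because any $y = f^{n+1}(x) = f^n(f(x))$ with $x \in u \cap u'$ satisfies $f(x) \in u \cap u'$ by stability, so $y \in f^n(u \cap u')$. Moreover, each $f^n(u \cap u')$ is a non-empty compact subset of $X$ (the continuous image of the non-empty compact set $u \cap u'$).

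Next I would use the attracting hypothesis on both orbits. Since $u \cap u' \subset u$, we get $f^n(u \cap u') \subset f^n(u)$ for every $n$, and hence
\[
\bigcap_{n \ge 0} f^n(u \cap u') \;\subset\; \bigcap_{n \ge 0} f^n(u) \;=\; p.
\]
By the symmetric argument with $u'$, the same intersection is also contained in $p'$. Since $p$ and $p'$ are distinct finite orbits they are disjoint, and therefore $\bigcap_{n \ge 0} f^n(u \cap u') \subset p \cap p' = \emptyset$.

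The contradiction then comes from compactness: a decreasing sequence of non-empty closed subsets of the compact space $X$ has non-empty intersection, so the previous computation forces $u \cap u' = \emptyset$. The argument is routine; the only subtle point to check is the stability of $u \cap u'$, which is what makes the decreasing-sequence argument go through, since without stability one cannot identify $\bigcap_n f^n(u \cap u')$ with a nested intersection of non-empty closed sets.
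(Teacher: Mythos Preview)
Your argument is correct: the intersection $u\cap u'$ is stable, hence the sets $f^n(u\cap u')$ form a decreasing sequence of non-empty compacta whose intersection must on the one hand be non-empty and on the other hand lie in $p\cap p'=\emptyset$. The paper does not actually write out a proof---it simply declares the lemma ``straightforward''---so your reasoning is exactly the kind of routine verification the authors had in mind.
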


\begin{notation}
	In the present and following sections we consider a zero-dimensional 
	dynamical system $(X,f)$ whose finite orbits are all attracting. We also assume that $X$ is a Cantor set. For simplicity 
	we will designate such a system as \textbf{purely attracting} one.
\end{notation}

	\begin{lemma}\label{lemma.all.attracting}
		For all $n \ge 1$, a purely attracting system $(X,f)$ has only finitely many periodic orbits of period $n$.
	\end{lemma}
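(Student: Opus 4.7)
My plan is to argue by contradiction using compactness together with the defining property of an attracting orbit. Suppose for some $n\ge 1$ there were infinitely many pairwise distinct periodic orbits $(O_k)_{k\ge 1}$ of period $n$, and pick one representative $x_k \in O_k$ for every $k$. By compactness of $X$ I would extract a subsequence (still denoted $(x_k)$) converging to some $x^\ast \in X$. Since $f$ is continuous and $f^n(x_k)=x_k$, passing to the limit gives $f^n(x^\ast)=x^\ast$, so $p^\ast := \orb(x^\ast)$ is a finite orbit.

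By the purely attracting hypothesis, $p^\ast$ admits a stable clopen attracting neighborhood $u$, i.e.\ $f(u)\subset u$ and $\bigcap_{m\ge 0}f^m(u)=p^\ast$. Since $x^\ast\in u$ and $u$ is open, $x_k\in u$ for all sufficiently large $k$. Stability of $u$ propagates this to the full orbit: $f^i(x_k)\in f^i(u)\subset u$ for every $i\ge 0$, so $O_k\subset u$ once $k$ is large enough.

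The crucial observation is that each $O_k$ is $f$-invariant (not merely forward-stable): because $O_k$ is a finite orbit, $f(O_k)=O_k$, and hence $O_k=f^m(O_k)\subset f^m(u)$ for every $m\ge 0$. Consequently $O_k\subset\bigcap_{m\ge 0}f^m(u)=p^\ast$ for all large $k$. As $p^\ast$ is itself a single finite orbit, it contains no orbit other than itself, so $O_k=p^\ast$ for all large $k$, contradicting the assumption that the orbits $O_k$ are pairwise distinct.

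No step looks like a genuine obstacle; the argument is essentially a compactness-plus-attractor exercise. The only point one has to be careful about is to exploit the full $f$-invariance $f(O_k)=O_k$ (so that $O_k$ lands inside \textbf{every} iterate $f^m(u)$, not just inside $u$), which is what allows the defining intersection of the attracting neighborhood to be invoked and thereby force $O_k$ to coincide with $p^\ast$.
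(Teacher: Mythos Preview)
Your proof is correct and follows essentially the same approach as the paper: extract a limit periodic orbit by compactness, then use the attracting hypothesis to reach a contradiction. The paper's version is terser---it phrases the limiting process via Hausdorff convergence of the orbits and simply asserts that convergence of distinct periodic orbits to an attracting one ``is not possible''---whereas you spell out explicitly, via the invariance $f(O_k)=O_k$ and the intersection $\bigcap_{m}f^m(u)=p^\ast$, exactly why this fails.
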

	
	\begin{proof}
		Since $X$ is compact, any infinite sequence of 
		periodic orbits of period $n$ has a subsequence 
		which converges, relatively to Hausdorff distance, to a finite orbit $p$ whose period is a divisor of $n$. Since this finite orbit is attracting, this is not possible.
	\end{proof}
	
	\begin{remark}
		Lemma~\ref{lemma.all.attracting} can be seen as a consequence of
		the statement \cite[Lemma~12]{ciesielski} that locally radially shrinking systems have a finite number of finite orbits of length $n$ for all integers $n \ge 1$. 
	\end{remark}
	
	\subsection{Supercyclical partitions\label{section.supercyclical.partitions}}

	\begin{definition}\label{definition.supercyclical}
	
	A \textbf{supercyclical partition} of 
	the system $(X,f)$ is a pair $(\mathcal{U},n)$, where $\mathcal{U}$ is 
	a finite clopen partition of $X$ and $n \ge 1$ is an integer, 
	such that there exists a sequence $(\leftidx{^p}{\mathcal{U}})_{|p|\le n}$ 
	of subsets of $\mathcal{U}$, where the indexes are periodic 
	orbits, such that for every 
	periodic orbit $p$ for which $|p| \le n$: 
	\begin{enumerate}
			\item[(S1)] $\mathcal{E}(\leftidx{^p}{\mathcal{U}})$ is stable, 
	contains $p$ and is attracted by it.
		\end{enumerate}	
	For a finite clopen partition $\mathcal{U}$, the largest integer $n$ (or infinity if there is no upper bound) such that $(\mathcal{U},n)$ is a supercyclical 
	partition is called the \textbf{supercyclical order} of $\mathcal{U}$, and is
	denoted by $o(\mathcal{U})$. By convention, if there is no $n \ge 1$ such that 
	$(\mathcal{U},n)$ is supercyclical, we set $o(\mathcal{U})=0$. Furthermore, 
	by extension, we say that a finite clopen partition is supercyclical when 
	$o(\mathcal{U}) \ge 1$.
	\end{definition}
	
	\begin{remark}
	This definition should not introduce any confusion, as we specify each time 
	if the supercyclical partition is attached with an integer $n$ or not.
	\end{remark}
	
	\begin{remark}
		Observe that if $(X,f)$ has only finitely periodic orbits whose periods are not larger than an integer $l$
		and  $l \le o(\mathcal{U})$ then  $o(\mathcal{U})=+\infty$.
	\end{remark}	
	
	Let us note that in Definition~\ref{definition.supercyclical}, as 
	a consequence of Lemma~\ref{lemma.disjointness}, for two different periodic 
	orbits $p$ and $p'$, the sets $\mathcal{E}(\leftidx{^p}{\mathcal{U}})$ and $\mathcal{E}(\leftidx{^{p'}}{\mathcal{U}})$ are disjoint. The following proposition 
	shows furthermore that there is a canonical choice for the sets $\leftidx{^p}{\mathcal{U}}$:

	\begin{proposition}\label{proposition.canonical}
	Let us consider $\mathcal{U}$ a supercyclical partition of the system $(X,f)$, 
	and $p$ a finite orbit such that $|p|\le o(\mathcal{U})$. The set of 
	subsets $U$ of $\mathcal{U}$ such that $\mathcal{E}(U)$ is attracted by $p$ 
	admits a maximum for the inclusion relation.
	\end{proposition}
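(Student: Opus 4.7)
The plan is to show that the collection $\mathcal{A}$ of subsets $U \subseteq \mathcal{U}$ such that $\mathcal{E}(U)$ is attracted by $p$ is closed under pairwise unions. Since $\mathcal{U}$ is finite, $\mathcal{A}$ is finite as well, so the union $U^{\ast} = \bigcup_{U \in \mathcal{A}} U$ (formed over all members of $\mathcal{A}$) will itself lie in $\mathcal{A}$ by iterated application of this closure property, and will be the desired maximum for inclusion. Nonemptiness of $\mathcal{A}$ is automatic: the assumption $|p| \le o(\mathcal{U})$ together with Definition~\ref{definition.supercyclical} gives $\leftidx{^p}{\mathcal{U}} \in \mathcal{A}$.

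It remains to check that if $U_1, U_2 \in \mathcal{A}$ then $U_1 \cup U_2 \in \mathcal{A}$. Setting $V_i = \mathcal{E}(U_i)$ and $W = V_1 \cup V_2 = \mathcal{E}(U_1 \cup U_2)$, the clopenness of $W$, the inclusion $p \subseteq W$, and stability $f(W) = f(V_1) \cup f(V_2) \subseteq V_1 \cup V_2 = W$ are immediate. The substantive point is the identity $\bigcap_{n \ge 0} f^n(W) = p$. The inclusion $\supseteq$ is clear, since $p \subseteq W$ and $f(p) = p$.

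For the reverse inclusion, I would fix an arbitrary open neighborhood $\Omega$ of $p$ and use the remark following Definition~\ref{definition.attracting.orbit}: for each $i \in \{1,2\}$ there exists $N_i$ with $f^n(V_i) \subseteq \Omega$ for all $n \ge N_i$. Then with $N = \max(N_1, N_2)$ one gets $f^n(W) = f^n(V_1) \cup f^n(V_2) \subseteq \Omega$ for every $n \ge N$, hence $\bigcap_{n \ge 0} f^n(W) \subseteq \Omega$. Since $p$ is a finite, hence closed, subset of the compact Hausdorff space $X$, it coincides with the intersection of its open neighborhoods, and therefore $\bigcap_{n \ge 0} f^n(W) \subseteq p$.

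The only nontrivial ingredient is this last step, where one converts the two attraction hypotheses into a single uniform statement by taking the maximum of two waiting times. Everything else—clopenness, stability, containment of $p$, and the passage from pairwise unions to the full union of $\mathcal{A}$—is formal and exploits only the finiteness of $\mathcal{U}$.
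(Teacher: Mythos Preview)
Your proof is correct and follows the same approach as the paper, which simply writes ``it is sufficient to consider the union of all these sets $U$.'' You have spelled out the verification that this union is again attracted by $p$ (via closure under pairwise unions and finiteness of $\mathcal{U}$), which the paper leaves implicit.
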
 
	
	\begin{proof}
	Indeed, it is sufficient to consider the union of all these sets $U$.
	\end{proof}

	\begin{notation}
	We will denote $\leftidx{_{*}^p}{\mathcal{U}}$ the maximum provided by Proposition~\ref{proposition.canonical}. The collection of the sets $\leftidx{_{*}^p}{\mathcal{U}}$ for $|p| \le n$ makes $(\mathcal{U},n)$ a  
	supercyclical partition. We will call \textbf{attracted part} of $(\mathcal{U},n)$ 
	the union of the sets $\leftidx{_{*}^p}{\mathcal{U}}$ for $|p| \le n$. The remainder of the 
	partition is called its \textbf{supercyclical part}. For simplicity we will also call 
	- without introducing ambiguity - attracted part and supercyclical 
	part the images by $\mathcal{E}$ of these respective sets. For the same reason, 
	for a finite clopen partition $\mathcal{U}$, we will call supercyclical part and attracted part of $\mathcal{U}$ respectively the supercyclical part and attracted 
	part of $(\mathcal{U},o(\mathcal{U}))$.
	\end{notation} 

\begin{remark}
The attachement of an integer $n$ to the definition of supercyclical partition 
will be used in the following in order to specify what we consider in the context to be the attracted part of the supercyclical partition. The purpose of doing so is to 
prevent modifications on the supercyclical part to affect the attracted part.
\end{remark}

	\begin{remark}
	When $f$ is surjective, since $X$ is without isolated points, the supercyclical part 
	of every supercyclical partition $\mathcal{U}$ is non-empty. Moreover if $(X,f)$ is aperiodic 
	then every clopen partition $\mathcal{U}$ of $X$ is supercyclical, $o(\mathcal{U}) = +\infty$ and the attracted part $\mathcal{U}$ is empty.
	\end{remark}

	The following is straightforward: 
	
	\begin{lemma}\label{lemma.supercyclical.stability}
	Let us consider $\mathcal{U}$ a supercyclical partition for $(X,f)$  and 
	$\mathcal{V}$ another finite clopen partition such that $\mathcal{V} \prec \mathcal{U}$. 
	Then the partition $\mathcal{V}$ is supercyclical and its order is at least 
	$o(\mathcal{U})$. Furtheremore for all $p$ such that $|p| \le o(\mathcal{U})$, 
	we have that $\mathcal{E}(\leftidx{_*^p}{\mathcal{U}}) \subset \mathcal{E}(\leftidx{_*^p}{\mathcal{V}})$.
	\end{lemma}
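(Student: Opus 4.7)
The plan is to lift the witness subsets from $\mathcal{U}$ to $\mathcal{V}$ verbatim. For every finite orbit $p$ with $|p| \le o(\mathcal{U})$, fix witness subsets $\leftidx{^p}{\mathcal{U}} \subset \mathcal{U}$ as in Definition~\ref{definition.supercyclical}, and define
\[\leftidx{^p}{\mathcal{V}} := \{v \in \mathcal{V} : v \subset \mathcal{E}(\leftidx{^p}{\mathcal{U}})\}.\]
Since $\mathcal{V} \prec \mathcal{U}$, every element of $\mathcal{V}$ is contained in a unique element of $\mathcal{U}$, so the clopen set $\mathcal{E}(\leftidx{^p}{\mathcal{U}})$ decomposes as the disjoint union of the cells of $\mathcal{V}$ that it contains; that is, $\mathcal{E}(\leftidx{^p}{\mathcal{V}}) = \mathcal{E}(\leftidx{^p}{\mathcal{U}})$. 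The three conditions in (S1)---being stable under $f$, containing $p$, and being attracted by $p$---depend only on the underlying clopen set and not on how it is partitioned, so they transfer immediately. This shows that $(\mathcal{V},o(\mathcal{U}))$ is a supercyclical partition, whence $o(\mathcal{V}) \ge o(\mathcal{U})$.

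For the inclusion $\mathcal{E}(\leftidx{_*^p}{\mathcal{U}}) \subset \mathcal{E}(\leftidx{_*^p}{\mathcal{V}})$, I would apply the same lifting to the canonical witness provided by Proposition~\ref{proposition.canonical}. Let $W := \{v \in \mathcal{V} : v \subset \mathcal{E}(\leftidx{_*^p}{\mathcal{U}})\}$. By the same argument as above, $\mathcal{E}(W) = \mathcal{E}(\leftidx{_*^p}{\mathcal{U}})$, which by definition of $\leftidx{_*^p}{\mathcal{U}}$ is attracted by $p$. Hence $W$ lies in the family of subsets of $\mathcal{V}$ whose maximum (for inclusion) defines $\leftidx{_*^p}{\mathcal{V}}$. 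Maximality yields $W \subset \leftidx{_*^p}{\mathcal{V}}$, and therefore $\mathcal{E}(\leftidx{_*^p}{\mathcal{U}}) = \mathcal{E}(W) \subset \mathcal{E}(\leftidx{_*^p}{\mathcal{V}})$, as required.

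There is no substantive obstacle: the argument is purely formal, resting on the single observation that a refinement $\mathcal{V} \prec \mathcal{U}$ lets any clopen set built from $\mathcal{U}$-cells be rewritten as a disjoint union of $\mathcal{V}$-cells, so that the dynamical conditions in (S1), which are stated on the level of the underlying clopen set rather than on the partition structure, are preserved under refinement.
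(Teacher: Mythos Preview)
Your proof is correct and is precisely the straightforward verification the paper has in mind; the paper states the lemma without proof, calling it ``straightforward,'' and your lifting of witness subsets along the refinement is the natural way to unpack that.
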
 	
	
	\begin{lemma} \label{lemma.augmenting.order} 
	Let us consider $\mathcal{U}$ a supercyclical partition for $(X,f)$ and assume that $o(\mathcal{U})<+\infty$.
	There exists another supercyclical partition 
	$\mathcal{V}$ of order at least $o(\mathcal{U})+1$ which refines $\mathcal{U}$.
	\end{lemma}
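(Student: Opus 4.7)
The plan is to set $n = o(\mathcal{U})$ and refine $\mathcal{U}$ just enough to accommodate the finitely many periodic orbits of cardinality exactly $n+1$. By Lemma~\ref{lemma.all.attracting} there are only finitely many such orbits $p_1, \ldots, p_k$ (with $k \ge 0$), and since $(X,f)$ is purely attracting, each $p_i$ admits a clopen attracting neighborhood $w_i$. Lemma~\ref{lemma.disjointness} then yields that the family consisting of the $w_i$'s and the sets $\mathcal{E}(\leftidx{_*^p}{\mathcal{U}})$ for $|p| \le n$ is pairwise disjoint, since each of these is an attracting neighborhood of a distinct finite orbit.

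I would then take
\[
\mathcal{V} \;=\; \mathcal{U} \,\vee\, \{w_1, \ldots, w_k,\; X \setminus (w_1 \cup \cdots \cup w_k)\},
\]
which is a finite clopen partition refining $\mathcal{U}$ such that every $w_i$ and every $\mathcal{E}(\leftidx{_*^p}{\mathcal{U}})$ (for $|p| \le n$) is a union of elements of $\mathcal{V}$. To check that $(\mathcal{V}, n+1)$ is supercyclical I would exhibit the subsets explicitly: for $|p| \le n$ set $\leftidx{^p}{\mathcal{V}} = \{v \in \mathcal{V} : v \subset \mathcal{E}(\leftidx{_*^p}{\mathcal{U}})\}$, so that $\mathcal{E}(\leftidx{^p}{\mathcal{V}}) = \mathcal{E}(\leftidx{_*^p}{\mathcal{U}})$ inherits stability and the attracting property from $\mathcal{U}$; for $p = p_i$ set $\leftidx{^{p_i}}{\mathcal{V}} = \{v \in \mathcal{V} : v \subset w_i\}$, so that $\mathcal{E}(\leftidx{^{p_i}}{\mathcal{V}}) = w_i$ is stable, contains $p_i$, and is attracted by $p_i$ by construction.

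The two substantive ingredients are Lemma~\ref{lemma.all.attracting}, which keeps the refinement finite, and Lemma~\ref{lemma.disjointness}, which ensures that the newly inserted attracting neighborhoods of the cardinality-$(n+1)$ orbits do not collide with the already controlled attracted parts of orbits of cardinality $\le n$. Everything else is bookkeeping, so I do not anticipate a real obstacle; the boundary case $o(\mathcal{U}) = 0$ is handled uniformly since the $|p| \le n$ branch of the construction is then vacuous and one only needs to isolate the finitely many fixed points.
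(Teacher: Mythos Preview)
Your argument is correct and follows essentially the same route as the paper: isolate the finitely many periodic orbits of the next period, choose clopen attracting neighborhoods for them, and refine $\mathcal{U}$ so that each such neighborhood becomes a union of partition elements. The only cosmetic difference is that the paper performs the refinement via the fixed sequence $(\mathcal{U}^0_m)$ and explicitly keeps the attracted part of $\mathcal{U}$ untouched, while you use a direct common refinement; also note that $k\ge 1$ is forced (if there were no orbit of cardinality exactly $o(\mathcal{U})+1$ then $(\mathcal{U},o(\mathcal{U})+1)$ would already be supercyclical), and the case $o(\mathcal{U})=0$ cannot occur since the hypothesis that $\mathcal{U}$ is supercyclical means $o(\mathcal{U})\ge 1$.
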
 
	
	\begin{proof}
	Every finite orbit $p$ such that $|p|\ge o(\mathcal{U})+1$ is contained in the supercyclical part of $\mathcal{U}$. Since  $o(\mathcal{U})<+\infty$ there exists at least one such orbit $p$. Let $l \geq o(\mathcal{U})+1$ be the smallest possible
	period of these orbits. The supercyclical part of $\mathcal{U}$ is a clopen set, hence for every periodic orbit $p$ of period $l$ there exists a clopen attracted neighborhood $u_p$ of $p$ which is contained in the supercyclical part of 
	$\mathcal{U}$. 
	There exists $m>0$ 
	such that each of the sets $u_p$ is a union of elements of $\mathcal{U}_m^0$ (this sequence was fixed 
	at the beginning of Section~\ref{section.graph.coverings}), and the 
	same holds for the complement, in the supercyclical part of $\mathcal{U}$,
	of the union of these sets. We then 
	define $\mathcal{V}$ by collecting the following elements: the elements of $\mathcal{U}$ 
	contained in the attracted part of $\mathcal{U}$ and the elements of $\mathcal{U}_m^0$ contained in the 
	supercyclical part of $\mathcal{U}$. For every finite orbit $p$ with $|p| \le o(\mathcal{U})$, 
	we consider $\leftidx{^p}{\mathcal{V}} = \leftidx{_*^p}{\mathcal{U}}$ 
	and when $|p|=l$, 
	$\leftidx{^p}{\mathcal{V}}$ consists of the sets
	$u \in \mathcal{U}^0_m$ such that $u \subset u_p$. Since 
	these sets sum up to $u_p$ which is attracted to $p$, the sequence $(\leftidx{^p}{\mathcal{V}})_{|p|\le l}$
	makes the partition $\mathcal{V}$ supercyclical of order at least $o(\mathcal{U})+1$.
	\end{proof}

	Let us consider $\mathcal{U}$ a supercyclical partition for $(X,f)$.
	For all finite orbit $p$ such that $|p| \le o(\mathcal{U})$, 
	the subgraph of $G(\mathcal{U})$ which corresponds to $\leftidx{_*^p}{\mathcal{U}}$ 
does not contain edges from a vertex in this part 
	to another vertex outside of it since $\leftidx{_*^p}{\mathcal{U}}$ is a stable set (see Figure~\ref{figure.almost.acyclic} for an illustration). 
While $\mathcal{E}(\leftidx{_*^p}{\mathcal{U}})$ contains a unique periodic orbit, the graph associated with $\leftidx{_*^p}{\mathcal{U}}$ may contain other circuits than the one corresponding 
to the finite orbit, although they do not represent finite orbits but are 
traces of infinite orbits. We will thus refine the 
sets $\leftidx{_*^p}{\mathcal{U}}$, as partitions of the respective sets $\mathcal{E}(\leftidx{_*^p}{\mathcal{U}})$, so that the associated graph contains a unique circuit.

	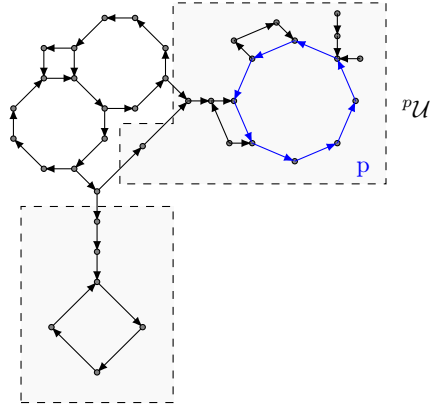
\begin{figure}[h!]
		\begin{center}
			\begin{tikzpicture}[scale=0.2]
			\draw[dashed,fill=gray!5]
			(-11.5,-5.5) -- (-11.5,-1.5) -- (-8,-1.5) -- (-8,6.5) -- (6,6.5) -- (6,-5.5) 
			-- (-11.5,-5.5);
			\draw[fill=gray!98] (0,-4) circle (5pt);
			\draw[fill=gray!98] (0,4) circle (5pt);
			\draw[fill=gray!98] (-4,0) circle (5pt);
			\draw[fill=gray!98] (4,0) circle (5pt);
			\draw[fill=gray!98] (2.8,2.8) circle (5pt);
			\draw[fill=gray!98] (2.8,-2.8) circle (5pt);
			\draw[fill=gray!98] (-2.8,2.8) circle (5pt);
			\draw[fill=gray!98] (-2.8,-2.8) circle (5pt);
			\draw[fill=gray!98] (-4.3,-2.8) circle (5pt);
			\draw[fill=gray!98] (-5.5,0) circle (5pt);
			\draw[fill=gray!98] (-7,0) circle (5pt);
			\draw[fill=gray!98] (4.3,2.8) circle (5pt);
			\draw[fill=gray!98] (2.8,4.3) circle (5pt);
			\draw[fill=gray!98] (2.8,5.8) circle (5pt);
			\draw[fill=gray!98] (-4,4) circle (5pt);
			\draw[fill=gray!98] (-1.2,5.2) circle (5pt);
			
			\draw[-latex] (-2.8,2.8) -- (-4,4);
			\draw[-latex] (-4,4) -- (-1.2,5.2);
			\draw[-latex] (-1.2,5.2) -- (0,4);

			\draw[-latex,color=blue] (0,4) -- (-2.8,2.8);
			\draw[-latex,color=blue] (-2.8,2.8) -- (-4,0);
			\draw[-latex,color=blue] (2.8,2.8) -- (0,4);
			\draw[-latex,color=blue] (4,0) -- (2.8,2.8);
			\draw[latex-,color=blue] (4,0) -- (2.8,-2.8);
			\draw[-latex,color=blue] (0,-4) -- (2.8,-2.8);
			\draw[latex-,color=blue] (0,-4) -- (-2.8,-2.8);
			\draw[-latex,color=blue] (-4,0) -- (-2.8,-2.8);
			\node[color=blue,scale=0.9] at (4.5,-4.5) {p};
			\node[scale=0.9] at (8,-0.5) {$\leftidx{^p}{\mathcal{U}}$};
			
			\draw[-latex] (-4.3,-2.8) -- (-5.5,0);
			\draw[-latex] (-4.3,-2.8) -- (-2.8,-2.8);
			\draw[-latex] (-5.5,0) -- (-4,0);
			\draw[latex-] (-5.5,0) -- (-7,0);
			
			\draw[dashed,fill=gray!5] (-18,-20) rectangle (-8,-7);
			
			\draw[-latex] (-10,-3) -- (-7,0);
			\draw[-latex] (-13,-6) -- (-10,-3);
			\draw[-latex] (-13,-6) -- (-13,-8);
			\draw[-latex] (-13,-8) -- (-13,-10);
			\draw[-latex] (-13,-10) -- (-13,-12);
			\draw[-latex] (-13,-12) -- (-10,-15);
			\draw[-latex] (-10,-15) -- (-13,-18);
			\draw[-latex] (-13,-18) -- (-16,-15);
			\draw[-latex] (-16,-15) -- (-13,-12);
			\draw[-latex] (-14.5,-4.5) -- (-13,-6);
			\draw[fill=gray!98] (-10,-3) circle (5pt);
			\draw[fill=gray!98] (-13,-6) circle (5pt);
			\draw[fill=gray!98] (-13,-8) circle (5pt);
			\draw[fill=gray!98] (-13,-10) circle (5pt);
			\draw[fill=gray!98] (-13,-12) circle (5pt);
			\draw[fill=gray!98] (-16,-15) circle (5pt);
			\draw[fill=gray!98] (-13,-18) circle (5pt);
			\draw[fill=gray!98] (-10,-15) circle (5pt);
			
			\draw[-latex] (-8.5,1.5) -- (-7,0);
			\draw[-latex] (-8.5,1.5) -- (-8.5,3.5);
			\draw[-latex] (-8.5,3.5) -- (-10.5,5.5);
			\draw[-latex] (-10.5,5.5) -- (-12.5,5.5);
			\draw[-latex] (-12.5,5.5) -- (-14.5,3.5);
			\draw[-latex] (-14.5,3.5) -- (-16.5,3.5);
			\draw[-latex] (-16.5,3.5) -- (-16.5,1.5);
			\draw[-latex] (-16.5,1.5) -- (-14.5,1.5);
			\draw[-latex] (-14.5,3.5) -- (-14.5,1.5);
			\draw[-latex] (-14.5,1.5) -- (-12.5,-0.5);
			\draw[-latex] (-12.5,-0.5) -- (-10.5,-0.5);
			\draw[-latex] (-10.5,-0.5) -- (-8.5,1.5);
			\draw[-latex] (-18.5,-0.5) -- (-16.5,1.5);
			\draw[-latex] (-18.5,-2.5) -- (-18.5,-0.5);
			\draw[-latex] (-16.5,-4.5) -- (-18.5,-2.5);
			\draw[-latex] (-14.5,-4.5) -- (-16.5,-4.5);
			\draw[-latex] (-12.5,-2.5) -- (-14.5,-4.5);
			\draw[-latex] (-12.5,-0.5) -- (-12.5,-2.5);
			
			\draw[fill=gray!98] (-12.5,-2.5) circle (5pt);
			\draw[fill=gray!98] (-14.5,-4.5) circle (5pt);
			\draw[fill=gray!98] (-16.5,-4.5) circle (5pt);
			\draw[fill=gray!98] (-18.5,-2.5) circle (5pt);
			\draw[fill=gray!98] (-18.5,-0.5) circle (5pt);
			\draw[fill=gray!98] (-8.5,3.5) circle (5pt);
			\draw[fill=gray!98] (-10.5,5.5) circle (5pt);
			\draw[fill=gray!98] (-12.5,5.5) circle (5pt);
			\draw[fill=gray!98] (-14.5,3.5) circle (5pt);
			\draw[fill=gray!98] (-14.5,1.5) circle (5pt);
			\draw[fill=gray!98] (-12.5,-0.5) circle (5pt);
			\draw[fill=gray!98] (-10.5,-0.5) circle (5pt);
			\draw[fill=gray!98] (-8.5,1.5) circle (5pt);
			\draw[fill=gray!98] (-16.5,1.5) circle (5pt);
			\draw[fill=gray!98] (-16.5,3.5) circle (5pt);
			
			\draw[-latex] (2.8,4.3) -- (2.8,2.8);
			\draw[-latex] (4.3,2.8) -- (2.8,2.8);
			\draw[latex-] (2.8,4.3) -- (2.8,5.8);
			\end{tikzpicture}
		\end{center}
		\caption{Illustration of the graph $G(\mathcal{U})$ for a supercyclical 
		partition $\mathcal{U}$ of $(X,f)$. The dashed regions
		correspond to the attracted part of the partition, the remainder
		corresponds to the supercyclical part. }\label{figure.almost.acyclic}
	\end{figure}

	\subsection{Refinement of the attracted part\label{section.attracted.operations}}
	
	In the following, we will need to have the following additional properties on the graph 
	of each constructed supercyclical partition: 
	\begin{enumerate}
	\item[(S2)] 
	each of the subgraphs of the attracted part corresponding to some $\leftidx{_*^p}{\mathcal{U}}$
	contains a unique circuit.
	\item[(S3)] 
	in the attracted part there 
	is not divergent vertex (a vertex with at least two outgoing 
edges).
	\end{enumerate}
	 We would like to refine supercyclical partitions in order to remove 
	these vertices, as illustrated on Figure~\ref{figure.suppression}. As a byproduct, we will also remove ''artificial'' circuits.
	This is the purpose 
	of the present section.
	
	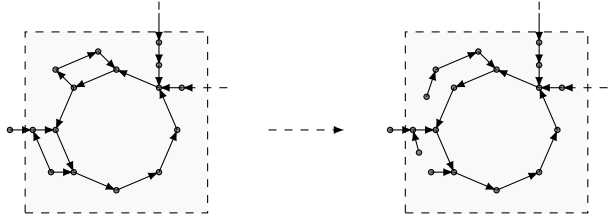
\begin{figure}[h!]
		\begin{center}
			\begin{tikzpicture}[scale=0.2]
			\draw[dashed,fill=gray!5] (-6,-5.5) rectangle (6,6.5);
			\draw[fill=gray!98] (0,-4) circle (5pt);
			\draw[fill=gray!98] (0,4) circle (5pt);
			\draw[fill=gray!98] (-4,0) circle (5pt);
			\draw[fill=gray!98] (4,0) circle (5pt);
			\draw[fill=gray!98] (2.8,2.8) circle (5pt);
			\draw[fill=gray!98] (2.8,-2.8) circle (5pt);
			\draw[fill=gray!98] (-2.8,2.8) circle (5pt);
			\draw[fill=gray!98] (-2.8,-2.8) circle (5pt);
			\draw[fill=gray!98] (-4.3,-2.8) circle (5pt);
			\draw[fill=gray!98] (-5.5,0) circle (5pt);
			\draw[fill=gray!98] (-7,0) circle (5pt);
			\draw[fill=gray!98] (4.3,2.8) circle (5pt);
			\draw[fill=gray!98] (2.8,4.3) circle (5pt);
			\draw[fill=gray!98] (2.8,5.8) circle (5pt);
			\draw[-latex] (2.8,7.75) -- (2.8,5.8);
			\draw[dashed] (2.8,8.5) -- (2.8,7.75);
			\draw[dashed,-latex] (7.3,2.8) -- (4.3,2.8);
			\draw[-latex] (0,4) -- (-2.8,2.8);
			\draw[-latex] (-2.8,2.8) -- (-4,0);
			\draw[-latex] (2.8,2.8) -- (0,4);
			\draw[-latex] (4,0) -- (2.8,2.8);
			\draw[latex-] (4,0) -- (2.8,-2.8);
			\draw[-latex] (0,-4) -- (2.8,-2.8);
			\draw[latex-] (0,-4) -- (-2.8,-2.8);
			\draw[-latex] (-4,0) -- (-2.8,-2.8);
			
			\draw[fill=gray!98] (-4,4) circle (5pt);
			\draw[fill=gray!98] (-1.2,5.2) circle (5pt);
			
			\draw[-latex] (-2.8,2.8) -- (-4,4);
			\draw[-latex] (-4,4) -- (-1.2,5.2);
			\draw[-latex] (-1.2,5.2) -- (0,4);

			\draw[-latex] (-4.3,-2.8) -- (-5.5,0);
			\draw[-latex] (-4.3,-2.8) -- (-2.8,-2.8);
			\draw[-latex] (-5.5,0) -- (-4,0);
			\draw[latex-] (-5.5,0) -- (-7,0);

			\draw[-latex] (2.8,4.3) -- (2.8,2.8);
			\draw[-latex] (4.3,2.8) -- (2.8,2.8);
			\draw[latex-] (2.8,4.3) -- (2.8,5.8);

			\draw[dashed,-latex] (10,0) -- (15,0);			
			
			\begin{scope}[xshift=25cm]
			
			\draw[dashed,fill=gray!5] (-6,-5.5) rectangle (6,6.5);
			\draw[fill=gray!98] (0,-4) circle (5pt);
			\draw[fill=gray!98] (0,4) circle (5pt);
			\draw[fill=gray!98] (-4,0) circle (5pt);
			\draw[fill=gray!98] (4,0) circle (5pt);
			\draw[fill=gray!98] (2.8,2.8) circle (5pt);
			\draw[fill=gray!98] (2.8,-2.8) circle (5pt);
			\draw[fill=gray!98] (-2.8,2.8) circle (5pt);
			\draw[fill=gray!98] (-2.8,-2.8) circle (5pt);
			\draw[fill=gray!98] (-4.3,-2.8) circle (5pt);
			\draw[fill=gray!98] (-5.5,0) circle (5pt);
			\draw[fill=gray!98] (-7,0) circle (5pt);
			\draw[fill=gray!98] (4.3,2.8) circle (5pt);
			\draw[fill=gray!98] (2.8,4.3) circle (5pt);
			\draw[fill=gray!98] (2.8,5.8) circle (5pt);
			\draw[-latex] (2.8,7.75) -- (2.8,5.8);
			\draw[dashed] (2.8,8.5) -- (2.8,7.75);
			\draw[dashed,-latex] (7.3,2.8) -- (4.3,2.8);
			\draw[-latex] (0,4) -- (-2.8,2.8);
			\draw[-latex] (-2.8,2.8) -- (-4,0);
			\draw[-latex] (2.8,2.8) -- (0,4);
			\draw[-latex] (4,0) -- (2.8,2.8);
			\draw[latex-] (4,0) -- (2.8,-2.8);
			\draw[-latex] (0,-4) -- (2.8,-2.8);
			\draw[latex-] (0,-4) -- (-2.8,-2.8);
			\draw[-latex] (-4,0) -- (-2.8,-2.8);
			
			\draw[-latex] (-4.3,-2.8) -- (-2.8,-2.8);
			\draw[-latex] (-5.5,0) -- (-4,0);
			\draw[latex-] (-5.5,0) -- (-7,0);
			
			\draw[-latex] (-5.1,-1.5) -- (-5.5,0);
			
			\draw[fill=gray!98] (-4,4) circle (5pt);
			\draw[fill=gray!98] (-1.2,5.2) circle (5pt);
			\draw[fill=gray!98] (-4.6,2.2) circle (5pt);
			\draw[fill=gray!98] (-5.1,-1.5) circle (5pt);
			
			\draw[-latex] (-4.6,2.2) -- (-4,4);
			\draw[-latex] (-4,4) -- (-1.2,5.2);
			\draw[-latex] (-1.2,5.2) -- (0,4);
			
			\draw[-latex] (2.8,4.3) -- (2.8,2.8);
			\draw[-latex] (4.3,2.8) -- (2.8,2.8);
			\draw[latex-] (2.8,4.3) -- (2.8,5.8);
			\end{scope}
			\end{tikzpicture}
		\end{center}
		\caption{Removing the divergent vertices in the attracted part.}\label{figure.suppression}
	\end{figure} \bigskip
	
	Let us consider $\mathcal{U}$ a supercyclical partition. In this section we define
	a sequence $(\kappa_n (\mathcal{U}))_{n \ge 0}$ of supercyclical partitions 
	such that for all $n$, $\kappa_n(\mathcal{U})$ has the same order as $\mathcal{U}$ 
	and such that:
	\begin{enumerate}[(i)]
	\item for all $n \ge 0$, $\kappa_{n+1}(\mathcal{U}) 
	\prec \kappa_{n}(\mathcal{U})$;
	\item $\kappa_0(\mathcal{U}) = \mathcal{U}$; 
	\item for all $n \ge 1$ the supercyclical part of 
	$\kappa_n(\mathcal{U})$ is identical to the one of $\mathcal{U}$. 
	\end{enumerate}
We then prove 
some properties of these partitions related to the graph representation. 
	
	Let us recall that we fixed  a sequence of 
	clopen partition $(\mathcal{U}_n^0)_{n \ge 1}$ which satisfies the condition \eqref{eq:star}. 
	Let us define the sequence ($\kappa_n(\mathcal{U}))_{n \ge 1}$ 
	by separately defining the supercyclical part and the attracted 
	part. For all $n \ge 1$, the partition $\kappa_n(\mathcal{U})$ has the following elements: 
	the ones of $\mathcal{U}$ that are in its supercyclical part; and the elements of 
	some sets $\mathcal{H}_n(p)$ defined below, for $|p| \le o(\mathcal{U})$, such that 
	for all $p$, the elements of $\mathcal{H}_n(p)$ form a partition of 
	$\mathcal{E}(\leftidx{_*^p}{\mathcal{U}})$. \bigskip 
	
	Let us define the sequence $(\mathcal{H}_n(p))_{n \ge 1}$ recursively 
	for all $p$ such that $|p| \le o(\mathcal{U})$. The principle of the definition 
	is to ``\textit{track back}'' how the points in 
	$\mathcal{E}(\leftidx{_*^p}{\mathcal{U}})$ approach the orbit $p$. 
	Let us fix one periodic orbit $p=\{p_1,\ldots,p_{k}\}$, $k \le o(\mathcal{U})$.
	Assume standard ordering on the orbit, that is $p_l=f^{l-1}(p_1)$ for all $l \le k$. Let $n_0$ be the smallest integer $s$ such that: 
	\begin{enumerate}
	\item there exists a partition 
	of $\mathcal{E}(\leftidx{_*^p}{\mathcal{U}})$ with elements of $\mathcal{U}^0_s$;
	\item this partition refines 
	$\leftidx{_*^p}{\mathcal{U}}$ as a partition of $\mathcal{E}(\leftidx{_*^p}{\mathcal{U}})$;
	\end{enumerate}
	
	Before going further, let us 
	prove that there exist $\hat{u}_1$, $\hat{u}_2$, ... , $\hat{u}_k$ disjoint 
	clopen sets such that for all $l \le k$,
	$\hat{u}_l$ contains $p_l$, and  
	$f(\hat{u}_l) \subset \hat{u}_{l+1}$, 
	where for convenience we set $\hat{u}_{k+1} := \hat{u}_1$.
	Since $p_1$ is periodic orbit, there is an open set $u\ni p_1$ such that $f^i(\overline{u})\cap f^j(\overline{u})=\emptyset$ for $i\neq j$.
	By the well-known property of attracting sets (e.g. see Proposition V.15 in \cite{BC}), there is an open set $w \ni p_1$ such that $\overline{w}\subset u$
	and $f^k(\overline{w})\subset w$. Taking a finite cover of $\overline{w}$ by clopen sets of sufficiently small diameter, we obtain a
	clopen set $\hat{u}_1\ni p_1$ such that $\overline{w}\subset \hat{u}_1\subset u$ and $f^k(\hat{u}_1)\subset w \subset \hat{u}_1$.
	Since $f(\hat{u}_1)$ is closed and $\hat{u}_1$ is open, we can find again a sufficiently small clopen neighborhood $\hat{u}_{2} \supset f(\hat{u}_1)$
	such that $f^{k-1} (\hat{u}_2) \subset \hat{u}_1$. In the same way we 
	find clopen sets $\hat{u}_3, \ldots, \hat{u}_k$. By construction we have 
	that $f(\hat{u}_l) \subset \hat{u}_{l+1}$ for all $l$. Furthermore the 
	disjointness of these sets derives from  $f^i(\overline{u})\cap f^j(\overline{u})=\emptyset$ for $i\neq j$.
	\bigskip 
	
	For all $n \ge 0$ we denote by $\mathcal{H}_n (p)$ the partition of 
	$\mathcal{E}(\leftidx{_*^p}{\mathcal{U}})$ defined as:
	\[\mathcal{H}_n (p) = \bigcup_{m \ge 0} S^{(n)}_m(p),\]
	where the sequence $(S^{(n)}_m(p))_{m \ge 0}$ is constructed as follows: 
	
	\begin{enumerate}[(i)]
		\item consider $w^n$ some clopen set contained in $\displaystyle{\bigcup_{l \le k}} \hat{u}_l$ and which consists in some union of elements 
		of $\mathcal{U}_{n+n_0}$ and minimal such that $w^n$ is stable under $f$;
		\item define $S^{(n)}_0(p)= \{w^n_1, ... , w^n_k\}$, where $w^n_l = w^n \cap \hat{u}_l$;
		\item $S^{(n)}_1(p) = \{ (v \cap f^{-1}(w_l^n))\backslash w^n \ : \ 1 \le l \le k, \ v \in \mathcal{U}^0_{n+n_0}, \ v \subset \mathcal{E}(\leftidx{^p}{\mathcal{U}})\}$;
		\item for all $m \ge 1$, $S^{(n)}_{m+1}(p) = \{v \cap f^{-1}(w) \ : \ w \in S^{(n)}_m(p), v \in \mathcal{U}^0_{n+n_0}, \ v \subset \mathcal{E}(\leftidx{^p}{\mathcal{U}})\}$.
	\end{enumerate}
	
	In words the points in the union of elements of $S^{(n)}_m(p)$ are the ones 
	that arrive in some $w_l^n$ after $m$ iterations of $f$. This idea is similar to 
	the one used in the construction of Gambaudo-Martens 
	representation~\cite{Gambaudo-Martens}, although resulting graphs are quite different. \bigskip
	
	It is straightforward to see that the elements of $\mathcal{H}_n(p)$ are disjoint and edges starting in $S^{(n)}_{m+1}(p)$
	have to end in $S^{(n)}_m(p)$ for all $m\geq 0$. 
	Moreover, for each $x \in \mathcal{E}(\leftidx{_*^p}{\mathcal{U}})$, there exists 
	some $m$ such that $f^m (x) \in w^n$. As a consequence $\mathcal{H}_n(p)$ covers 
	$\mathcal{E}(\leftidx{_*^p}{\mathcal{U}})$, which is compact, meaning that 
	only finitely many sets $S^{(n)}_m(p)$ are not empty. Thus $\kappa_n(\mathcal{U})$ 
	is indeed a finite partition of $X$.
	Schematic illustration of next Lemma can be found on the right part of Figure~\ref{figure.suppression}.
It shows that our construction indeed leads to conditions (S2) and (S3).		
	
\begin{remark}
In order to obtain (S2) and (S3) and get rid of divergent vertices, it is enough to use only $\kappa_1(\mathcal{U})$. However we present the construction with $\kappa_n$
since it may be of independent interest for further research, since it allows to refine attracted part without changing supercyclic part.
\end{remark}
	
	\begin{lemma}\label{lemma.graph.supercyclical}
	Consider a supercyclical partiton $\mathcal{U}$. For all $n \ge 1$, the restriction 
	of the graph $G(\kappa_n(\mathcal{U}))$ to any $\leftidx{_*^p}{\kappa_n(\mathcal{U})}$ 
	with $|p| \le o(\mathcal{U})$ consists in a unique circuit, together with a finite 
	number of paths whose intersection with the circuit is reduced to the 
	endpoint of this path and if two such paths coincide at some point, 
	they coincide until their endpoint.
	\end{lemma}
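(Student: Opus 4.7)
The plan is to establish that the restriction of $G(\kappa_n(\mathcal{U}))$ to $\leftidx{_*^p}{\kappa_n(\mathcal{U})}$ is a \emph{functional} directed graph, meaning every vertex has exactly one outgoing edge. Together with the stratification of the vertex set by the sets $S^{(n)}_m(p)$, this will force the graph to be a single cycle with in-trees attached, which is precisely the structure described in the statement.

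First I would check uniqueness of outgoing edges. For a vertex $u \in S^{(n)}_{m+1}(p)$, the construction presents $u$ as $v \cap f^{-1}(w)$ (with an additional subtraction of $w^n$ in the case $m=0$) for some $v \in \mathcal{U}^0_{n+n_0}$ and $w \in S^{(n)}_m(p)$, so $f(u) \subset w$. Since $w$ is an element of the partition $\kappa_n(\mathcal{U})$ and every image $f(x)$ for $x \in u$ lies in $w$, the only outgoing edge of $u$ in $G(\kappa_n(\mathcal{U}))$ points to $w$. For $u = w_l^n \in S^{(n)}_0(p)$, stability of $w^n$ combined with $f(\hat{u}_l) \subset \hat{u}_{l+1}$ gives $f(w_l^n) \subset w^n \cap \hat{u}_{l+1} = w_{l+1}^n$ (indices mod $k$), so the unique outgoing edge is $w_l^n \to w_{l+1}^n$.

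Next I would pin down the unique circuit. The vertices $w_1^n, \ldots, w_k^n$ are pairwise distinct because each contains a distinct point $p_l$ of the orbit $p$, and the previous step shows they form a cycle of length $k$; since each $w_l^n$ has a unique outgoing edge, this cycle is minimal and is therefore a circuit. To rule out any other circuit, I introduce a height function $h(u) = m$ for $u \in S^{(n)}_m(p)$. This is well-defined because the $S^{(n)}_m(p)$ are pairwise disjoint: for $u \in S^{(n)}_m(p)$ with $m \ge 1$ one shows inductively that $f^i(u) \subset S^{(n)}_{m-i}(p)$ and $f^i(u) \cap w^n = \emptyset$ for $i < m$, using the ``$\setminus w^n$'' in the definition of $S^{(n)}_1(p)$ together with the stability of $w^n$. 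The unique outgoing edge at a vertex of height $m \ge 1$ lands at height $m-1$, so along any walk the height strictly decreases until it reaches $0$; hence any circuit must lie entirely in $S^{(n)}_0(p)$ and coincides with the cycle above.

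The path structure is then automatic: from each vertex of height $m \ge 1$, iterating the unique outgoing edge $m$ times produces a simple path ending at a circuit vertex, and if two such paths share a vertex $u$ the uniqueness of the outgoing edge at each subsequent vertex forces them to coincide from $u$ all the way to their common endpoint on the circuit. The main technical obstacle I foresee is the careful verification of disjointness of the strata $S^{(n)}_m(p)$, since the unique-out-edge argument hinges on $\kappa_n(\mathcal{U})$ being a genuine partition; once this bookkeeping is done, the lemma reduces to elementary properties of functional graphs on a finite vertex set.
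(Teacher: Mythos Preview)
Your proposal is correct and follows essentially the same approach as the paper: both arguments rest on the stratification by the sets $S^{(n)}_m(p)$, the observation that $S^{(n)}_0(p)$ forms the unique circuit, and the fact that each vertex at level $m\ge 1$ has a unique outgoing edge landing at level $m-1$. Your version is more explicit (the functional-graph framing, the height function, and the disjointness check), but the underlying logic is the same as the paper's terse proof, which relies on the same facts stated just before the lemma.
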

		
	\begin{proof}
	Let us fix a finite orbit $p$ such that $|p| \le o(\mathcal{U})$. 
	It is straightforward that the elements of $S^{(n)}_0(p)$ form a circuit in 
	the graph $G(\kappa_n(\mathcal{U}))$. Let us consider some $u \in \mathcal{H}_n(p)$ 
	which is not in this circuit. By construction there exists $m \ge 1$ such that 
	$u \in S^{(n)}_m(p)$. There is a sequence of vertices $(u_l)_{0 \le l \le m}$
	such that for all $l$, $u_l \in S^{(n)}_{m-l}$ and for $l \le m-1$, 
	$(u_l,u_{l+1})$ is an edge of the graph $G(\kappa_n(\mathcal{U}))$. 
	By definition, each element of some $S^{(n)}_m$, $m \ge 1$ 
	is connected to a unique element of $S^{(n)}_{m-1}$. This implies the lemma.
	\end{proof}

	\subsection{Markers compatible with supercyclical partitions\label{section.krieger}}
	
In the following, we will use an adaptation of Krieger's notion of markers in order to mark partitions.
	
\begin{definition}
Let us consider $\mathcal{U}$ a supercyclical partition for $(X,f)$ and  $1\le n \le o(\mathcal{U})$, and let us denote by
$\mathcal{S}$ the supercyclical part of $\mathcal{U}$. A \textbf{$(n,t,N)$-marker} for $\mathcal{U}$ 
is clopen set $F \subset \E(\mathcal{S})$ such that
\begin{enumerate}
\item the set $F$ is $(n+1)$-separated, meaning that the sets $F, f^{-1}(F),\ldots  , f^{-n}(F)$ are pairwise disjoint;
\item the sets $F, f^{-1}(F), \ldots , f^{-N}(F)$ cover $f^{-t}(\mathcal{\E(S)})$.
\end{enumerate}
\end{definition}
	
	\begin{theorem}\label{theorem.krieger}
	Fix $n \ge 1$  and let us consider $\mathcal{U}$ a supercyclical partition of order at least $n$ for 
	$(X,f)$. There exists some integer $m \ge 1$ such that $\mathcal{U}$ admits 
	an $(n,(n+1)m+n,2n+1)$-marker.
	\end{theorem}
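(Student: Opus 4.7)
My plan is to adapt Krieger's classical marker lemma to the present setting. Write $Y := \E(\mathcal{S})$ for the supercyclical part of $\mathcal{U}$; this is clopen. The attracted part of $\mathcal{U}$ is stable under $f$ (being the disjoint union of the forward-invariant attracting neighborhoods $\E(\leftidx{_*^p}{\mathcal{U}})$), so $Y$ is backward invariant, $f^{-1}(Y)\subset Y$. Since $o(\mathcal{U})\ge n$, every periodic orbit of period at most $n$ lies in the attracted part, hence outside $Y$.

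Setting $Y_k := f^{-k}(Y)$, each $Y_k$ is clopen, contained in $Y$, and nested by $Y_{k}\supset Y_{k+1}$. For each $x\in Y_n$ the points $x, f(x),\ldots, f^n(x)$ lie in $Y$ and are pairwise distinct: a coincidence $f^i(x)=f^j(x)$ with $i<j\le n$ would produce a periodic point in $Y$ of period at most $n$, a contradiction. By continuity and zero-dimensionality, $x$ admits a clopen neighborhood $V_x\subset Y$ with $V_x, f(V_x),\ldots, f^n(V_x)$ pairwise disjoint, and compactness of $Y_n$ yields a finite subcover $V_{x_1},\ldots, V_{x_k}$.

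I then construct $F$ greedily. Writing $T(A) := \bigcup_{j=1}^{n}\bigl(f^j(A)\cup f^{-j}(A)\bigr)$, set $F_0 := \emptyset$ and inductively $F_i := F_{i-1}\cup\bigl(V_{x_i}\setminus T(F_{i-1})\bigr)$. Using the symmetry $T(A)\cap B=\emptyset \Leftrightarrow A\cap T(B)=\emptyset$, each $F_i$ is a clopen, $(n+1)$-separated subset of $Y$. The final marker $F := F_k$ satisfies $Y_n\subset F\cup T(F)$: every $x\in Y_n$ either has $f^i(x)\in F$ for some $i\in\{0,1,\ldots,n\}$ (the ``good case'', which is a forward hit in the required window), or satisfies $x=f^j(y)$ for some $y\in F$ and $j\in\{1,\ldots,n\}$ (the ``bad case'').

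The technical heart of the proof, which fixes the integer $m$, is to show that when $x\in f^{-t}(Y)$ with $t=(n+1)m+n$, the good case necessarily occurs within $\{0,1,\ldots,2n+1\}$. The idea is to iterate the dichotomy along the shifted points $f^{(n+1)s}(x)\in Y_n$, for $s=0,1,\ldots,m$: a good case at $s=0$ or $s=1$ immediately yields a window hit (after translating back to $x$), while a chain of consecutive bad cases produces a sequence of marker points $y_s\in F$ whose forward orbits all merge into the forward orbit of $x$. Combining the $(n+1)$-separation of $F$ with the absence of periodic orbits of period $\le n$ in $Y$, one argues that such a cascade cannot continue through all $m$ stages without forcing a short cycle in $Y$, a contradiction; the integer $m$ is chosen larger than the resulting combinatorial bound. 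The main obstacle I anticipate is the bookkeeping in this iteration --- in particular, converting the backward bad-case relation $x=f^j(y)$ into a genuine forward hit $f^i(x)\in F$ with $i\le 2n+1$, for which the $(n+1)$-separation of $F$ and the long lifetime of the forward orbit of $x$ in $Y$ must be combined with care.
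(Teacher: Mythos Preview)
Your setup and the greedy construction of $F$ are essentially the paper's: a finite $(n+1)$-separated clopen cover of the supercyclical part, followed by the standard Krieger union $F_i=F_{i-1}\cup\bigl(V_{x_i}\setminus T(F_{i-1})\bigr)$ with $T(A)=\bigcup_{1\le j\le n}(f^j(A)\cup f^{-j}(A))$. Your $(n+1)$-separation argument is also the standard one and matches the paper.

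The gap is in the covering step. You correctly isolate the dichotomy for $z\in Y_n$: either $f^i(z)\in F$ for some $0\le i\le n$ (good), or $z=f^j(y)$ for some $y\in F$ and $1\le j\le n$ (bad). You also correctly note that a good case at $s=0$ or $s=1$ lands in the window $\{0,\ldots,2n+1\}$. But your plan for the case where both $s=0$ and $s=1$ are bad --- iterate through $s=2,\ldots,m$ and argue that a chain of bad cases ``forces a short cycle in $Y$'' --- does not work. A bad case at stage $s$ only says that some $y_s\in F$ has $f^{j_s}(y_s)=f^{(n+1)s}(x)$, i.e.\ a preimage branch through $y_s$ merges into the forward orbit of $x$. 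For non-invertible $f$ these $y_s$ can sit on completely unrelated branches; neither the $(n+1)$-separation of $F$ nor the absence of periodic points of period $\le n$ produces any relation among them, and no cycle is forced. Moreover, a good case at some $s\ge 2$ only gives a hit at time $\ge 2(n+1)$, outside the required window, so iterating does not help even if it eventually succeeds.

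The paper's proof sidesteps this entirely with a single shift: it applies the dichotomy once to $f^n(x)$ rather than to $x$. Then the good case gives $f^{n+i}(x)\in F$ with $n+i\in\{n,\ldots,2n\}$, while the bad case $f^n(x)\in f^{j}(F)$ is read as $f^{n-j}(x)\in F$ with $n-j\in\{0,\ldots,n-1\}$. One pass suffices, no iteration or combinatorial bound is needed, and $m$ is simply taken to be the size of the finite cover. (This last cancellation step tacitly uses invertibility of $f$; for genuinely non-invertible $f$ the bad case is a real obstruction and your instinct that something extra is required is not wrong, but the cascade argument you sketch does not supply it.)
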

		
	\begin{proof}
	The proof is very similar to the one presented in~\cite{Downarowicz} for 
	aperiodic Cantor systems. We adapt it here to a purely attracting system, 
	with appropriate changes to our slightly more general setting. 
		
	For each $x$ in the supercyclical part of $\mathcal{U}$ there exists 
	$u_x$ clopen neighborhood of $x$ which is included in the 
	supercyclical part and which is $(n+1)$-separated. Since the attracted part of $\mathcal{U}$
	is stable by $f$, for all $k \le n$, $f^{-k}(u_x)$ is included in the supercyclical part.
	The clopen sets $u_x$ for $x$ in the supercyclical part form an open cover of this part, 
	which is a compact set. As a consequence there is a finite subcover of it with these 
	clopen sets. Let us denote $u_j$, $j \in \{1,\ldots,m\}$ the elements of this 
	subcover. The sets $u'_j = f^{-(n+1)m}(u_j)$ are all $(n+1)$-separated, and 
	they cover $f^{-(n+1)m} (\mathcal{\E(\mathcal{S})})$. \bigskip 
	
	Let us define $F_1 = u'_1$ and recursively: 
	\[F_{j+1} := F_j \bigcup \left(u'_{j+1} \backslash \bigcup_{-(n+1) < i < (n+1)} f^{-i}(F_j)\right).\]
	Finally set $F = F_m$. Let us prove that $F$ is a $(n,(n+1)m+n,2n+1)$-marker: 
	\begin{enumerate}
	\item \textbf{The set $F$ is $(n+1)$-separated.} Let us prove inductively that each $F_j$ is 
	$(n+1)$-separated. This is immediate for $F_1$. Assume the property is verified for $F_{j-1}$ and 
	that, ad absurdum, that $F_j$ is not $(n+1)$-separated. As a direct consequence, there exist 
	two $i',i''$ such that $0 \le i' < i'' < n+1$ such that 
	$f^{-i'}(F_j) \bigcap f^{-i''}(F_j) \neq \emptyset$. Thus we have 
	$F_j \bigcap f^{-i}(F_j) \neq \emptyset$ for some $0 < i < n+1$, and therefore there exists $x$ such that 
	$x \in F_j$ and $f^i(x) \in F_j$. We have, by definition of $F_j$, that $F_j \subset F_{j-1}
	\cup u'_{j}$. Since both of these sets are $(n+1)$-separated, the two points $x$ and 
	$f^i(x)$ belong to
	different of them. The point which belongs to $u'_j$ also belongs to 
	\[\displaystyle{\bigcup_{-(n+1) < i < (n+1)}} f^{-i}(F_{j-1}),\]
	substracted from $u'_j$ when defining $F_j$. So that point does not belong to $F_j$, 
	a contradiction.
	\item \textbf{Every point of $f^{-(n+1)m-n}(\E(\mathcal{S}))$ visits $F$ at least once in $2n+1$ iterations.} 
	Consider a point $x$ in the set $f^{-(n+1)m-n}(\mathcal{S})$. Then $f^{n}(x)$ belongs 
	to some $u'_{j}$. If $f^{n} (x) \in F_j$, then $f^{n}(x) \in F$.
	The only way that $f^{n} (x)$ may not belong to $F_j$ is if it belongs to 
	\[\displaystyle{\bigcup_{-(n+1) < i < (n+1)}} f^{-i}(F_{j-1}).\]
	Then we obtain that $x \in f^{-n-i} (F)$ with $-n-1 < i < n+1$. This concludes the proof.
	\end{enumerate} 
	\end{proof}

	\subsection{Well marked partitions\label{section.marking}}
	
	In this section we define marked partitions, as well as constraints on the marker system 
	that we will need to be satisfied when constructing a suitable sequence of partitions for the 
	system $(X,f)$. 	
	
	\begin{definition}
	A \textbf{marked partition} is a pair $(\mathcal{U},\tau,\chi)$ where $(\mathcal{U},\tau)$ is 
	a supercyclical partition for $(X,f)$, and 
	$\chi : \mathcal{U} \rightarrow \{\uparrow,*,0,\downarrow\}$. 
	An element $u$ of $\mathcal{U}$ such that $\chi(u)=*$ is called a \textbf{marker}. 
	When $\chi(u)=\uparrow$, $u$ is called a \textbf{potential}.
	\end{definition}
	
	\begin{definition}
	We say that a marked partition $(\mathcal{U},\tau,\chi)$ is \textbf{well marked} when the 
	function $\chi$ has the following 
	properties: 
	\begin{enumerate}
	\item For all $u$ in the attracted part of $(\mathcal{U},\tau)$, $\chi(u)=0$; for all 
	other $u$, $\chi(u) \neq 0$.
	\item The circuits whose vertices are all contained in the supercyclical part of $(\mathcal{U},\tau)$ all contain at least one marker and one potential.
	\end{enumerate}
	\end{definition}
	
	The following is straightforward: 
	
	\begin{lemma}\label{lemma.transmission.well.marked}
	Let us consider a well marked supercyclical partition 
	$(\mathcal{U},\tau,\chi)$ and another finite clopen partition 
	$(\mathcal{V},\tau')$ where $\mathcal{V}$ refines $\mathcal{U}$ and $\tau' \ge \tau$. Let us denote $\chi' : \mathcal{V} \rightarrow
	\{\uparrow,\downarrow,*,0\}$ whose value is $0$ on the attracted part of $(\mathcal{V},\tau')$ 
	and such that for all $v \in \mathcal{V}$ in the supercyclical part of $(\mathcal{V},\tau')$, 
	$\chi(\pi_{\mathcal{U}}^{\mathcal{V}}(v)) = \chi'(v)$. Then $(\mathcal{V},\tau',\chi')$ 
	is well-marked.
	\end{lemma}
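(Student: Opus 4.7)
The plan is to verify the two defining conditions of being well marked for $(\mathcal{V},\tau',\chi')$, exploiting the fact that $\pi_{\mathcal{U}}^{\mathcal{V}}$ is a graph morphism and that refinement together with the inequality $\tau' \ge \tau$ controls how the attracted and supercyclical parts sit inside each other.

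First, I check that $\chi'$ vanishes exactly on the attracted part of $(\mathcal{V},\tau')$. By construction $\chi'(v) = 0$ whenever $v$ lies in that attracted part, so it remains to show $\chi'(v) \neq 0$ for $v$ in the supercyclical part of $(\mathcal{V},\tau')$. For this, suppose to the contrary that $u := \pi_{\mathcal{U}}^{\mathcal{V}}(v)$ is in the attracted part of $(\mathcal{U},\tau)$, i.e.\ $u \in \leftidx{_*^p}{\mathcal{U}}$ for some $|p| \le \tau$. Then $v \subset u \subset \mathcal{E}(\leftidx{_*^p}{\mathcal{U}})$ and Lemma~\ref{lemma.supercyclical.stability} gives $\mathcal{E}(\leftidx{_*^p}{\mathcal{U}}) \subset \mathcal{E}(\leftidx{_*^p}{\mathcal{V}})$, so $v$ is in the attracted part of $(\mathcal{V},\tau)$ and, since $\tau' \ge \tau$, also in the attracted part of $(\mathcal{V},\tau')$, contrary to assumption. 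Hence $u$ belongs to the supercyclical part of $(\mathcal{U},\tau)$, and by well-markedness of $(\mathcal{U},\tau,\chi)$ we have $\chi'(v) = \chi(u) \neq 0$.

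Second, I show that every circuit of $G(\mathcal{V})$ whose vertices are entirely contained in the supercyclical part of $(\mathcal{V},\tau')$ contains at least one marker and at least one potential. Let $c = (v_1, \ldots, v_k, v_1)$ be such a circuit. Because $\pi_{\mathcal{U}}^{\mathcal{V}}$ is a graph morphism, the image $\pi_{\mathcal{U}}^{\mathcal{V}}(c) = (\pi_{\mathcal{U}}^{\mathcal{V}}(v_1), \ldots, \pi_{\mathcal{U}}^{\mathcal{V}}(v_k), \pi_{\mathcal{U}}^{\mathcal{V}}(v_1))$ is a closed walk in $G(\mathcal{U})$, and by the argument of the preceding paragraph all of its vertices lie in the supercyclical part of $(\mathcal{U},\tau)$. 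A standard graph-theoretic observation decomposes any closed walk in a directed graph into a concatenation of circuits: pick any such circuit $c'$ appearing in this decomposition. Its vertex set is a subset of $\{\pi_{\mathcal{U}}^{\mathcal{V}}(v_1), \ldots, \pi_{\mathcal{U}}^{\mathcal{V}}(v_k)\}$ and therefore contained in the supercyclical part of $(\mathcal{U},\tau)$, so by well-markedness of $(\mathcal{U},\tau,\chi)$ the circuit $c'$ contains some vertex $\pi_{\mathcal{U}}^{\mathcal{V}}(v_i)$ with $\chi(\pi_{\mathcal{U}}^{\mathcal{V}}(v_i)) = *$ and some vertex $\pi_{\mathcal{U}}^{\mathcal{V}}(v_j)$ with $\chi(\pi_{\mathcal{U}}^{\mathcal{V}}(v_j)) = \uparrow$. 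By the definition of $\chi'$, the corresponding indices yield $\chi'(v_i) = *$ and $\chi'(v_j) = \uparrow$, so $c$ contains a marker and a potential.

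There is essentially no obstacle here: both verifications reduce to pushing forward through $\pi_{\mathcal{U}}^{\mathcal{V}}$ and applying the well-markedness hypothesis on $(\mathcal{U},\tau,\chi)$. The only mildly delicate point is the compatibility between the attracted parts at levels $\tau$ and $\tau'$, which is handled by combining Lemma~\ref{lemma.supercyclical.stability} with the assumption $\tau' \ge \tau$; this is exactly why the hypothesis on $\tau'$ is included in the statement.
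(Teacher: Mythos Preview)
Your proof is correct and is precisely the routine verification the paper has in mind: the lemma is stated there as ``straightforward'' with no proof given, and your argument---pushing circuits forward through the graph morphism $\pi_{\mathcal{U}}^{\mathcal{V}}$ and invoking Lemma~\ref{lemma.supercyclical.stability} to compare attracted parts---is exactly the intended justification.
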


	\begin{definition}\label{def:markersymbols}
	Let us consider two well-marked partitions 
	$(\mathcal{U},\tau,\chi)$ and $(\mathcal{V},\tau',\chi')$ such that $\mathcal{V} \prec \mathcal{U}$.
	We say that $(\mathcal{V},\tau',\chi')$ is \textbf{well-marked relatively to} $(\mathcal{U},\tau,\chi)$ 
	when for all $u$ in the 
	supercyclical part of $(\mathcal{V},\tau')$, 
	the word $\chi'(u) \chi\left(\pi_{\mathcal{U}}^{\mathcal{V}}(u)\right)$ 
	is in $\{*\uparrow, \uparrow \uparrow, \downarrow \uparrow, \downarrow *, \downarrow \downarrow\}$. 
	\end{definition}

In particular, Definition~\ref{def:markersymbols} means that for a position in the supercyclical part of $(\mathcal{V},\tau)$, if it is mapped by $\pi_{\mathcal{U}}^{\mathcal{V}}$ to a potential then 
it is a marker, a potential or is marked with $\downarrow$. Otherwise it is marked with 
$\downarrow$.

	\begin{notation}
	For a finite clopen supercyclical partition $\mathcal{U}$, we denote by 
	$\eta(\mathcal{U})$ 
	the maximal length of a circuit in $G(\mathcal{U})$ whose vertices are 
	all contained in the supercyclical part of $\mathcal{U}$. 
	\end{notation}
	
	\begin{lemma}\label{lemma.well.marked}
	Let us consider a well-marked partition $(\mathcal{U},\tau,\chi)$ and 
	some integer $n$ larger or equal to $1$. There exists a
	well marked partition $(\mathcal{V},\tau',\chi')$, where $\mathcal{V}$ is of order 
	at least $\max(2 \eta(\mathcal{U}),o(\mathcal{U})+1)$ such that $\mathcal{V} \prec \mathcal{U}$, $\mathcal{V} \prec \mathcal{U}^0_n$, $\tau' \ge \tau+1$,
	and $(\mathcal{V},\tau',\chi')$ is well-marked relatively to $(\mathcal{U},\tau,\chi)$.
	\end{lemma}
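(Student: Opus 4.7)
The plan is to build $\mathcal{V}$ from $\mathcal{U}$ in two refinement stages and then to define $\chi'$ using a Krieger marker.

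In the first stage, I apply Lemma~\ref{lemma.augmenting.order} iteratively to raise the supercyclical order of a refinement of $\mathcal{U}$ up to $\max(2\eta(\mathcal{U}),o(\mathcal{U})+1)$, and then take the common refinement with $\mathcal{U}^0_n$; this preserves the supercyclical order by Lemma~\ref{lemma.supercyclical.stability}. Call the resulting partition $\tilde{\mathcal{U}}$, with parameter $\tilde{\tau}\ge\tau+1$. I transport the marking $\chi$ to a marking $\tilde{\chi}$ on $\tilde{\mathcal{U}}$ along the refinement morphism, so that $(\tilde{\mathcal{U}},\tilde{\tau},\tilde{\chi})$ is well-marked by Lemma~\ref{lemma.transmission.well.marked}.

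In the second stage, I apply Theorem~\ref{theorem.krieger} to $(\tilde{\mathcal{U}},\tilde{\tau})$ with separation parameter $n_0\ge\eta(\mathcal{U})$ to obtain an $(n_0,t,2n_0+1)$-marker $F$ in the supercyclical part of $\tilde{\mathcal{U}}$. Since $F$ is clopen, I refine $\tilde{\mathcal{U}}$ to a partition $\mathcal{V}$ whose every atom is either contained in $F$ or disjoint from $F$, and set $\tau'=\tilde{\tau}$. The marking $\chi'$ is defined as follows: $\chi'(v)=0$ on the attracted part of $(\mathcal{V},\tau')$; $\chi'(v)=\downarrow$ when $\tilde{\chi}(\pi_{\tilde{\mathcal{U}}}^{\mathcal{V}}(v))\in\{*,\downarrow\}$; and when $\tilde{\chi}(\pi_{\tilde{\mathcal{U}}}^{\mathcal{V}}(v))=\uparrow$, set $\chi'(v)=*$ if $v\subseteq F$ and $\chi'(v)=\uparrow$ otherwise. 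Relative well-markedness with respect to $(\mathcal{U},\tau,\chi)$ is built into the definition, and composing with the refinement $\pi_{\mathcal{U}}^{\tilde{\mathcal{U}}}$ yields the required compatibility with the original $\chi$.

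The main obstacle is verifying standard well-markedness of $(\mathcal{V},\tau',\chi')$, namely that every circuit $c$ in the supercyclical part of $G(\mathcal{V})$ contains at least one marker and at least one potential. The strategy is to project $c$ to a closed walk $\bar c$ in the supercyclical part of $G(\tilde{\mathcal{U}})$ (which is valid because the supercyclical part of $\mathcal{V}$ projects into that of $\tilde{\mathcal{U}}$), decompose $\bar c$ into simple cycles each of length at most $\eta(\mathcal{U})$ and each carrying at least one $\uparrow$-vertex by well-markedness of $\tilde{\mathcal{U}}$, and conclude that at least $\lceil |c|/\eta(\mathcal{U})\rceil$ vertices of $c$ have a potential as parent. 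The $(n_0+1)$-separation of $F$ with $n_0\ge\eta(\mathcal{U})$ then prevents all these vertices from lying in $F$, providing the required potential. Conversely, the Krieger coverage property, together with the fineness of $\mathcal{V}$ below $\mathcal{U}^0_n$, is used to force at least one vertex of $c$ into $F$, providing the required marker. The most delicate point is this transfer of the coverage property—stated in terms of preimages under $f$ of the supercyclical part—into a combinatorial statement on the vertices of $c$, which is where the interplay between $2\eta(\mathcal{U})$ in the order, the separation parameter $n_0$, and the depth $n$ of the partition sequence enters in an essential way.
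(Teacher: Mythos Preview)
Your marking rule can fail to place any marker on a circuit. You set $\chi'(v)=*$ exactly when $v\subseteq F$ \emph{and} $\tilde\chi(\pi^{\mathcal V}_{\tilde{\mathcal U}}(v))=\uparrow$; the second clause is forced by relative well-markedness, but nothing guarantees that the conjunction is ever satisfied along a given circuit $c$. Your coverage argument only forces some vertex of $c$ into $F$---it does not force that vertex to have a potential as parent in $\mathcal{U}$---and your counting argument runs in the opposite direction: it shows many potential-parented vertices of $c$ lie \emph{outside} $F$ (yielding a $\uparrow$), not that any lies \emph{inside} $F$. This is exactly the obstruction the paper discusses in the remark following the proof, and it is why the paper does not use the deterministic rule ``$*$ iff in $F$''. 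Instead the paper takes separation parameter $s=2\eta(\mathcal{U})$ and shows that every circuit carries a run of $s$ consecutive vertices lying respectively in $f^{-s+1}(F),\dots,f^{-1}(F),F$; from the first $\eta(\mathcal{U})$ of these it selects a preimage of a potential to mark $*$, and from the last $\eta(\mathcal{U})$ a preimage of a potential to mark $\uparrow$, the pairwise disjointness of the sets $f^{-l}(F)$ guaranteeing that these two selections are distinct and remain consistent across overlapping circuits.

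There is also a technical omission you yourself flag but do not resolve. You refine only so that atoms respect $F$, but the passage from the dynamical statements $F\cap f^{-k}(F)=\emptyset$ and $\bigcup_{k\le N} f^{-k}(F)\supset f^{-t}(\E(\mathcal{S}))$ to the corresponding combinatorial statements on a circuit in $G(\mathcal{V})$ requires more. An edge $u\to v$ in $G(\mathcal{V})$ only supplies a single point $x\in u$ with $f(x)\in v$, so following a length-$k$ path in the graph does not yield a point whose $f^k$-image lies in the terminal vertex; compatibility of atoms with $F$ alone is therefore insufficient to bound the number of $F$-vertices along $c$ or to deduce that any vertex of $c$ actually lies in $F$. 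The paper handles this by a further refinement so that $\diam f^l(u)<\zeta/(2s+2)$ for every atom $u$ and every $l\le 2s+1$, where $\zeta$ is a Lebesgue number for the disjoint clopen family $\{f^{-l}(F)\}_{l\le s}$, and by an additional refinement ensuring that the supercyclical part of $\mathcal{V}$ sits inside $f^{-t}(\E(\mathcal{S}_1))$; this chaining argument is the content of its Steps~(2) and~(4). The given integer $n$ (for $\mathcal{U}^0_n$) plays no role here---the refinements needed are governed by $F$ and $s$, not by $n$.
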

	
	\begin{proof}
\textbf{(1)} \textbf{Setup:}
	We first consider $\mathcal{V}_0$ to be the refinement $\mathcal{U} \vee \mathcal{U}^0_{n+t}$, for $t$ large enough so that $o(\mathcal{V}_0) \ge o(\mathcal{U})+1$. If $ o(\mathcal{V}_0) < 2 \eta(\mathcal{U})$, 
	we use Lemma~\ref{lemma.augmenting.order} in order to refine this partition into 
	another supercyclical partition $\mathcal{V}_1$ of order at least $2 \eta(\mathcal{U})$. Otherwise we set $\mathcal{V}_1 := \mathcal{V}_0$. 
	The partition $\mathcal{V}_1$ refines $\mathcal{U}$ and $\mathcal{U}_n^{0}$, and is of order at least $\max(2\eta(\mathcal{U}),o(\mathcal{U})+1)$. Since these properties 
	are stable by refinement, the partitions constructed later in this proof ($\mathcal{V}_2$ and $\mathcal{V}$) will also have this property. 
	As a consequence of Theorem~\ref{theorem.krieger} the partition 
	$\mathcal{V}_1$ admits some $(s,(s+1)m+s,2s+1)$-marker denoted by $F$, where $m\ge 1$ and $s = 2 \eta(\mathcal{U})$.

\textbf{(2) Further refinements:} \textbf{\textit{(i)}} There exists an integer $n_1 \ge n+t$ such that for $l \le s$ each of the sets $f^{-l}(F)$ is
	the union of some elements in $\mathcal{U}_{n_1}^0$. 
	Since $F$ is clopen, the set $K=\bigcup_{l=0}^{s} f^{-l}(F)$ is also clopen, and as a consequence there exists some $\zeta>0$ such that if
	$d(x,f^{-l}(F))<\zeta$ for some $l\leq s$ then $x\in f^{-l}(F)$. We may also impose that $n_1$ is large enough so that for every $u\in \mathcal{U}_{n_1}^0$
we have $\diam f^l(u)<\zeta/(2s+2)$ for $l=0,\ldots, 2s+1$.
	We then consider 
	$\mathcal{V}_2=\mathcal{V}_1 \vee \mathcal{U}_{n_1}$.
\textbf{\textit{(ii)}} We perform one more modification on $\mathcal{V}_2$.
By definition of a marker, if we denote by $\mathcal{S}_1$ the supercyclical part of $\mathcal{V}_1$ then $F, f^{-1}(F), \ldots , f^{-2s-1}(F)$ cover the 
set $f^{-(s+1)m-s}(\E(\mathcal{S}_1))$. Since $\mathcal{E}(\mathcal{S}_2) \subset \mathcal{E}(\mathcal{S}_1)$ (by definition, the attracted part of $\mathcal{V}_2$ contains
the attracted part of $\mathcal{V}_1$), 
we have:
 \[f^{-(s+1)m-s}(\E(\mathcal{S}_2))\subset f^{-(s+1)m-s}(\E(\mathcal{S}_1)).\]
 
Furthermore, every point in $\E(\mathcal{S}_2)\setminus f^{-(s+1)m-s}(\E(\mathcal{S}_2))$
have to enter the attracted part of $\mathcal{V}_2$ after at most $(s+1)m+s$ iterations. Therefore, for sufficiently large integer $n_2$, the continuity of $f$
implies that any vertex $u\in \mathcal{U}^{0}_{n_2}$ either satisfies the inclusion $u\subset f^{-(s+1)m-s}(\E(\mathcal{S}_2))$ or $f^{(s+1)m+s}(u)\subset \E(\mathcal{V}_2\setminus \mathcal{S}_2)$. In particular, considering $\mathcal{V} = \mathcal{V}_2 \vee \mathcal{U}^{0}_{n_2}$, we have that $\E(\mathcal{S}) \subset f^{-(s+1)m-s}(\E(\mathcal{S}_2))$, and thus $\E(\mathcal{S}) \subset f^{-(s+1)m-s}(\E(\mathcal{S}_1))$, where $\mathcal{S}$ is the supercyclical part of $\mathcal{V}$.

We set $\tau':= o(\mathcal{V})$. Since $o(\mathcal{V}) \ge o(\mathcal{U})+1 \ge \tau+1$, we have $\tau' \ge \tau+1$. Furthermore, the supercyclical 
part of $(\mathcal{V},\tau')$ is equal to the one of $\mathcal{V}$, which is 
included in the supercyclical part of $\mathcal{U}$ and thus of $(\mathcal{U},\tau)$.
	
	\textbf{(3)} \textbf{Definition of $\chi'$ on the attracted part of $\mathcal{V}$:}
We claim 
	that there exists $\chi'$ such 
	that $(\mathcal{V},\tau',\chi')$ is well-marked 
	and well-marked relatively to $(\mathcal{U},\tau,\chi)$. We set $\chi'(u)=0$ for all $u$ 
	in the attracted part of $\mathcal{V}$. For every $u$ in the supercyclical part of $\mathcal{V}$, 
	if $\chi(\pi_{\mathcal{U}}^{\mathcal{V}}(u)) \in \{*,\downarrow\}$ or if $u$ is 
	not in any circuit contained in the supercyclical part of $\mathcal{V}$, set $\chi'(u)=\downarrow$.
	We are left to define $\chi'$ on preimages of potentials by 
	$\pi_{\mathcal{U}}^{\mathcal{V}}$ which are in a circuit of vertices in the supercyclical part of $\mathcal{V}$.

\textbf{(4) Circuits of the supercyclical part of $\mathcal{V}$ follow the sequence\break $f^{-s+1}(F), f^{-1}(F),..., F$ on some segment:} 
\textbf{(i) Claim:} By construction, for each of the circuits of vertices in the supercyclical part of $\mathcal{V}$, its vertices are included in 
$f^{-(s+1)m-s}(\E(\mathcal{S}_1))$ (since $\E(\mathcal{S}) \subset f^{-(s+1)m-s}(\E(\mathcal{S}_1))$). As a consequence (by the definition of marker)
	each of its vertices is included in some $f^{-k}(F)$ for $k \le 2s+1$.
	\textit{We claim}
	that each circuit has some sequence of consecutive vertices respectively included in 
	$f^{-s+1}(F), f^{-1}(F),..., F$ (it is possible to have other vertices outside of these sets).  
	\textbf{(ii) When the circuit intersects $F$:} \textit{Indeed}, for any vertex $u\subset F$ such that there 
	exists an edge $(v,u)$, there exists $x\in v$ such that $f(x)\in u\subset F$. This implies that $v\cap f^{-1}(F)\neq \emptyset$ and since $\diam v<\zeta$ we have $v\subset f^{-1}(F)$.
We apply a similar reasoning to $u \subset f^{-l+1}(F)$ for $l\leq (s-1)$: as a 
consequence the claim holds for any circuit which has a vertex included in $F$.
\textbf{(iii) General case:}
Let $c$ be a circuit whose vertices are included in the supercyclical part of $\mathcal{V}$. Since it is covered 
by the sets $f^{-l}(F)$ for $l\leq 2s+1$, there is some $l \le 2s+1$ 
and a vertex $u_0$ in the circuit $c$ such that $u_0\cap f^{-l}(F) \neq \emptyset$.
Let also  $u_1,\ldots, u_l$ be the next $l$ consecutive vertices in the circuit $c$. For all $i \le (l-1)$, there exists $x_i\in u_i$ such that $f(x_i)\in u_{i+1}$. Fix 
also a point $x \in u_0$ and $x_{l}\in u_l$.
Since $x,x_0\in u_0$ we have $d(f^l(x),f^l(x_0))<\zeta/(2s+2)$. Similarly for all $i \le (l-1)$, since $f(x_i),x_{i+1}\in u_{i+1}$ we have $d(f^{l-i}(x_i),f^{l-i-1}(x_{i+1}))<\zeta/(2s+2)$.
As a consequence we have:
\[
d(f^l(x),x_{l})\leq d(f^l(x),f^l(x_0))+\sum_{i=0}^{l-1}d(f^{l-i}(x_i),f^{l-i-1}(x_{i+1}))\leq \frac{\zeta(l+1)}{2s+2}\leq \zeta.\]
This implies that the distance between $x_l$ and $F$ is smaller than $\zeta$, and thus that $u_l\subset F$, which means that the claim holds for $c$.

\textbf{(5)} \textbf{Choosing markers and potentials in the preimages 
of potentials:} Let us consider a circuit $c$ in the supercyclical part of $\mathcal{V}$ and a sequence of consecutive vertices of $c$ contained respectively in $f^{-s+1}(F), f^{-1}(F),..., F$ - note that any two such sequences cannot have any common element. The first $\eta(\mathcal{U})$ of these vertices are mapped altogether via $\pi_{\mathcal{U}}^{\mathcal{V}}$ to a path which contains at least one circuit in 
	$G(\mathcal{U})$ whose vertices are in the supercyclical part of $\mathcal{U}$, by definition of $\eta(\mathcal{U})$. Since $(\mathcal{U},\tau,\chi)$ 
	is well-marked, each circuit in $G(\mathcal{U})$ whose vertices are in the 
	supercyclical part of $(\mathcal{U},\tau)$ contains at least one potential. Thus one of these vertices $\eta(\mathcal{U})$ is mapped to a potential. This is true also for the last
	$\eta(\mathcal{U})$ of these vertices 
	(let us recall that $s \ge 2\eta(\mathcal{U})$). 

 Let us list all the circuits that are in the supercyclical part of $\mathcal{V}$, say $c_1, ... , c_t$. Consider successively each of these circuits. Each time do the 
	following steps: 
	
	\begin{enumerate}
	\item If the first set of $\eta(\mathcal{U})$ 
	vertices defined above contains one for which $\chi'$ is already 
	defined to be $*$, then pass. 
	\item Otherwise pick one preimage $u$ of a potential in this set and define $\chi'(u)=*$.
	\item If the second set of vertices defined above contains one for which $\chi'$ is already 
	defined to be $\uparrow$, then pass. 
	\item Otherwise pick one preimage $u$ of a potential in this set and 
	define $\chi'(u)=\uparrow$.
	\end{enumerate}	
	For each vertex $u$ for which $\chi'$ is still undefined at the end of this process, 
	set $\chi'(u)=\downarrow$. By construction, since sets 	$f^{-s+1}(F), f^{-1}(F),..., F$ are pairwise disjoint, 
	it is straightforward that $(\mathcal{V},\chi')$ 
	is well-marked and well-marked relatively to $(\mathcal{U},\chi)$.
	\end{proof}	
	
	As a direct consequence of the construction in the proof of Lemma~\ref{lemma.well.marked} we also have the following.
	
	\begin{lemma}\label{lemma.existence.well.marked}
	There exists a well-marked partition $(\mathcal{U},\tau,\chi)$ for $(X,f)$ such that 
	$\mathcal{U} \prec \mathcal{U}_1^0$.
	\end{lemma}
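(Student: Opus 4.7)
The plan is to replay the construction from the proof of Lemma~\ref{lemma.well.marked}, starting from a trivial supercyclical partition instead of an already well-marked one. First, I would build a baseline supercyclical partition $\mathcal{V}_1 \prec \mathcal{U}_1^0$ of order at least $1$ as follows. By Lemma~\ref{lemma.all.attracting} the set $P$ of fixed points of $f$ is finite; for each $p \in P$ we may pick a clopen attracting neighborhood $\hat{u}_p$, and by shrinking them we may further assume that they are pairwise disjoint and that each $\hat{u}_p$ lies in a single element of $\mathcal{U}_1^0$. Choosing $k$ large enough so that each $\hat{u}_p$ is a union of elements of $\mathcal{U}_k^0$, the partition $\mathcal{V}_1 := \mathcal{U}_k^0$ refines $\mathcal{U}_1^0$, and setting $\leftidx{^p}{\mathcal{V}_1}$ to be the elements of $\mathcal{V}_1$ contained in $\hat{u}_p$ witnesses that $(\mathcal{V}_1, 1)$ is supercyclical (this is vacuous if $P = \emptyset$).

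Next, I would apply Theorem~\ref{theorem.krieger} to $\mathcal{V}_1$ with some $s \ge 2$, producing an $(s, (s+1)m+s, 2s+1)$-marker $F$ in the supercyclical part. Mirroring step $(2)$ of the proof of Lemma~\ref{lemma.well.marked}, I would refine $\mathcal{V}_1$ further to a partition $\mathcal{V}$ such that each of the sets $F, f^{-1}(F), \ldots, f^{-s}(F)$ is a union of elements of $\mathcal{V}$, such that the diameters of its elements are small enough for the distance estimate of step $(4)$(iii) to apply, and such that $\mathcal{V}$ still refines $\mathcal{U}_1^0$. Set $\tau := o(\mathcal{V}) \ge 1$.

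Now define $\chi \colon \mathcal{V} \to \{\uparrow, *, 0, \downarrow\}$ by $\chi(u) = 0$ for $u$ in the attracted part of $(\mathcal{V}, \tau)$ and $\chi(u) = \downarrow$ tentatively for $u$ in the supercyclical part. For each circuit $c$ whose vertices all lie in the supercyclical part, the argument of step $(4)$ of Lemma~\ref{lemma.well.marked} shows that $c$ contains a run of $s$ consecutive vertices lying successively in $f^{-(s-1)}(F), \ldots, f^{-1}(F), F$. Pick one such vertex and relabel it $*$; pick a distinct vertex of the run and relabel it $\uparrow$. Since the sets $f^{-l}(F)$ are pairwise disjoint, the relabellings performed for different circuits cannot conflict, and every circuit in the supercyclical part ends up with both a marker and a potential. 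Thus $(\mathcal{V}, \tau, \chi)$ is well-marked and refines $\mathcal{U}_1^0$.

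The main obstacle I anticipate is the initial bootstrap, namely producing a supercyclical partition $\mathcal{V}_1 \prec \mathcal{U}_1^0$ without any prior structure to leverage. Once this is in place, the remainder follows the construction of Lemma~\ref{lemma.well.marked} almost verbatim; crucially, the absence of any previous marking to be compatible with means step $(5)$ becomes trivial, as the disjointness of the sets $f^{-l}(F)$ already guarantees that markers and potentials can be placed freely in each circuit of the supercyclical part.
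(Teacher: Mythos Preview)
Your proposal is correct and follows essentially the same approach as the paper, which simply states that the lemma is a direct consequence of the construction in the proof of Lemma~\ref{lemma.well.marked}. You have spelled out the bootstrap step (producing an initial supercyclical partition of order at least~$1$ refining $\mathcal{U}_1^0$) and correctly observed that, absent a prior marking to respect, step~(5) of that construction simplifies to placing a marker and a potential in each circuit using the disjoint sets $f^{-l}(F)$; to make the non-conflict claim airtight you should fix in advance which of the $f^{-l}(F)$ supplies the marker and which supplies the potential (e.g.\ always $F$ for $*$ and $f^{-1}(F)$ for $\uparrow$), so that a vertex shared by several circuits receives a consistent label.
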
	
	
	\begin{remark}
	Before going further, let us explain why we needed to use Krieger's markers in order to 
	obtain Lemma~\ref{lemma.well.marked}. 
	
		\begin{figure}[h!]
		\begin{center}
		\begin{tikzpicture}[scale=0.2]
		
			\draw[fill=gray!98] (4,0) circle (5pt);
			\draw[fill=red] (0,4) circle (6pt);
			\draw[fill=red] (0,-4) circle (6pt);
			\draw[fill=gray!98] (-4,0) circle (5pt);

			\draw[-latex] (0.5,4) -- (4,0.5);
			\draw[-latex] (4,-0.5) -- (0.5,-4);
			\draw[-latex] (-0.5,-4) -- (-4,-0.5);
			\draw[-latex] (-4,0.5) -- (-0.5,4);
			\begin{scope}[yshift=-8.25cm]
			\draw[-latex] (0.5,4) -- (4,0.5);
			\draw[-latex] (4,-0.5) -- (0.5,-4);
			\draw[-latex] (-0.5,-4) -- (-4,-0.5);
			\draw[-latex] (-4,0.5) -- (-0.5,4);
			\draw[fill=red] (0,-4) circle (6pt);
			\draw[fill=gray!98] (-4,0) circle (5pt);
			\draw[fill=gray!98] (4,0) circle (5pt);
			\end{scope}
			
			\draw[fill=gray!98] (-8,-4) circle (5pt);
			\draw[fill=gray!98] (8,-4) circle (5pt);
			
			\draw[-latex] (-8,-3.5) to [out=90,in=180] (-0.5,4.25);
			\draw[-latex] (0.5,4.25) to [out=0,in=90] (8,-3.5);
			\draw[-latex] (8,-4.5) to [out=-90,in=0] (0.5,-12.5) ;
			\draw[-latex] (-0.5,-12.5)  to [out=180,in=-90] (-8,-4.5);
			
			\draw[dashed,-latex] (12,-4) -- (20,-4);
			\node at (16,-6) {$\pi$};
			\begin{scope}[xshift=25cm]
			\draw[fill=gray!98] (0,-2) circle (5pt);
			\draw[fill=red] (0,-6) circle (6pt);
			\node[scale=0.9] at (0,-8) {$\uparrow$};
			\node[scale=0.9] at (0,0) {$*$};
			\draw[-latex] (1,-2) to[out=-45,in=45] (1,-6);
			\draw[-latex] (-1,-6) to[out=135,in=-135] (-1,-2);
			\end{scope}
			
		\end{tikzpicture}
		\end{center}
		\caption{Two finite directed graphs $G=(V,E)$ (left) and $G'=(V',E')$ (right) 
		and $\pi: V\rightarrow V'$ morphism, sending vertices of $G$ to the one 
		of the same color in $G'$. The graph $G'$ can correspond to a well-marked partition 
		$(\mathcal{V},\tau,\chi)$. 
		However it is not possible to find a partition whose graph is $G$ and 
		which would be well-marked relatively to $(\mathcal{V},\tau,\chi)$. Indeed, whatever 
		the way we mark the red vertices, there will be at least one circuit left with 
		no marker or no potential.}\label{figure.example.marker.problem}
	\end{figure}
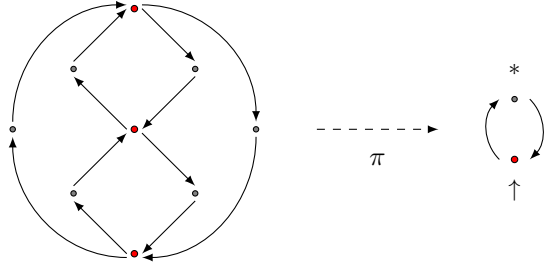

	Intuitively we could mark the partitions 
	using a simple argument which consists in noticing that it is possible to refine supercyclic 
	partitions so that circuits in the supercyclic part are uniformly arbitrarily large, 
	and thus are mapped by the graph morphism to a concatenation of at least two circuits of 
	the initial partition's graph. This way each circuit contains two preimages of potentials, 
	one of which we could define to be a marker and the other one a potential. The problem
	with this reasoning however is that circuits may have a lot of intersections, and 
	marking vertices in a circuit 
	could prevent the possibility of similar markings for other 
	circuits. This appears clear for the two graphs $G,G'$ on
	Figure~\ref{figure.example.marker.problem} and the morphism $\pi$ from $G$ to $G'$.
	In this figure $G$ could be replaced with some graphs 
	whose circuits could be taken uniformly arbitrarily 
	long (and having many more intersections). 
	\end{remark}

	\subsection{Rectification of a partition well marked relatively to 
	another one\label{section.rectification}}
	
	In this section we define the last operation on marked partitions in order 
	to ensure some other properties that will be needed for the graph coverings 
	representation used in the proof of Theorem~\ref{theorem.main}.

	\begin{definition}
	Let us consider $G$ a finite directed graph. We will call \textbf{acyclic cut} of $G$ 
	any set of its vertices such that by removing from $G$ all the edges that are pointing 
	at a vertex in this set, we obtain an acyclic graph.
	\end{definition}

	\begin{definition}
	Let us consider $G$ a finite directed graph. 
	A \textbf{divergent vertex} of 
	this graph is some vertex $u$ which has at least two outgoins edges.
	\end{definition}

	\begin{notation}
	Let us consider a finite directed graph $G$ and 
	$\chi \colon V \rightarrow \{\downarrow,\uparrow,0,*\}$, where $V$ is 
	the set of vertices of $G$. Let us denote by $\mathcal{R}(G,\chi)$ 
	the set whose elements are the following ones: every vertex $u \in V$ such that 
	$\chi(u)=0$ and such that there is no divergent vertex $v \in V$ with  
	$\chi(v) \neq 0$ with an edge from $v$ to $u$; the edges of $G$ pointing at 
	any of the vertices in $\mathcal{R}(G,\chi)$, and the edges from $u$ to $v$ 
	with $\chi(u)=\chi(v)=0$. 
	
	Let us denote by $\mathcal{I}(G,\chi)$ the graph 
	obtained from $G$ by removing the vertices and edges which are in $\mathcal{R}(G,\chi)$.
	We will also denote $\mathcal{A}(G,\chi)$ the graph 
	obtained from $\mathcal{I}(G,\chi)$ by adding, for each vertex $u$ of this graph 
	such that $\chi(u)=*$ (the function $\chi$ is defined on 
	vertices of $\mathcal{I}(G,\chi)$ by restriction) a copy $c_{u}$ of this vertex,
	and changing the edges pointing at $u$ so that they point at $c_u$ without changing 
	their origin. We also denote 
	$\mathcal{S}(G,\chi)$ the set of pairs $(u,c_{u})$ for $u$ vertex of $\mathcal{I}(G,\chi)$
	such that $\chi(u)=*$. The purpose of this set is to keep record of which vertex 
	is a copy of which vertex, for the reason that we will have to conflate 
	$u$ and $c_u$ at some point.
	This construction is illustrated on Figure~\ref{figure.acyclic.cutting}.
	\end{notation}

	\begin{remark}
	Considering a well-marked partition $(\mathcal{U},\tau,\chi)$, the set of its 
	markers forms an acyclic cut of $\mathcal{I}(G(\mathcal{U}),\chi)$. Thus the 
	graph $\mathcal{A}(G(\mathcal{U}),\chi)$ is acyclic. 
	\end{remark}
	
	\begin{figure}[h!]
	\begin{center}
			\begin{tikzpicture}[scale=0.2]
			\node at (-6,10) {$\boldsymbol{(G,\chi)}$};
			\draw[dashed,fill=gray!5] (-6,-5.5) rectangle (6,6.5);
			\draw[fill=gray!98] (0,-4) circle (5pt);
			\draw[fill=gray!98] (0,4) circle (5pt);
			\draw[fill=gray!98] (-4,0) circle (5pt);
			\draw[fill=gray!98] (4,0) circle (5pt);
			\draw[fill=gray!98] (2.8,2.8) circle (5pt);
			\draw[fill=gray!98] (2.8,-2.8) circle (5pt);
			\draw[fill=gray!98] (-2.8,2.8) circle (5pt);
			\draw[fill=gray!98] (-2.8,-2.8) circle (5pt);
			\draw[fill=gray!98] (-5.5,0) circle (5pt);
			\draw[fill=gray!98] (-7,0) circle (5pt);
			\draw[fill=gray!98] (4.3,2.8) circle (5pt);
			\draw[fill=gray!98] (2.8,4.3) circle (5pt);
			\draw[fill=gray!98] (2.8,5.8) circle (5pt);

			\draw[-latex] (0,4) -- (-2.8,2.8);
			\draw[-latex] (-2.8,2.8) -- (-4,0);
			\draw[-latex] (2.8,2.8) -- (0,4);
			\draw[-latex] (4,0) -- (2.8,2.8);
			\draw[latex-] (4,0) -- (2.8,-2.8);
			\draw[-latex] (0,-4) -- (2.8,-2.8);
			\draw[latex-] (0,-4) -- (-2.8,-2.8);
			\draw[-latex] (-4,0) -- (-2.8,-2.8);

			\draw[-latex] (-5.5,0) -- (-4,0);
			\draw[latex-] (-5.5,0) -- (-7,0);
			
			\draw[dashed,fill=gray!5] (-17,-19.5) -- (-17,-5) -- (-11,-5) -- (-11,-2) -- (-8.5,-2) -- (-8.5,-19.5) -- (-17,-19.5);

			\draw[latex-] (-10,-3) -- (-7,0);
			\draw[latex-] (-13,-6) -- (-10,-3);
			\draw[-latex] (-13,-6) -- (-13,-8);
			\draw[-latex] (-13,-8) -- (-13,-10);
			\draw[-latex] (-13,-10) -- (-13,-12);
			\draw[-latex] (-13,-12) -- (-10,-15);
			\draw[-latex] (-10,-15) -- (-13,-18);
			\draw[-latex] (-13,-18) -- (-16,-15);
			\draw[-latex] (-16,-15) -- (-13,-12);
			\draw[-latex] (-14.5,-4.5) -- (-13,-6);
			\draw[fill=gray!98] (-10,-3) circle (5pt);
			\draw[fill=gray!98] (-13,-6) circle (5pt);
			\draw[fill=gray!98] (-13,-8) circle (5pt);
			\draw[fill=gray!98] (-13,-10) circle (5pt);
			\draw[fill=gray!98] (-13,-12) circle (5pt);
			\draw[fill=gray!98] (-16,-15) circle (5pt);
			\draw[fill=gray!98] (-13,-18) circle (5pt);
			\draw[fill=gray!98] (-10,-15) circle (5pt);
			
			\draw[-latex] (-8.5,1.5) -- (-7,0);
			\draw[-latex] (-8.5,1.5) -- (-8.5,3.5);
			\draw[-latex] (-8.5,3.5) -- (-10.5,5.5);
			\draw[-latex] (-10.5,5.5) -- (-12.5,5.5);
			\node[scale=0.9] at (-12.5,6.5) {$*$};
			\draw[-latex] (-12.5,5.5) -- (-14.5,3.5);
			\draw[-latex] (-14.5,3.5) -- (-16.5,3.5);
			\draw[-latex] (-16.5,3.5) -- (-16.5,1.5);
			\draw[-latex] (-16.5,1.5) -- (-14.5,1.5);
			\draw[-latex] (-14.5,3.5) -- (-14.5,1.5);
			\draw[-latex] (-14.5,1.5) -- (-12.5,-0.5);
			\draw[-latex] (-12.5,-0.5) -- (-10.5,-0.5);
			\draw[-latex] (-10.5,-0.5) -- (-8.5,1.5);
			\draw[-latex] (-18.5,-0.5) -- (-16.5,1.5);
			\draw[-latex] (-18.5,-2.5) -- (-18.5,-0.5);
			\draw[-latex] (-16.5,-4.5) -- (-18.5,-2.5);
			\draw[-latex] (-14.5,-4.5) -- (-16.5,-4.5);
			\node[scale=0.9] at (-17.5,-5.5) {$*$};
			\draw[-latex] (-12.5,-2.5) -- (-14.5,-4.5);
			\draw[-latex] (-12.5,-0.5) -- (-12.5,-2.5);
			
			\draw[fill=gray!98] (-12.5,-2.5) circle (5pt);
			\draw[fill=gray!98] (-14.5,-4.5) circle (5pt);
			\draw[fill=gray!98] (-16.5,-4.5) circle (5pt);
			\draw[fill=gray!98] (-18.5,-2.5) circle (5pt);
			\draw[fill=gray!98] (-18.5,-0.5) circle (5pt);
			\draw[fill=gray!98] (-8.5,3.5) circle (5pt);
			\draw[fill=gray!98] (-10.5,5.5) circle (5pt);
			\draw[fill=gray!98] (-12.5,5.5) circle (5pt);
			\draw[fill=gray!98] (-14.5,3.5) circle (5pt);
			\draw[fill=gray!98] (-14.5,1.5) circle (5pt);
			\draw[fill=gray!98] (-12.5,-0.5) circle (5pt);
			\draw[fill=gray!98] (-10.5,-0.5) circle (5pt);
			\draw[fill=gray!98] (-8.5,1.5) circle (5pt);
			\draw[fill=gray!98] (-16.5,1.5) circle (5pt);
			\draw[fill=gray!98] (-16.5,3.5) circle (5pt);
			
			\draw[-latex] (2.8,4.3) -- (2.8,2.8);
			\draw[-latex] (4.3,2.8) -- (2.8,2.8);
			\draw[latex-] (2.8,4.3) -- (2.8,5.8);
			
			\begin{scope}[xshift=35cm]
			\node at (-12,10) {$\boldsymbol{\mathcal{I}(G,\chi)}$};
	
			\draw[fill=gray!98] (-5.5,0) circle (5pt);
			\draw[fill=gray!98] (-7,0) circle (5pt);

			\draw[latex-] (-5.5,0) -- (-7,0);

			\draw[latex-] (-10,-3) -- (-7,0);

			\draw[-latex] (-14.5,-4.5) -- (-13,-6);
			\draw[fill=gray!98] (-10,-3) circle (5pt);
			\draw[fill=gray!98] (-13,-6) circle (5pt);

			\draw[-latex] (-8.5,1.5) -- (-7,0);
			\draw[-latex] (-8.5,1.5) -- (-8.5,3.5);
			\draw[-latex] (-8.5,3.5) -- (-10.5,5.5);
			\draw[-latex] (-10.5,5.5) -- (-12.5,5.5);
			\node[scale=0.9] at (-12.5,6.5) {$*$};
			\draw[-latex] (-12.5,5.5) -- (-14.5,3.5);
			\draw[-latex] (-14.5,3.5) -- (-16.5,3.5);
			\draw[-latex] (-16.5,3.5) -- (-16.5,1.5);
			\draw[-latex] (-16.5,1.5) -- (-14.5,1.5);
			\draw[-latex] (-14.5,3.5) -- (-14.5,1.5);
			\draw[-latex] (-14.5,1.5) -- (-12.5,-0.5);
			\draw[-latex] (-12.5,-0.5) -- (-10.5,-0.5);
			\draw[-latex] (-10.5,-0.5) -- (-8.5,1.5);
			\draw[-latex] (-18.5,-0.5) -- (-16.5,1.5);
			\draw[-latex] (-18.5,-2.5) -- (-18.5,-0.5);
			\draw[-latex] (-16.5,-4.5) -- (-18.5,-2.5);
			\draw[-latex] (-14.5,-4.5) -- (-16.5,-4.5);
			\node[scale=0.9] at (-17.5,-5.5) {$*$};
			\draw[-latex] (-12.5,-2.5) -- (-14.5,-4.5);
			\draw[-latex] (-12.5,-0.5) -- (-12.5,-2.5);
			
			\draw[fill=gray!98] (-12.5,-2.5) circle (5pt);
			\draw[fill=gray!98] (-14.5,-4.5) circle (5pt);
			\draw[fill=gray!98] (-16.5,-4.5) circle (5pt);
			\draw[fill=gray!98] (-18.5,-2.5) circle (5pt);
			\draw[fill=gray!98] (-18.5,-0.5) circle (5pt);
			\draw[fill=gray!98] (-8.5,3.5) circle (5pt);
			\draw[fill=gray!98] (-10.5,5.5) circle (5pt);
			\draw[fill=gray!98] (-12.5,5.5) circle (5pt);
			\draw[fill=gray!98] (-14.5,3.5) circle (5pt);
			\draw[fill=gray!98] (-14.5,1.5) circle (5pt);
			\draw[fill=gray!98] (-12.5,-0.5) circle (5pt);
			\draw[fill=gray!98] (-10.5,-0.5) circle (5pt);
			\draw[fill=gray!98] (-8.5,1.5) circle (5pt);
			\draw[fill=gray!98] (-16.5,1.5) circle (5pt);
			\draw[fill=gray!98] (-16.5,3.5) circle (5pt);
			\end{scope}
			
			\begin{scope}[xshift=35cm,yshift=-25cm]
			\node at (-12,10) {$\boldsymbol{\mathcal{A}(G,\chi)}$};
	
			\draw[fill=gray!98] (-5.5,0) circle (5pt);
			\draw[fill=gray!98] (-7,0) circle (5pt);
			\draw[latex-] (-5.5,0) -- (-7,0);
			\draw[latex-] (-10,-3) -- (-7,0);

			\draw[-latex] (-14.5,-4.5) -- (-13,-6);
			\draw[fill=gray!98] (-10,-3) circle (5pt);
			\draw[fill=gray!98] (-13,-6) circle (5pt);
			
			\draw[-latex] (-8.5,1.5) -- (-7,0);
			\draw[-latex] (-8.5,1.5) -- (-8.5,3.5);
			\draw[-latex] (-8.5,3.5) -- (-10.5,5.5);
			\draw[-latex] (-10.5,5.5) -- (-12.5,7.5);
			\draw[fill=gray!98] (-12.5,7.5) circle (5pt);
			\draw[-latex] (-12.5,5.5) -- (-14.5,3.5);
			\draw[-latex] (-14.5,3.5) -- (-16.5,3.5);
			\draw[-latex] (-16.5,3.5) -- (-16.5,1.5);
			\draw[-latex] (-16.5,1.5) -- (-14.5,1.5);
			\draw[-latex] (-14.5,3.5) -- (-14.5,1.5);
			\draw[-latex] (-14.5,1.5) -- (-12.5,-0.5);
			\draw[-latex] (-12.5,-0.5) -- (-10.5,-0.5);
			\draw[-latex] (-10.5,-0.5) -- (-8.5,1.5);
			\draw[-latex] (-18.5,-0.5) -- (-16.5,1.5);
			\draw[-latex] (-18.5,-2.5) -- (-18.5,-0.5);
			\draw[-latex] (-16.5,-4.5) -- (-18.5,-2.5);
			\draw[-latex] (-14.5,-4.5) -- (-16.5,-6.5);
			\draw[fill=gray!98] (-16.5,-6.5) circle (5pt);
			\draw[-latex] (-12.5,-2.5) -- (-14.5,-4.5);
			\draw[-latex] (-12.5,-0.5) -- (-12.5,-2.5);
			\draw[-latex] (-10,-3) -- (-13,-6);
			
			\draw[fill=gray!98] (-12.5,-2.5) circle (5pt);
			\draw[fill=gray!98] (-14.5,-4.5) circle (5pt);
			\draw[fill=gray!98] (-16.5,-4.5) circle (5pt);
			\draw[fill=gray!98] (-18.5,-2.5) circle (5pt);
			\draw[fill=gray!98] (-18.5,-0.5) circle (5pt);
			\draw[fill=gray!98] (-8.5,3.5) circle (5pt);
			\draw[fill=gray!98] (-10.5,5.5) circle (5pt);
			\draw[fill=gray!98] (-12.5,5.5) circle (5pt);
			\draw[fill=gray!98] (-14.5,3.5) circle (5pt);
			\draw[fill=gray!98] (-14.5,1.5) circle (5pt);
			\draw[fill=gray!98] (-12.5,-0.5) circle (5pt);
			\draw[fill=gray!98] (-10.5,-0.5) circle (5pt);
			\draw[fill=gray!98] (-8.5,1.5) circle (5pt);
			\draw[fill=gray!98] (-16.5,1.5) circle (5pt);
			\draw[fill=gray!98] (-16.5,3.5) circle (5pt);
			\end{scope}
			\end{tikzpicture}
		\end{center}
		\caption{Illustration on an example of the definition of the graphs 
		$\mathcal{I}(G,\chi)$ 
		and $\mathcal{A}(G,\chi)$ for the graph $G=G(\mathcal{U})$ and 
		$\chi: \mathcal{U} \rightarrow \{0,\downarrow,\uparrow,*\}$
		where $(\mathcal{U},\chi)$ is a well-marked partition; the function $\chi$ is 
		partially represented (for simplicity): only markers and 
		vertices with $\chi(u)=0$ (the ones in dashed regions) are represented.}\label{figure.acyclic.cutting}
	\end{figure}
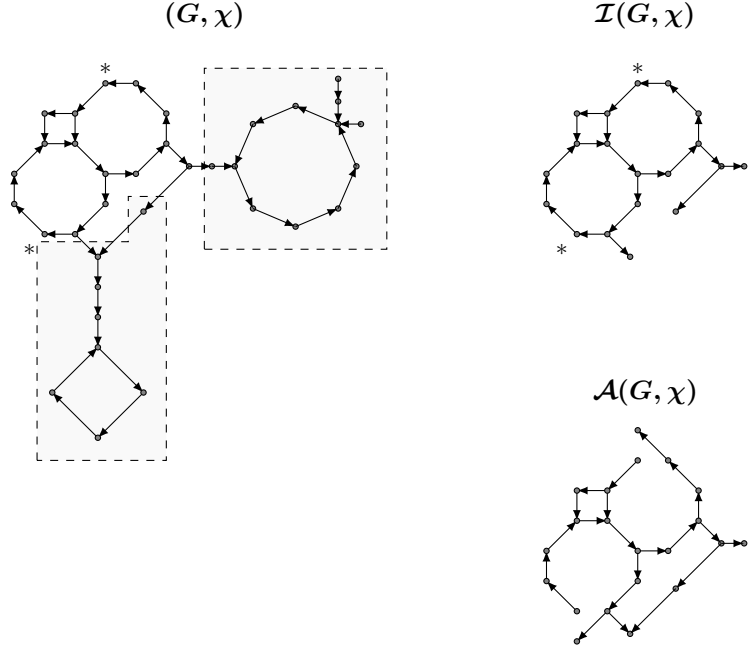

	For a supercyclical partition $\mathcal{U}$ and any integer $n \ge 1$, 
	every divergent element of $G(\kappa_n(\mathcal{U}))$ belongs to the supercyclical 
	part of $\kappa_n(\mathcal{U})$. The specificity of purely attracting 
	Cantor systems, besides the fact 
	that supercyclical partitions segregate circuits (no circuit crosses both the attracted 
	part and the supercyclical one) lies in this property, which will allow us, by refining 
	partitions, to "move" all divergent elements so that 
	they coincide with markers.

	\begin{definition}
	For an acyclic directed graph, we call \textbf{initial vertex} any vertex of this 
	graph which has no edge pointing at it. 
	\end{definition}
	
	\begin{notation}
	Let us consider $\mathcal{A}$ an acyclic directed graph. We will denote $V(\mathcal{A})$ its vertex set. Consider 
	two vertices $v$ and $v'$ of $\mathcal{A}$ such that there is a path from $v$ to $v'$ 
	in $\mathcal{A}$. We call distance - which is not a distance 
	in the usual sense - between $v$ and $v'$ the minimal length of 
	a path from $v$ to $v'$. We will denote $\delta(\mathcal{A})$ the maximal distance between 
	an initial point and a divergent point of $\mathcal{A}$, and we use 
	the convention $\delta(\mathcal{A})=0$ when there is no divergent point in $\mathcal{A}$. We will also denote 
	$\mu(\mathcal{A})$ the number of divergent vertices which realize this maximum.
	\end{notation}
	
	Let us observe that for an acyclic directed graph $\mathcal{A}$, this graph possesses 
	a divergent point which is not initial if and only if $\delta(\mathcal{A})>0$. 

	\begin{definition}\label{definition.divergent.displace}
	Let us consider a directed acyclic graph $\mathcal{A}$ such that $\delta(\mathcal{A})>0$
	and $v$ a divergent 
	vertex realizing the maximum in the definition of $\delta(\mathcal{A})$. 
	We consider the acyclic 
	graph $\accentset{v}{\mathcal{A}}$ obtained 
	from $\mathcal{A}$ after the following modifications: (i) First replace the vertex $v$ 
	with the set of vertices $v \cap f^{-1}(w)$, where there is an edge pointing 
	from $v$ to $w$; (ii) Replace all the edges pointing to $v$ by a set of edges pointing 
	at the constructed vertices $v \cap f^{-1}(w)$
	according to the behaviour of the system $(X,f)$.
	\end{definition}
	
	Let us denote $\le_{\text{lex}}$ the lexicographic order on $\mathbb{R}^2$. The following 
	is straightforward:
	
	\begin{lemma}\label{lemma.lexical}
	With notations from Definition~\ref{definition.divergent.displace}, we have 
	that: 
	\[(\delta(\accentset{v}{\mathcal{A}}),\mu(\accentset{v}{\mathcal{A}})) {<}_{\text{lex}} (\delta(\mathcal{A}),\mu(\mathcal{A})).\]
	\end{lemma}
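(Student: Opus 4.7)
The plan is to analyze separately how $\delta$ and $\mu$ change when passing from $\mathcal{A}$ to $\accentset{v}{\mathcal{A}}$, and to conclude by a short case distinction on whether $v$ is the unique divergent vertex realizing the maximum distance.

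First, I would determine the divergent vertex set of $\accentset{v}{\mathcal{A}}$. By construction, every new vertex $v\cap f^{-1}(w)$ carries a single outgoing edge (namely, the one pointing at $w$), so it is not divergent. The construction modifies only the edges incident to $v$, so the outgoing edges of every vertex in $V(\mathcal{A})\setminus\{v\}$ are unchanged, and these vertices are divergent in $\accentset{v}{\mathcal{A}}$ exactly when they were in $\mathcal{A}$. Hence the divergent vertex set of $\accentset{v}{\mathcal{A}}$ is precisely that of $\mathcal{A}$ with $v$ removed.

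Second, I would analyze the distances from initial points to the remaining divergent vertices. The key observation uses the maximality of $v$: for any divergent $v'\ne v$ and any initial point $u$ with a path to $v'$, one has $d(u,v')\le \delta(\mathcal{A})$ by definition of $\delta$. If a shortest such path in $\mathcal{A}$ were to traverse $v$, one could decompose $d(u,v')=d(u,v)+d(v,v')$; combined with the fact that $v$ itself realizes $\delta(\mathcal{A})$, this forces the relevant shortest path to $v'$ to avoid $v$ in the extremal cases that contribute to $\delta(\accentset{v}{\mathcal{A}})$. Any path in $\mathcal{A}$ not passing through $v$ lifts verbatim to a path of the same length in $\accentset{v}{\mathcal{A}}$, so the distance from initials to each remaining divergent vertex stays bounded by its distance in $\mathcal{A}$, and in particular by $\delta(\mathcal{A})$.

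Finally, I would conclude by a two-case split. If $\mu(\mathcal{A})\ge 2$, then at least one divergent $v'\ne v$ at distance $\delta(\mathcal{A})$ survives, so $\delta(\accentset{v}{\mathcal{A}})=\delta(\mathcal{A})$ while $\mu(\accentset{v}{\mathcal{A}})\le \mu(\mathcal{A})-1$; if $\mu(\mathcal{A})=1$, then $v$ was the unique maximizer and the maximal distance among the remaining divergent vertices is strictly less than $\delta(\mathcal{A})$, yielding $\delta(\accentset{v}{\mathcal{A}})<\delta(\mathcal{A})$. In both cases the pair decreases in the lexicographic order. The main obstacle is the second step, where one has to do careful bookkeeping on how paths through $v$ are rerouted through the refined vertices $v\cap f^{-1}(w)$ in $\accentset{v}{\mathcal{A}}$, and verify that no such rerouting can inflate the shortest-path distance from an initial to a surviving divergent vertex beyond $\delta(\mathcal{A})$.
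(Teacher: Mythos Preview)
The paper gives no proof, declaring the lemma ``straightforward'', so your three-step outline is already more detailed than anything the authors provide. Steps 1 and 3 are fine: each refined vertex $v\cap f^{-1}(w)$ has a unique outgoing edge and is therefore non-divergent, and once step~2 is in hand the lexicographic drop follows by your case split on $\mu(\mathcal{A})$.

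Step 2 is where your argument is genuinely incomplete, and you are right to flag it as the main obstacle. The sentence ``any path in $\mathcal{A}$ not passing through $v$ lifts verbatim to a path of the same length in $\accentset{v}{\mathcal{A}}$'' only controls distances from vertices that were already initial in $\mathcal{A}$. It does not cover the possibility that some refined vertex $v_i=v\cap f^{-1}(w_i)$ becomes a \emph{new} initial vertex of $\accentset{v}{\mathcal{A}}$ (this occurs whenever no predecessor $b$ of $v$ in $\mathcal{A}$ satisfies $f(b)\cap v_i\neq\emptyset$). From such a $v_i$ the shortest path to a surviving divergent $v'$ has length $1+d_{\mathcal{A}}(w_i,v')$, and your decomposition $d(u,v')=d(u,v)+d(v,v')$ gives no control over this quantity: $v_i$ was not initial in $\mathcal{A}$, and $w_i$ need not be initial either, so the defining inequality $d(u,v')\le\delta(\mathcal{A})$ cannot be invoked directly. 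To close the gap you must either argue that in the paper's concrete setting $\mathcal{A}=\mathcal{A}(G(\mathcal{U}),\chi)$ such new initials cannot appear (or are harmless), or produce a direct bound $d_{\mathcal{A}}(w_i,v')\le\delta(\mathcal{A})-1$ from the maximality of $v$; neither is supplied, and the paper offers no hint which route is intended.
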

	
	\begin{notation}
	For an acyclic graph such that $\delta(\mathcal{A})>0$ and $\chi : V(\mathcal{A}) \rightarrow 
	\{0,*,\uparrow,\downarrow\}$, we denote $\accentset{v}{\chi} : V(\accentset{v}{\mathcal{A}})
	\rightarrow \{0,*,\uparrow,\downarrow\}$ such that for every vertex created 
	in the definition of $\accentset{v}{\mathcal{A}}$, the value of $\accentset{v}{\chi}$ 
	is equal to the value of $\chi$ on the vertex that they replace. On the other 
	vertices the function $\accentset{v}{\chi}$ coincides with $\chi$.
	\end{notation}
	
	\begin{notation}
	For simplicity, for a well-marked partition $(\mathcal{V},\tau,\chi)$, we will 
	denote $\delta(\mathcal{V},\chi)$ and $\mu(\mathcal{V},\chi)$ the respective numbers 
	$\delta(\mathcal{A}(G(\mathcal{V}),\chi))$ and
	$\mu(\mathcal{A}(G(\mathcal{V}),\chi))$.
	\end{notation}
	
	\subsubsection{Decreasing the value of $(\delta,\mu)$ for a well-marked partition}

	Let us consider a well-marked 
	partition $(\mathcal{U},\tau,\chi)$ and 
	$\mathcal{A}$ the acyclic graph $\mathcal{A}(G(\mathcal{U}),\chi)$, 
	and assume that $\delta(\mathcal{A})>0$. Let us denote $\chi_{\mathcal{A}}$ 
	the function $V(\mathcal{A}) \rightarrow \{\uparrow,\downarrow,0,*\}$ such that: 
	\begin{enumerate}[(i)]
	\item $\chi_{\mathcal{A}}$ is identical to $\chi$ on the vertices of 
	$\mathcal{I}(G(\mathcal{U}),\chi)$ on which $\chi$ is defined; 
	\item $\chi_A (c_u)=*$ whenever 
	$(u,c_u) \in \mathcal{S}(G(\mathcal{U}),\chi)$. 	
	\end{enumerate}
In other words we add mark $*$ to all newly created vertexes $c_u$.

	Consider $v$ a vertex which realizes the maximum 
	in the definition of $\delta(\mathcal{A})$. 
	Let us modify the graph $G(\mathcal{U})$ using $\accentset{v}{\mathcal{A}}$. Strictly speaking, we construct a new graph $G$ as follows:
	\begin{enumerate}
	\item for all 
	$(u,c_u) \in \mathcal{S}(G(\mathcal{U}),\chi)$, remove from $\accentset{v}{\mathcal{A}}$ the vertex $c_u$ from the graph and 
	change the edges pointing at $c_u$ so that they point at $u$, without changing their 
	origin (this is possible because divergent vertices are left unchanged in the transformation 
	of $\mathcal{A}$ into $\accentset{v}{\mathcal{A}}$). 
	\item add the vertices and edges of the set $\mathcal{R}(G(\mathcal{U}),\chi)$ (this operation 
	is possible since the origin vertices of these edges are unchanged by the transformation of
	$\mathcal{A}$ into $\accentset{v}{\mathcal{A}}$). 
	\end{enumerate}
In other words we glue back vertexes marked $*$ who were split before according to the relation $\mathcal{S}(G(\mathcal{U}),\chi)$.
We also have to define a modification of function $\chi$, since the set of vertices changed. Let $V$ denote the set 
of vertices of $G$.
Define $\chi' \colon V \rightarrow \{\uparrow,\downarrow,*,0\}$,  such that $\chi'$ coincide with $\accentset{v}{\chi_{\mathcal{A}}}$ on 
the graph obtained out of $\accentset{v}{\mathcal{A}}$ is step (1), and 
	with value $0$ on the vertices added in step (2).
	
	\begin{lemma}\label{lemma.descending.distance}
	There exists a partition $\mathcal{V}$ which refines $\mathcal{U}$ such 
	that $G(\mathcal{V})=G$. Moreover $(\mathcal{V},\tau,\chi')$ is well-marked and 
	for every vertex $u$ of $G(\mathcal{V})$, we have
	$\chi'(u) = \chi(\pi_{\mathcal{U}}^{\mathcal{V}}(u))$. Furthermore: 
	\[(\delta(\mathcal{V},\chi'),\mu(\mathcal{V},\chi')) <_{\text{lex}} (\delta(\mathcal{U},\chi),\mu(\mathcal{U},\chi)).\]
	\end{lemma}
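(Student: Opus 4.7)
The plan is to refine $\mathcal{U}$ only at the vertex $v$, obtaining $\mathcal{V}$, and then to verify each property by tracking the correspondence between the combinatorial modification $\mathcal{A}\mapsto \accentset{v}{\mathcal{A}}$ and the geometric refinement $\mathcal{U}\mapsto \mathcal{V}$. First I would observe that $v$ is necessarily a genuine vertex of $G(\mathcal{U})$, since copies $c_u$ have no outgoing edges and hence cannot be divergent, and that $v$ must lie in the supercyclical part of $(\mathcal{U},\tau)$ by property (S3) established in Section~\ref{section.attracted.operations}. Because $v$ is divergent and satisfies $\chi(v)\neq 0$, no outgoing edge of $v$ in $G(\mathcal{U})$ can land in $\mathcal{R}(G(\mathcal{U}),\chi)$, so the set of targets of $v$ in $\mathcal{A}$ coincides with its set of targets in $G(\mathcal{U})$. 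Consequently the family $\{v\cap f^{-1}(w):(v,w)\in E(G(\mathcal{U}))\}$ is a partition of $v$ into nonempty clopen subsets, and I let $\mathcal{V}$ be the partition obtained from $\mathcal{U}$ by replacing $v$ with this family.

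Next I would identify $G(\mathcal{V})$ with the graph $G$ of the statement: each piece $v\cap f^{-1}(w)$ has unique successor $w$; edges of $G(\mathcal{U})$ pointing into $v$ are redirected to those pieces containing their image points; all other vertices and edges are unchanged. This is exactly Definition~\ref{definition.divergent.displace} performed inside $G(\mathcal{U})$, and the two-step construction of $G$ (collapse $c_u$ onto $u$, then reinstate $\mathcal{R}(G(\mathcal{U}),\chi)$) precisely reverses the passages $G(\mathcal{U})\to\mathcal{I}\to\mathcal{A}$ on the unsplit part, yielding $G=G(\mathcal{V})$. The recipe $\chi'(u)=\chi(\pi_{\mathcal{U}}^{\mathcal{V}}(u))$ agrees with the stated prescription because $\chi_{\mathcal{A}}$ already assigns $*$ to both $u$ and $c_u$ when $\chi(u)=*$, while reinstated $\mathcal{R}$-vertices carry label $0$. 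Well-markedness of $(\mathcal{V},\tau,\chi')$ then follows: condition (1) holds since pieces of $v$ inherit $\chi(v)\in\{\uparrow,\downarrow,*\}$ and the attracted part is unchanged; condition (2) holds because any circuit $c$ in the supercyclical part of $G(\mathcal{V})$ projects via $\pi_{\mathcal{U}}^{\mathcal{V}}$ to a closed walk in the supercyclical part of $G(\mathcal{U})$, which decomposes into a concatenation of supercyclical circuits, each of which carries a marker and a potential by well-markedness of $(\mathcal{U},\chi)$, and these labels lift to $c$ because $\chi'=\chi\circ\pi_{\mathcal{U}}^{\mathcal{V}}$.

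For the lexicographic decrease, I would compare $\mathcal{A}(G(\mathcal{V}),\chi')$ with $\accentset{v}{\mathcal{A}}$. Since the attracted part has no divergent vertices and, by stability under $f$, never points back into the supercyclical part, the computation of $\delta$ and $\mu$ takes place entirely inside the supercyclical subgraph, with markers acting as initial vertices once their incoming edges are rerouted to their copies. That supercyclical subgraph, in both of the above acyclic graphs, is obtained from the supercyclical subgraph of $\mathcal{A}$ by splitting $v$ in exactly the same way; hence $(\delta(\mathcal{V},\chi'),\mu(\mathcal{V},\chi'))=(\delta(\accentset{v}{\mathcal{A}}),\mu(\accentset{v}{\mathcal{A}}))$, and Lemma~\ref{lemma.lexical} concludes. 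The most delicate point I expect is precisely this last identification: one must argue that the possible discrepancy between $\mathcal{R}(G(\mathcal{V}),\chi')$ and $\mathcal{R}(G(\mathcal{U}),\chi)$ — some attracted vertex may lose its only divergent supercyclical predecessor when $v$ is split — is invisible to $\delta$ and $\mu$, because attracted vertices never lie on paths from initial vertices to divergent ones.
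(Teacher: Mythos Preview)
Your proof is correct and follows essentially the same approach as the paper: build $\mathcal{V}$ by splitting the single vertex $v$, identify $G(\mathcal{V})$ with $G$, lift $\chi$ along $\pi_{\mathcal{U}}^{\mathcal{V}}$, deduce well-markedness by projecting circuits, and invoke Lemma~\ref{lemma.lexical}. One small correction: your appeal to (S3) to place $v$ in the supercyclical part is not available, since (S3) is not among the hypotheses of the lemma; the correct argument is the one you in fact give immediately afterwards, namely that any vertex with $\chi=0$ surviving in $\mathcal{I}(G(\mathcal{U}),\chi)$ has all its outgoing edges removed (they go to $\chi=0$ vertices by stability of the attracted part), hence cannot be divergent in $\mathcal{A}$.
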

	
	\begin{proof}
	Indeed the vertices created in the transformation of $\mathcal{A}$ into 
	$\accentset{v}{\mathcal{A}}$
	form a partition of the vertex 
	they replace. The collection of vertices of the graph $G$ 
	thus forms a finite clopen partition $\mathcal{V}$ of $X$ and 
	this partition refines $\mathcal{U}$. Hence $G=G(\mathcal{V})$. By 
	construction of $\accentset{v}{\chi_{\mathcal{A}}}$, we have 
	$\chi'(u) = \chi(\pi_{\mathcal{U}}^{\mathcal{V}}(u))$ for all $u$ vertex of $G$. 
	We have that $(\mathcal{V},\tau,\chi')$ is well-marked. This comes 
	from the two facts that the image 
	of every circuit of 
	$G(\mathcal{V})$ by $\pi_{\mathcal{U}}^{\mathcal{V}}$ contains at least one 
	circuit of $G(\mathcal{U})$, 
	and that for all $u$, $\chi'(u) = \chi(\pi_{\mathcal{U}}^{\mathcal{V}}(u))$. 
	The consequence of these facts is that every circuit in the supercyclical 
	part of $(\mathcal{V},\tau)$ contains at least one marker and one potential.
	The last part of the lemma is a direct consequence of Lemma~\ref{lemma.lexical}.
	\end{proof}
	
	\begin{theorem}\label{theorem.rectification}
	Let us consider a well marked partition $(\mathcal{U},\tau,\chi)$. 
	There exists another well marked partition $(\mathcal{V},\tau,\chi')$ such that $\mathcal{V}$ 
	refines $\mathcal{U}$ and such that divergent points of $\mathcal{V}$ are markers 
	for $(\mathcal{V},\tau,\chi')$ and all these markers are mapped to markers of 
	$(\mathcal{U},\tau,\chi)$ by $\pi_{\mathcal{U}}^{\mathcal{V}}$.
	\end{theorem}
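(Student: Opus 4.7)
The plan is to prove the theorem by a finite iteration of the refinement procedure of Lemma~\ref{lemma.descending.distance}, using the well-foundedness of $(\mathbb{N}^2, \le_{\text{lex}})$ to guarantee termination. I would first apply a $\kappa_n$-refinement to $\mathcal{U}$, extending $\chi$ naturally via Lemma~\ref{lemma.transmission.well.marked}, so that the starting partition $(\mathcal{V}_0, \tau, \chi_0)$ has no divergent vertex in its attracted part (using the observation recalled just before the theorem). Then, starting from $(\mathcal{V}_0, \tau, \chi_0)$, I repeatedly apply Lemma~\ref{lemma.descending.distance} to obtain a descending chain of well-marked refinements $(\mathcal{V}_i, \tau, \chi_i)$ satisfying $\chi_{i+1}(u) = \chi_i(\pi_{\mathcal{V}_i}^{\mathcal{V}_{i+1}}(u))$ and $(\delta(\mathcal{V}_{i+1}, \chi_{i+1}), \mu(\mathcal{V}_{i+1}, \chi_{i+1})) <_{\text{lex}} (\delta(\mathcal{V}_i, \chi_i), \mu(\mathcal{V}_i, \chi_i))$. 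By well-ordering this sequence terminates at some index $N$, and I set $\mathcal{V} := \mathcal{V}_N$ and $\chi' := \chi_N$.

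The theorem's conclusions then follow by a routine check. Transitivity of $\prec$ through the chain gives $\mathcal{V} \prec \mathcal{U}$, and composing the identities $\chi_{i+1}(u) = \chi_i(\pi_{\mathcal{V}_i}^{\mathcal{V}_{i+1}}(u))$ produces $\chi'(u) = \chi(\pi_{\mathcal{U}}^{\mathcal{V}}(u))$ for every $u$; so every marker of $(\mathcal{V}, \tau, \chi')$ is sent by $\pi_{\mathcal{U}}^{\mathcal{V}}$ to a marker of $(\mathcal{U}, \tau, \chi)$. The preliminary $\kappa$-refinement guarantees there are no divergent vertices in the attracted part of $\mathcal{V}$, so every divergent vertex of $G(\mathcal{V})$ lies in the supercyclical part; such a vertex is a divergent supercyclical vertex, so all its outgoing edges survive in $\mathcal{A}(G(\mathcal{V}), \chi')$, and it is divergent in $\mathcal{A}$ as well. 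At termination, $\delta(\mathcal{V}, \chi') = 0$ forces every divergent vertex of $\mathcal{A}$ to be initial, and the goal is to identify the initial divergent vertices with markers -- which is automatic for markers since their incoming edges are redirected to the copies $c_u$.

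The main obstacle is to establish rigorously that at termination every initial divergent vertex of $\mathcal{A}(G(\mathcal{V}), \chi')$ is indeed a marker. A non-marker supercyclical vertex could a priori be initial in $\mathcal{A}$ if it has no incoming edge in $\mathcal{I}(G(\mathcal{V}), \chi')$ at all, and one must either rule this out using the well-marked structure -- every circuit in the supercyclical part contains a marker, imposing enough recurrence to exclude isolated source-like supercyclical vertices -- or extend Lemma~\ref{lemma.descending.distance} to continue splitting beyond $\delta = 0$, noting that the splitting of Definition~\ref{definition.divergent.displace} applied to \emph{any} divergent vertex, even an initial one, partitions it into pieces each carrying a unique outgoing edge and hence strictly reduces $\mu$. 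Either approach preserves the well-founded descent on $(\mathbb{N}^2, \le_{\text{lex}})$, so the iteration halts with every divergent vertex of $\mathcal{V}$ being a marker.
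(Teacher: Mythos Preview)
Your approach is essentially the paper's: apply a $\kappa$-refinement so the attracted part has no divergent vertices, iterate Lemma~\ref{lemma.descending.distance} until $(\delta,\mu)$ cannot decrease, and then deal with the residual initial divergent vertices. The paper resolves the obstacle you flag by your \emph{second} option: once $\delta=0$ it performs one more explicit splitting, replacing each remaining (initial, non-marker) divergent vertex by the pieces $v\cap f^{-1}(w)$ indexed by its outgoing edges, and transports $\chi$ to the pieces; this yields the final $(\mathcal{V},\tau,\chi')$.

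Your \emph{first} option, however, does not go through. The well-marked condition only says that every circuit in the supercyclical part contains a marker and a potential; it says nothing about source vertices of the supercyclical subgraph. A supercyclical vertex $u$ with $\chi(u)\in\{\uparrow,\downarrow\}$ that happens to have no incoming edge in $G(\mathcal{V})$ (such vertices exist whenever $f$ is not surjective, and can also be created by the successive splittings) is initial in $\mathcal{A}(G(\mathcal{V}),\chi')$ and may well be divergent, yet is not a marker. So you cannot conclude ``initial divergent $\Rightarrow$ marker'' from the well-marked structure alone; you must perform the extra splitting. With that choice made, your argument coincides with the paper's.
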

	
	\begin{proof}
	
	Let us construct a sequence of well-marked partitions $(\mathcal{U}_m,\tau,\chi_m)_{m \ge 0}$ 
	recursively as follows: 
	
	\begin{enumerate}
	\item  the first element of this sequence is given by
	$\mathcal{U}_0 = \kappa_1(\mathcal{U})$, which refines $\mathcal{U}$ and $\chi_0 : 
	\mathcal{U}_0 \rightarrow \{\uparrow,0,*,\downarrow\}$ 
	which coincides with $\chi$ on the supercyclical part of $\mathcal{U}_0$ and takes 
	constant value $0$ on the attracted part of $\mathcal{U}_0$. We have directly that 
	$(\mathcal{U}_0,\tau,\chi_0)$ is well marked. 
	\item for all $m \ge 0$, if 
	$\delta(\mathcal{U}_m,\chi_m)>0$, $(\mathcal{U}_{m+1},\chi_{m+1})$ is 
	the well-marked partition obtained from $(\mathcal{U}_m,\tau,\chi_m)$ 
	by Lemma~\ref{lemma.descending.distance}. In this case we have that 
	\[(\delta(\mathcal{U}_{m+1},\chi_{m+1}),\mu(\mathcal{U}_{m+1},\chi_{m+1}))<_{\text{lex}} 
	(\delta(\mathcal{U}_{m},\chi_m),\mu(\mathcal{U}_{m},\chi_m)).\] Otherwise set
	$(\mathcal{U}_{m+1},\tau,\chi_{m+1})$ equal to $(\mathcal{U}_{m},\tau,\chi_{m})$. By this construction for all $m$, $\mathcal{U}_{m+1}$ refines $\mathcal{U}_{m}$ and 
	$(\mathcal{U}_{m},\tau,\chi_{m})$ is well marked.
	\end{enumerate}

	By infinite descent argument, 
	there exists some $m_0 \ge 0$ (minimal) such that 
	$\delta(\mathcal{U}_{m_0},\chi_{m_0})=0$. This means that every divergent 
	vertex in the graph of $\mathcal{U}_{m_0}$ is initial in this graph 
	or is a marker. By refining one more time we can remove the initial divergent
	vertices by splitting them in as many vertices as outgoing arrows. We denote $\mathcal{V}$ the obtained partition, and $\chi' : V(\mathcal{V}) \rightarrow \{\uparrow,\downarrow,0,*\}$ obtained by attributing, 
	for each splitted vertex, its value for $\chi_{m_0}$ to the vertices 
	introduced after splitting.
	
	The marked partition $(\mathcal{V},\tau,\chi')$ is well-marked (indeed $(\mathcal{U}_{m_0},\tau,\chi_{m_0})$ is well-marked and the construction of $(\mathcal{V},\tau,\chi')$ from 
	it does not modify the circuits)
	and $\mathcal{V} \prec \mathcal{U}$. By construction 
	every divergent vertex of the graph $G(\mathcal{V})$ is a marker 
	of $(\mathcal{V},\tau,\chi')$. Moreover Lemma~\ref{lemma.descending.distance} implies that 
	for each vertex $u$ of $G(\mathcal{V})$,
	
	\[\chi'(u)  = \chi \left( \pi_{\mathcal{U}}^{\mathcal{U}_{0}}  \circ \pi_{\mathcal{U}_{0}}^{\mathcal{U}_{1}} \circ \hdots  \circ \pi_{\mathcal{U}_{m_0-1}}^{\mathcal{U}_{m_0}} \circ \pi_{\mathcal{U}_{m_0}}^{\mathcal{V}}(u)\right) = \chi( \pi_{\mathcal{U}}^{\mathcal{V}}(u)).\] 
This implies that markers of $(\mathcal{V},\tau,\chi')$ are mapped to markers 
	of $(\mathcal{U},\tau,\chi)$ by $\pi_{\mathcal{U}}^{\mathcal{V}}$. This concludes 
	the proof. 
	\end{proof}

The partition obtained by the above theorem has the following property:
\begin{enumerate}
	\item[(S4)] 
	the partition is well marked and all divergent vertexes are markers.
\end{enumerate}

	\begin{theorem}\label{theorem.partition.sequence}
	The system $(X,f)$ admits a sequence of finite clopen partitions 
	$\mathbb{U}^{*}=(\mathcal{U}_n)_{n \ge 1}$ such that 
	$\lim_{n\to\infty} \mesh(\mathcal{U}_n) = 0$ and for all $n \ge 1$ and for 
	all $u \in \mathcal{U}_n$, $u$ is equal to the set $\mathcal{E}(\{v \in \mathcal{U}_{n+1} : v \subset u\})$, 
	and a sequence of functions 
	$\chi_n : \mathcal{U}_n \rightarrow \{\downarrow,0,*,\uparrow\}$ and 
	a non-decreasing sequence of integers $\tau_n \ge n$ such that 
	for all $n \ge 0$, $(\mathcal{U}_n,\tau_n,\chi_n)$ is a well-marked partition, 
	and $(\mathcal{U}_{n+1},\tau_{n+1},\chi_{n+1})$ is well marked relatively to 
	$(\mathcal{U}_{n},\tau_n,\chi_{n})$. Moreover every divergent point of $\mathcal{U}_{n}$
	is a marker of $(\mathcal{U}_{n},\tau_n,\chi_n)$.
	\end{theorem}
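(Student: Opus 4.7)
The plan is to build the sequence $(\mathcal{U}_n,\tau_n,\chi_n)_{n\ge 1}$ inductively, alternating at each stage between the refinement step provided by Lemma~\ref{lemma.well.marked} and the rectification step provided by Theorem~\ref{theorem.rectification}. Lemma~\ref{lemma.existence.well.marked} supplies the base case: a well-marked partition $(\mathcal{U}_1,\tau_1,\chi_1)$ refining $\mathcal{U}_1^0$, which we may additionally pass through Theorem~\ref{theorem.rectification} so that its divergent vertices are markers, and through Lemma~\ref{lemma.augmenting.order} if needed to ensure $\tau_1\ge 1$ (if there are only finitely many periodic orbits we may take $\tau_1=+\infty$).

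For the inductive step, assume $(\mathcal{U}_n,\tau_n,\chi_n)$ is a well-marked partition whose divergent vertices are all markers. I first apply Lemma~\ref{lemma.well.marked}, with the integer parameter chosen large enough so that the produced partition refines $\mathcal{U}_{n+1}^0$; this yields a well-marked partition $(\mathcal{V},\tau',\chi^\flat)$ which is well-marked relatively to $(\mathcal{U}_n,\tau_n,\chi_n)$, with $\tau'\ge \tau_n+1\ge n+1$. I then apply Theorem~\ref{theorem.rectification} to $(\mathcal{V},\tau',\chi^\flat)$ to obtain a further refinement $(\mathcal{U}_{n+1},\tau_{n+1},\chi_{n+1})$ with $\tau_{n+1}=\tau'$, such that every divergent vertex of $G(\mathcal{U}_{n+1})$ is a marker and such that these new markers are mapped to markers of $(\mathcal{V},\tau',\chi^\flat)$ by $\pi_{\mathcal{V}}^{\mathcal{U}_{n+1}}$.

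It then remains to verify the three compatibility conditions. The mesh condition $\lim\mesh(\mathcal{U}_n)=0$ follows automatically because $\mathcal{U}_n\prec\mathcal{U}_n^0$ and $\mesh(\mathcal{U}_n^0)\to 0$ by \eqref{eq:star}. For the relative well-marking of $(\mathcal{U}_{n+1},\tau_{n+1},\chi_{n+1})$ over $(\mathcal{U}_n,\tau_n,\chi_n)$, the key identity is Lemma~\ref{lemma.descending.distance}, which gives $\chi_{n+1}(u)=\chi^\flat(\pi_{\mathcal{V}}^{\mathcal{U}_{n+1}}(u))$ for every vertex $u$ in $G(\mathcal{U}_{n+1})$. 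Using $\pi_{\mathcal{U}_n}^{\mathcal{U}_{n+1}}=\pi_{\mathcal{U}_n}^{\mathcal{V}}\circ\pi_{\mathcal{V}}^{\mathcal{U}_{n+1}}$, the word $\chi_{n+1}(u)\,\chi_n(\pi_{\mathcal{U}_n}^{\mathcal{U}_{n+1}}(u))$ coincides with $\chi^\flat(v)\,\chi_n(\pi_{\mathcal{U}_n}^{\mathcal{V}}(v))$ for $v=\pi_{\mathcal{V}}^{\mathcal{U}_{n+1}}(u)$, which lies in the admissible set $\{*\uparrow,\uparrow\uparrow,\downarrow\uparrow,\downarrow*,\downarrow\downarrow\}$ by the relative well-marking of $(\mathcal{V},\tau',\chi^\flat)$ over $(\mathcal{U}_n,\tau_n,\chi_n)$. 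Finally $\tau_{n+1}\ge n+1$ follows because $\tau_{n+1}=\tau'\ge\tau_n+1$.

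The main delicate point I expect is the interplay between the rectification step and the relative well-marking property: rectification introduces new refinements on top of $\mathcal{V}$ and we must check that the pull-back formula $\chi_{n+1}(u)=\chi^\flat(\pi_{\mathcal{V}}^{\mathcal{U}_{n+1}}(u))$ is genuinely preserved through the entire infinite-descent procedure on $(\delta,\mu)$ inside Theorem~\ref{theorem.rectification}, as well as that the rectification does not destroy the supercyclical/attracted separation of $\mathcal{V}$ (this is ensured because rectification only splits vertices according to the dynamics of $f$ and preserves the attracted part of the partition). Once this is verified, the statement follows directly from the inductive construction.
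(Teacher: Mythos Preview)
Your proposal is correct and follows essentially the same inductive scheme as the paper: obtain the base case from Lemma~\ref{lemma.existence.well.marked}, then at each stage apply Lemma~\ref{lemma.well.marked} (with parameter $n+1$) followed by Theorem~\ref{theorem.rectification}. You in fact make explicit the check that relative well-marking survives rectification via the pull-back identity $\chi'(u)=\chi(\pi_{\mathcal{U}}^{\mathcal{V}}(u))$; note only that this identity is established at the end of the proof of Theorem~\ref{theorem.rectification} (by iterating Lemma~\ref{lemma.descending.distance} and handling the $\kappa_1$ and initial-vertex-splitting steps), not by Lemma~\ref{lemma.descending.distance} alone.
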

	
	\begin{proof}
	This result can be derived from all the previous steps leading to conditions (S1)-(S4)
	together with keeping that each of the partition is well marked with respect to the previous one.
	Formally, it is obtained as follows.
	
	First we use Lemma~\ref{lemma.existence.well.marked} to get a well marked partition 
	for $(X,f)$, and define $(\mathcal{U}_1,\tau_1,\chi_1)$ to be this partition. Let us assume 
	that we have constructed $(\mathcal{U}_k,\tau_k,\chi_k)$ for all $k \le n$ for some $n\ge 1$. 
	We then apply Theorem~\ref{theorem.rectification} on the well-marked partition obtained 
	from Lemma~\ref{lemma.well.marked} with integer $n+1$, 
	and set $(\mathcal{U}_{n+1},\tau_{n+1},\chi_{n+1})$ 
	to be the obtained marked partition. By construction this partition is well-marked and
	well-marked relatively to $(\mathcal{U}_n,\tau_n,\chi_n)$, and $\tau_{n+1} \ge \tau_n +1$, 
	which implies that $\tau_n \ge n$ for all $n$. Also every divergent 
	point of $\mathcal{U}_{n+1}$ is a marker of $(\mathcal{U}_{n+1},\tau_{n+1},\chi_{n+1})$. Moreover 
	we have that for all $n \ge 1$, $\mathcal{U}_n \prec \mathcal{U}_{n}^0$ and 
	$\mathcal{U}_{n+1} \prec \mathcal{U}_{n}$, which imples that $\mathbb{U}^{*}$ 
	satifies the condition \eqref{eq:star}.
	\end{proof}
	
	A direct consequence of properties of $\mathbb{U}^{*}$ constructed in the proof 
	of Theorem~\ref{theorem.partition.sequence} is that for every 
	$\textbf{v} \in V_{\mathbb{U}^{*},f}$, there exists at most one integer $n \ge 1$ 
	such that $\chi_n(\textbf{v}_n)=*$. Moreover if $\textbf{v}_n$ is divergent in 
	$G(\mathcal{U}_n)$, then $\chi_n(\textbf{v}_n)=*$. These two properties 
will be crucial in the proof of main theorem.

\section{On the embedding problem : proof of Theorem~\ref{theorem.main}\label{section.proof}}

In this section we will provide a proof of Theorem~\ref{theorem.main}, that we recall here
(the notion of attracting finite orbit is provided 
in Definition~\ref{definition.attracting.orbit}):

\begin{T1}
Any Cantor dynamical system $(X,f)$ can be embedded in the interval
$[0,1]$ with vanishing derivative if and only if it is purely attracting.
\end{T1}

The implication $(\Leftarrow)$ builds on all previous construction and its proof is presented in Section~\ref{section.sufficient}. 
The converse is an immediate consequence
of the following simple observations.
	
\begin{lemma}\label{lemma.derivative}
		Let us assume that $X$ is a Cantor set and $f$ is differentiable on $X$. Any finite orbit $p$ of the system $(X,f)$ such that for all $x \in p$, $f'(x)=0$ is attracting.
	\end{lemma}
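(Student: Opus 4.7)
The plan is to convert the vanishing derivative at each point of $p$ into a genuine local contraction on $X$, and then invoke zero-dimensionality of $X$ to upgrade the resulting open attracting neighborhood to a clopen one, as required by Definition~\ref{definition.attracting.orbit}.

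First I would write $p=\{p_0,\ldots,p_{k-1}\}$ with $f(p_i)=p_{i+1\bmod k}$. The hypothesis $f'(p_i)=0$ on the subset $X$, applied with Lipschitz constant $\tfrac12$, furnishes a $\delta_i>0$ such that
\[
|f(y)-p_{i+1\bmod k}|\le \tfrac{1}{2}|y-p_i|
\]
for every $y\in X$ with $|y-p_i|<\delta_i$. Setting $\delta:=\min_i\delta_i$ and shrinking further if necessary, I may also assume that the open balls $B(p_i,\delta)$ are pairwise disjoint.

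Next I would construct the attracting neighborhood by a zero-dimensional sandwich. For each $i$, the set $\overline{B(p_i,\delta/4)}\cap X$ is closed in $X$ and is contained in the open set $B(p_i,\delta/2)\cap X$; since clopens form a base for the topology of $X$, covering this closed set by clopens lying inside the open one and then taking a finite union produces a clopen $W_i$ with
\[
\overline{B(p_i,\delta/4)}\cap X \subset W_i \subset B(p_i,\delta/2)\cap X.
\]
Disjointness of the enclosing balls forces the $W_i$ to be pairwise disjoint, so $u:=\bigcup_{i=0}^{k-1}W_i$ is a clopen neighborhood of $p$.

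I would then verify both conditions of Definition~\ref{definition.attracting.orbit}. For $y\in W_i$ one has $|y-p_i|<\delta/2$, so the contraction gives $|f(y)-p_{i+1\bmod k}|\le\delta/4$, forcing $f(y)\in\overline{B(p_{i+1\bmod k},\delta/4)}\cap X\subset W_{i+1\bmod k}$; summing over $i$ yields $f(u)\subset u$. Iterating the contraction, $|f^n(y)-p_{(i+n)\bmod k}|\le \delta/2^{n+1}$ for $y\in W_i$, hence $f^n(u)\subset\bigcup_j B(p_j,\delta/2^{n+1})$, so every point of $\bigcap_{n\ge 0}f^n(u)$ must coincide with some $p_j$; combined with the reverse inclusion (since $p$ is periodic and contained in $u$), this gives $\bigcap_{n\ge 0}f^n(u)=p$. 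There is no deep obstacle here: the only genuine subtlety is that differentiability by itself produces merely an \emph{open} contracting neighborhood, and the argument must exploit zero-dimensionality of $X$ to replace it by a clopen one, which is exactly the role played by the sandwich between $\overline{B(p_i,\delta/4)}\cap X$ and $B(p_i,\delta/2)\cap X$.
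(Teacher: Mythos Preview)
Your proof is correct. The paper itself states Lemma~\ref{lemma.derivative} without proof, treating it as an elementary observation; your argument supplies exactly the straightforward verification one would expect---using $f'(p_i)=0$ to obtain a local $\tfrac12$-contraction, and then exploiting zero-dimensionality to pass from open balls to a clopen stable neighborhood.
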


\begin{cor}\label{cor:attracting}
If a Cantor dynamical system $(X,f)$ can be embedded in the real line with vanishing derivative then it is purely attracting.
\end{cor}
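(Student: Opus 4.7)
The plan is to reduce Corollary~\ref{cor:attracting} directly to Lemma~\ref{lemma.derivative} via the given conjugacy, noting that ``purely attracting'' is a topological (in fact, conjugacy-invariant) property.

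First I would unpack the hypothesis: by the definition of embedding in the real line with vanishing derivative, we have a differentiable $g\colon \R\to\R$, a closed $g$-invariant set $Z\subset\R$, and a homeomorphism $\psi\colon X\to Z$ conjugating $f$ to $g|_Z$, with $g'|_Z\equiv 0$. In particular $g$ is differentiable at every point of $Z$, so the system $(Z,g|_Z)$ satisfies the hypotheses of Lemma~\ref{lemma.derivative}: for any finite orbit $q$ of $g|_Z$ and any $z\in q$ we have $g'(z)=0$. Note that $Z$ is a Cantor set because $\psi$ is a homeomorphism from a Cantor set.

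Next I would invoke Lemma~\ref{lemma.derivative} to conclude that every finite orbit $q$ of $(Z,g|_Z)$ is attracting, i.e.\ for each such $q$ there exists a clopen (in $Z$) neighborhood $V\supset q$ with $g(V)\subset V$ and $\bigcap_{n\ge 0} g^n(V)=q$. Now I would transport this back to $(X,f)$ through $\psi$. Given a finite orbit $p$ of $f$, the set $q=\psi(p)$ is a finite orbit of $g|_Z$ with $|q|=|p|$, and if $V$ is a clopen attracting neighborhood of $q$ in $Z$, then $U:=\psi^{-1}(V)$ is a clopen subset of $X$ containing $p$. The conjugacy relation $g|_Z\circ\psi=\psi\circ f$ gives $f(U)=\psi^{-1}(g(V))\subset \psi^{-1}(V)=U$, so $U$ is stable, and
\[
\bigcap_{n\ge 0} f^n(U)=\bigcap_{n\ge 0}\psi^{-1}(g^n(V))=\psi^{-1}\!\Bigl(\bigcap_{n\ge 0} g^n(V)\Bigr)=\psi^{-1}(q)=p,
\]
so $p$ is attracting in the sense of Definition~\ref{definition.attracting.orbit}. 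Hence $(X,f)$ is purely attracting.

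There is essentially no obstacle here: the content of the converse direction is entirely absorbed into Lemma~\ref{lemma.derivative}, and the only verification needed is the routine check that conjugation by a homeomorphism preserves being an attracting finite orbit, which amounts to the commutation of intersections with the continuous bijection $\psi^{-1}$ displayed above.
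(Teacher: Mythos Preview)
Your proof is correct and follows essentially the same approach as the paper: apply Lemma~\ref{lemma.derivative} to the embedded system $(Z,g|_Z)$ and then transport the conclusion back through the conjugacy $\psi$. You simply spell out in detail the verification that attracting finite orbits are preserved under conjugacy, which the paper asserts in one sentence.
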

	
\begin{proof}
Assume that $(X,f)$ can be embedded in the real line with vanishing 
derivatie. This means that there exist $Z\subset \mathbb{R}$, $g\colon Z\to Z$ with 
$g' \equiv 0$ and $\psi \colon X \rightarrow Z$ an embedding which conjugates $(Z,g)$ and $(X,f)$. Since $g' \equiv 0$, Lemma~\ref{lemma.derivative} implies that all periodic orbits of $g$ are attracting. Attracting orbits are preserved under conjugacy, thus $(X,f)$ is purely attracting, which completes the proof.
\end{proof}

\subsection{Sufficiency of the condition\label{section.sufficient}}

The embedding required in Theorem~\ref{theorem.main} will be a consequence of the following theorem obtained first in \cite{Jarnik} (see \cite{CiCi} for a contemporary proof of this result). It will allow us to reduce
the problem to construction of a metric on Cantor set.
		
		\begin{theorem}[Jarn\'\i{}k]\label{theorem.jarnik}
			Let $X \subset \mathbb{R}$ be a perfect set and $f \colon X \rightarrow \mathbb{R}$ differentiable.
			Then there exists a differentiable extension $\tau \colon \mathbb{R} \rightarrow \mathbb{R}$ of $f$.
		\end{theorem}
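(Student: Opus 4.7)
The plan is to prove Jarn\'\i{}k's theorem by an explicit piecewise construction on the complementary intervals of $X$ in $\mathbb{R}$, together with a careful verification of differentiability at points of $X$. Since $X$ is closed, $\mathbb{R}\setminus X$ decomposes as a countable disjoint union of maximal open intervals $I_n$. For each bounded component $I_n=(a_n,b_n)$ with $a_n,b_n\in X$, define $\tau$ on $[a_n,b_n]$ as the unique cubic Hermite interpolant matching
\[
\tau(a_n)=f(a_n),\ \tau(b_n)=f(b_n),\ \tau'(a_n)=f'(a_n),\ \tau'(b_n)=f'(b_n).
\]
For the at most two unbounded complementary components, extend $f$ affinely off the finite endpoint with the prescribed value and derivative. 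Setting $\tau|_X=f$ yields a candidate function $\tau\colon\mathbb{R}\to\mathbb{R}$.

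Next I would check differentiability of $\tau$. On the interior of every complementary interval this is automatic, since $\tau$ is polynomial (or affine) there and the one-sided derivatives at the endpoints coincide with $f'(a_n),f'(b_n)$ by construction. For a point $x_0\in X$, differentiability along sequences staying in $X$ follows directly from the hypothesis. The only remaining case is a sequence $x_k\to x_0$ with each $x_k$ lying in some complementary interval $(a_k',b_k')$; if the intervals stabilise to a single one touching $x_0$, Hermite matching at that endpoint handles it, so the true content is the situation where $(a_k',b_k')$ are distinct intervals with $a_k',b_k'\to x_0$.

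The main obstacle is to show in this last case that
\[
\frac{\tau(x_k)-f(x_0)}{x_k-x_0}\longrightarrow f'(x_0).
\]
Here the differentiability of $f$ on $X$ gives
\[
f(a_k')=f(x_0)+f'(x_0)(a_k'-x_0)+o(a_k'-x_0),
\]
and the analogous expansion at $b_k'$, together with convergence of the secant slopes $s_k=(f(b_k')-f(a_k'))/(b_k'-a_k')$ to $f'(x_0)$. The delicate point is controlling the endpoint derivatives $f'(a_k'),f'(b_k')$ that feed into the Hermite cubic: they need not converge to $f'(x_0)$ in general, but the cubic Hermite interpolant only depends on them through its deviation from the secant line, and this deviation is bounded on $[a_k',b_k']$ by a multiple of $(b_k'-a_k')\cdot\max(|f'(a_k')-s_k|,|f'(b_k')-s_k|)$. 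The heart of the argument is therefore to extract a quantitative version of differentiability of $f$ on $X$ which estimates $|f'(a_k')-s_k|$ and $|f'(b_k')-s_k|$ in terms of the separation between $a_k'$, $b_k'$ and $x_0$, so that the oscillation of $\tau$ above the affine interpolant on $[a_k',b_k']$ becomes negligible compared to $|x_k-x_0|$.

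Assuming this estimate, combining it with the expansions above yields the desired limit $f'(x_0)$, so $\tau$ is differentiable at every $x_0\in X$ with $\tau'(x_0)=f'(x_0)$. Together with differentiability on $\mathbb{R}\setminus X$, this gives a differentiable extension $\tau\colon\mathbb{R}\to\mathbb{R}$ of $f$, completing the proof. The main technical step, and the one on which I would spend the most care, is the quantitative interpolation estimate controlling the Hermite cubic on shrinking complementary intervals accumulating at a point of $X$.
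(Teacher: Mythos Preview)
The paper does not prove this theorem; it is quoted with references to Jarn\'\i{}k's 1923 note and to \cite{CiCi}, and then used as a black box. So there is no in-paper argument to compare against, and I assess your proposal on its own.

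Your outline --- fill each complementary interval and then check differentiability at points of $X$, the only real content being the case of shrinking gaps accumulating at $x_0$ --- is correct, and you have located exactly the right difficulty. But the resolution you propose cannot work: the plain Hermite cubic does \emph{not} always yield a differentiable extension, and no ``quantitative version of differentiability of $f$ on $X$'' will save it, because differentiability of $f$ at $x_0$ constrains the \emph{values} $f(a_k')$ near $x_0$ but places no constraint whatsoever on the \emph{derivatives} $f'(a_k')$.

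Here is a concrete obstruction. Take $x_0=0$; for large $k$ put $a_k=2^{-k}$, $b_k=a_k+4^{-k}$, and let $X=\{0\}\cup\bigcup_k(C_k^a\cup C_k^b)$ with $C_k^a\subset[a_k-\delta_k,a_k]$, $C_k^b\subset[b_k,b_k+\delta_k]$ small Cantor sets and $\delta_k=o(4^{-k})$ chosen so that the pieces are disjoint. Set $f\equiv 0$ on $\{0\}\cup\bigcup_k C_k^b$ and $f(y)=2^k(y-a_k)$ on $C_k^a$. Then $f$ is differentiable everywhere on $X$ with $f'(0)=0$ (on $C_k^a$ one has $|f|\le 2^k\delta_k=o(2^{-k})$ while $|y|\sim 2^{-k}$), and $f'(a_k)=2^k$, $f'(b_k)=0$. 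On the gap $(a_k,b_k)$ the Hermite data are $f(a_k)=f(b_k)=0$, $f'(a_k)=2^k$, $f'(b_k)=0$, so the cubic equals $2^{-k}\,t(1-t)^2$ with $t=(x-a_k)/4^{-k}$. At $t=\tfrac13$ this gives $\tau(x_k)=\tfrac{4}{27}\,2^{-k}$ with $x_k\sim 2^{-k}$, hence
\[
\frac{\tau(x_k)-\tau(0)}{x_k-0}\longrightarrow \frac{4}{27}\neq 0=f'(0),
\]
and the Hermite extension fails to be differentiable at $0$. In this example $h_k\,|f'(a_k)-s_k|=4^{-k}\cdot 2^k=2^{-k}$ is of the \emph{same} order as $|x_k-x_0|$, so the estimate you plan to prove --- that this product is $o(|x_k-x_0|)$ --- is simply false.

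The proofs in the cited references avoid this by not letting a large $f'(a)$ produce a bump across the whole gap. One matches $f$ and $f'$ at the endpoints but confines the departure from the secant to a neighbourhood of each endpoint whose width is chosen \emph{depending on} $|f'(a)|$ (and on an enumeration of the gaps), so that in the convex-combination identity
\[
\frac{\tau(x)-f(x_0)}{x-x_0}=\frac{x-a}{x-x_0}\cdot\frac{\tau(x)-f(a)}{x-a}+\frac{a-x_0}{x-x_0}\cdot\frac{f(a)-f(x_0)}{a-x_0}
\]
the first factor, which alone sees $f'(a)$, carries a coefficient that one can force to absorb its growth. The Hermite cubic does the opposite --- a large $f'(a)$ generates a bump of height $\sim h|f'(a)-s|$ in the \emph{middle} of the gap --- and that is exactly what the counterexample exploits.
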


Set a purely attracting Cantor dynamical system $(X,f)$. 
Let us consider $\mathbb{U}^{*}=(\mathcal{U}_n)_{n\ge 1}$ and $(\chi_n)_{n \ge 1}$
and $(\tau_n)_{n \ge 1}$ 
obtained with Theorem~\ref{theorem.partition.sequence} for this system. Let us also 
consider $(\textbf{G},\pi)$ the graph covering representation which corresponds 
to $\mathbb{U}^{*}$ for $(X,f)$. 

For each $n \ge 1$, we define an acyclic graph $A_n$ obtained from $G_n=G(\mathcal{U}_n)$ by: 
\begin{enumerate}[(i)]
\item removing edges that are pointing at a marker (a vertex $u$ of $G_n$ such that 
$\chi_n(u)=*$); 
\item removing the vertices in circuits included in the attracted part of $(\mathcal{U}_n,\tau_n)$
and edges pointing at any of these vertices.
\end{enumerate}

We also fix a total order on every set $\pi_n^{-1}(v)$ for $v \in V_{n}$ 
and $n \ge 1$, where $V_n$ is the vertex set of $G_n$ and $\pi_n\colon V_{n+1}\to V_n$ is associated bonding map. 

\subsubsection{Contraction rates} 

Fix $n \ge 1$ and 
a finite orbit $p$ in the attracted part of $\mathcal{U}_n$. By Lemma~\ref{lemma.graph.supercyclical}, for all $u \in \leftidx{^p}{\mathcal{U}_n}$, there is a unique path in $G(\mathcal{U}_n)$ starting from $u$ and ending 
on an element of the circuit corresponding to $p$. We will denote by $\delta_n (u)$ the number of edges in this path.

The attracted part of $(\mathcal{U}_{n},\tau_n)$ contains the attracted part of 
$(\mathcal{U}_{n-1},\tau_{n-1})$. Indeed, let us remind that the attracted part 
of $(\mathcal{U}_{n},\tau_n)$ is the union of the sets $\leftidx{_*^p}{\mathcal{U}_{n}}$ for $|p| \le \tau_n$. Since $\tau_n \ge \tau_{n-1} +1$, it contains 
in particular the sets $\leftidx{_*^p}{\mathcal{U}_{n}}$ for $|p| \le \tau_{n-1}$.
Since $\mathcal{U}_{n}$ refines $\mathcal{U}_{n-1}$, for $|p| \le \tau_{n-1}$, 
$\leftidx{_*^p}{\mathcal{U}_{n-1}} \subset \leftidx{_*^p}{\mathcal{U}_{n}}$. 
This implies that the attracted part of $\mathcal{U}_{n-1}$ is contained in the 
attracted part of $\mathcal{U}_{n}$.

As a consequence every preimage by $\pi_{n-1}$ of a vertex in the circuit 
corresponding to a finite orbit $p$ in the attracted part of $(\mathcal{U}_{n-1},\tau_{n-1})$ 
is contained in the attracted part of 
$(\mathcal{U}_{n},\tau_n)$. Thus $\delta_{n}$ is defined for these vertices. 
We will denote by $\omega_{n}(p)$ the maximum of $\delta_{n}$ on $\leftidx{^p}{\mathcal{U}_n}$.

For all $n \ge 1$ we define the shrinking rate $\lambda_n \in (0,1)$ by: 
\[\lambda_n = \frac{1}{2^n} \cdot \frac{1}{|V_{n+1}|+1}\] 
Additionally, we define an auxiliary length $\epsilon_n$ by putting $\epsilon_1 = 1$ and then inductively for all $n \ge 1$: 
\[\epsilon_{n+1} = \frac{1}{4 \cdot 2^{n |E_n|}} \cdot \frac{\lambda_n^{|E_{n}|}}{|V_{n+1}|+1} \cdot \epsilon_n\]

\subsubsection{From the graph representation to interval partitions\label{section.setting}}

In the following, we will denote by $\ell(I)$ the length of a compact interval $I$ and by $\rho(I)$ its middle point. Let us notice that the data of $\ell(I)$ together with $\rho(I)$ determines 
completely the interval $I$. 

We construct a function $\iota : V_{\textbf{G},\pi} \rightarrow \mathcal{I}^{\mathbb{N}_{+}}$, 
where $\mathcal{I}$ is the set of compact intervals of $\mathbb{R}$, by defining functions $\iota_n : V_n \rightarrow \mathcal{I}$ such that $\iota(\textbf{v})_n = \iota_n(\textbf{v}_n)$
for all $n \ge 1$ and such that for each $v \in V_{n+1}$ we have inclusion $\iota_{n+1}(v) \subset \iota_n(\pi_n(v))$. 
We define this sequence of functions recursively. 
In order to define $\iota_n$ for all $n \ge 1$, we 
define the length $\ell(\iota_{n}(v))$ for all $v$ and then the middle point $\rho(\iota_{n}(v))$.

If $v$ is a vertex of $G_n$ which is an initial vertex in the acyclic graph $A_n$, we set 
$\ell(\iota_n(v))=\epsilon_n$, and determine $\ell(\iota_n(v))$ on 
all the other vertices $v$ of $G_n$ that are in $A_n$ 
by imposing that for two vertices $u,v \in V_n$ such that $(u,v)$ is an 
edge in $A_n$ we denote by 
$S_v=\{w : (w,v) \text{ is an edge of }A_n\}$ and
$$
\ell(\iota_n(v))=\min_{w\in S_v} \lambda_n \cdot \ell(\iota_n(w)).
$$ 
This condition can be easily ensured by recursion.
Roughly speaking,
the length of the interval $\iota_n(v)$ ``shrinks'' as one goes along any path in $A_n$ with shrinking rate at least $\lambda_n$.
On every vertex $v$ which is in a circuit $c$ corresponding to a finite orbit 
$p$ included in the attracted part of $\mathcal{U}_n$, 
we set $\ell(\iota_n(v))$ to be  $\lambda_n \cdot m_n (p)$, where 
$m_n(p)$ is the minimum of the numbers $\ell(\iota_n(u))$ where the vertex $u$ does not belong to the circuit $c$ but
there is $w$ in $c$ and edge $(u,w)$ belongs to $G_n$. 

For $n=1$, we can choose freely the middle points $\rho(\iota_1(v))$ for $v \in V_1$. We only have to ensure that 
any two intervals $\iota_1(v)$ have empty intersection.
When $n \ge 2$, in order to define the middle point of the interval 
$\iota_n(v)$ for each vertex $v$, %we define $\rho \circ \iota_n$ 
we will determine its relative position in the set $\iota_{n-1}(\pi_{n-1}(v))$.
It will to some extent rely on the ordering in the set
$\pi_{n-1}^{-1}(w)$ 
where $w=\pi_{n-1}(v)\in V_{n-1}$. 

Let us fix any $w \in V_{n-1}$. We distinguish two cases: 

\begin{enumerate}
\item \textbf{When $w$ is outside of any circuit included in the attracted part of $\mathcal{U}_{n-1}$:}

For $k$ such that $v$ is the $k$th of elements of $\pi_{n-1}^{-1}(w)$, see Figure~\ref{figure.cutting.process}:
\[\rho(\iota(v)) = \left(\rho(\iota(w)) - \frac{\ell(\iota(w))}{2}\right) + k \cdot \frac{\ell(\iota(w))}{|\pi_{n-1}^{-1}(w)|+1}.\]

\begin{figure}[h!]
\begin{center}
\begin{tikzpicture}[scale=0.3]

\draw[latex-latex,color=gray!70] (0,3) -- (15,3);
\node[scale=0.9] at (12.5,4) {$\ell(\iota_{n-1}(w))$};
\draw (0,0) -- (15,0);
\draw[line width=0.4mm] (7.5,-0.25) -- (7.5,0.25);
\node[scale=0.9] at (7.5,1.5) {$\rho(\iota_{n-1}(w))$};
\draw[line width=0.3mm] (0,-0.25) -- (0,0.25);
\draw[line width=0.3mm] (15,-0.25) -- (15,0.25);

\begin{scope}[xshift=-12cm]
\draw[fill=gray!90] (0,-1) circle (5pt);
\node[scale=0.9] at (-8,-4) {$\pi_{n-1}^{-1}(w)$};
\node[scale=0.9] at (-7,-1) {$w$};
\draw[-latex] (-1.25,-4) -- (-0.075,-1.25);
\draw[-latex] (-3.75,-4) -- (-0.25,-1.25);
\draw[-latex] (1.25,-4) -- (0.075,-1.25);
\draw[-latex] (3.75,-4) -- (0.25,-1.25);
\draw[fill=gray!90] (-1.25,-4) circle (5pt);
\draw[fill=gray!90] (-3.75,-4) circle (5pt);
\draw[fill=gray!90] (1.25,-4) circle (5pt);
\draw[fill=gray!90] (3.75,-4) circle (5pt);
\end{scope}

\begin{scope}[yshift=-5cm]
\draw (2.5,0) -- (3.5,0);
\draw[line width=0.3mm] (2.5,-0.25) -- (2.5,0.25);
\draw[line width=0.3mm] (3.5,-0.25) -- (3.5,0.25);
\draw[dashed] (3,0) -- (3,5);
\draw (5.5,0) -- (6.5,0);
\draw[line width=0.3mm] (5.5,-0.25) -- (5.5,0.25);
\draw[line width=0.3mm] (6.5,-0.25) -- (6.5,0.25);
\draw[dashed] (6,0) -- (6,5);
\draw (8.5,0) -- (9.5,0);
\draw[line width=0.3mm] (8.5,-0.25) -- (8.5,0.25);
\draw[line width=0.3mm] (9.5,-0.25) -- (9.5,0.25);
\draw[dashed] (9,0) -- (9,5);
\draw (11.5,0) -- (12.5,0);
\draw[line width=0.3mm] (11.5,-0.25) -- (11.5,0.25);
\draw[line width=0.3mm] (12.5,-0.25) -- (12.5,0.25);
\draw[dashed] (12,0) -- (12,5);
\node[scale=0.8] at (7.5,-1.5) {$\iota_n(v), \ v \in \pi_{n-1}^{-1}(w)$};
\end{scope}
\end{tikzpicture}
\end{center}
\caption{Illustration of the definition of $\iota_n$ on preimages 
of some vertex in $G_{n-1}$ when this vertex is not in a circuit corresponding to a finite orbit.\label{figure.cutting.process}}
\end{figure}
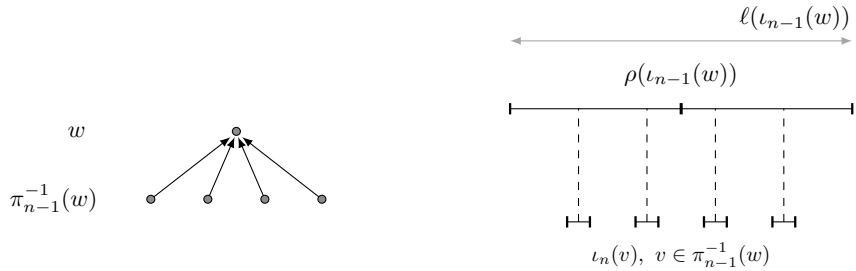

The choice of $\lambda_n$ and $\epsilon_n$ ensures that these intervals are 
disjoint and included in $\iota_{n-1}(w)$. Indeed the length of each of the intervals $\iota_n(v)$ for $v \in \pi_{n-1}^{-1} (w)$ is of the form $\lambda_n ^l \cdot \epsilon_n$ with $l \ge 0$ 
and thus is smaller than $\epsilon_n$. 
Since $\lambda_{n-1}^{|E_{n-1}|} \epsilon_{n-1}$ is smaller than the length of 
any interval $\iota_{n-1}(u)$ for $u \in V_{n-1}$, we have that for
$v \in \pi_{n-1}^{-1} (w)$
\[\ell(\iota_{n}(v)) \le \frac{1}{2} \cdot \frac{\ell(\iota_{n-1}(w))}{|V_n| +1} < \frac{1}{2} \cdot \frac{\ell(\iota_{n-1}(w))}{|\pi_{n-1}^{-1} (w)| +1}.\]
This implies that these intervals are disjoint and included in $\iota_{n-1}(w)$.

\item  \textbf{When $w$ is in a circuit included in the attracted part of $\mathcal{U}_{n-1}$:}

Let us denote by $p$ the finite orbit corresponding to this circuit.
Let us consider the intervals $I^{(n)}_k(w)$, $0 \le k \le \omega_n(p)$ such that for $k < \omega_n (p)$:
\begin{equation}
\rho(I^{(n)}_k(w)) = \rho(\iota_{n-1}(w)) - \frac{\ell(\iota_{n-1}(w))}{2} +  \frac{1}{2^{kn} } \cdot \frac{\ell(\iota_{n-1}(w))}{2}.\label{eq:rho_reg}
\end{equation}
In other words, this means that for $k < \omega_n(p)$ the distance between the center of the interval $I^{(n)}_k(w)$ and the leftmost point of the interval $\iota_{n-1}(w)$ is 
$\frac{1}{2} \cdot \frac{1}{2^{kn}} \cdot \ell(\iota_{n-1}(w))$.

When $k = \omega_n (p)$: 
\[\rho(I^{(n)}_k(w)) = \rho(\iota_{n-1}(w)) - \frac{\ell(\iota_{n-1}(w))}{2} + \frac{1}{8 \cdot 2^{n \omega_n (p)}} \cdot \ell(\iota_{n-1}(w)).\]

Moreover for all $k \le \omega_n(p)$, $\ell(I^{(n)}_k(w)) = 
\frac{1}{4 \cdot 2^{n \omega_n (p)}} \cdot \ell(\iota_{n-1}(w))$. 
These intervals are used as "containers" for the intervals $\iota_n(v)$ for 
$v \in \pi_{n-1}^{-1}(w)$. 

 In each of the intervals $I^{(n)}_k(w)$ we place the intervals 
 $\iota_n(v)$ for $v$ preimages of $w$ which are distance $k$ from the circuit, as follows. 
 For $j$
such that $v$ is the $j$th of elements of the set $S_k(w) = \{v \in \pi_{n-1}^{-1} (w) \ : \ \delta_n (v) = \omega_n(p) - k\}$:
\[\rho(\iota(v)) = \left(\rho(I^{(n)}_k(w)) - \frac{\ell(I^{(n)}_k(w))}{2}\right) + j \cdot \frac{\ell(I^{(n)}_k(w))}{|S_k(w)|+1}.\]
See an illustration on Figure~\ref{figure.cutting.process.cycles}.

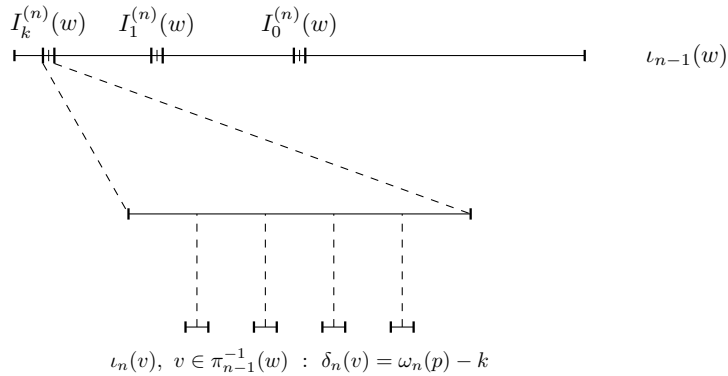
\begin{figure}[h!]
\begin{center}
\begin{tikzpicture}[scale=0.3]

\draw (-5,7) -- (20,7);
\draw[line width=0.3mm] (-5,7.25) -- (-5,6.75);
\draw[line width=0.3mm] (20,7.25) -- (20,6.75);
\draw[line width=0.1mm] (7.5,7.25) -- (7.5,6.75);
\draw[line width=0.3mm] (7.25,7.35) -- (7.25,6.65);
\draw[line width=0.3mm] (7.75,7.35) -- (7.75,6.65);
\node[scale=0.9] at (7.5,8.5) {$I^{(n)}_0(w)$};
\node[scale=0.9] at (-3.5,8.5) {$I^{(n)}_k(w)$};
\draw[dashed] (-3.75,6.65) -- (0,0);
\draw[dashed] (-3.25,6.65) -- (15,0);
\node[scale=0.9] at (1.25,8.5) {$I^{(n)}_1(w)$};
\draw[line width=0.1mm] (1.25,7.25) -- (1.25,6.75);
\draw[line width=0.3mm] (1,7.35) -- (1,6.65);
\draw[line width=0.3mm] (1.5,7.35) -- (1.5,6.65);
\draw[line width=0.1mm] (-3.5,7.25) -- (-3.5,6.75);
\draw[line width=0.3mm] (-3.75,7.35) -- (-3.75,6.65);
\draw[line width=0.3mm] (-3.25,7.35) -- (-3.25,6.65);
\node[scale=0.9] at (24.5,7) {$\iota_{n-1}(w)$};
\draw (0,0) -- (15,0);
\draw[line width=0.3mm] (0,-0.25) -- (0,0.25);
\draw[line width=0.3mm] (15,-0.25) -- (15,0.25);

\begin{scope}[yshift=-5cm]
\draw (2.5,0) -- (3.5,0);
\draw[line width=0.3mm] (2.5,-0.25) -- (2.5,0.25);
\draw[line width=0.3mm] (3.5,-0.25) -- (3.5,0.25);
\draw[dashed] (3,0) -- (3,5);
\draw (5.5,0) -- (6.5,0);
\draw[line width=0.3mm] (5.5,-0.25) -- (5.5,0.25);
\draw[line width=0.3mm] (6.5,-0.25) -- (6.5,0.25);
\draw[dashed] (6,0) -- (6,5);
\draw (8.5,0) -- (9.5,0);
\draw[line width=0.3mm] (8.5,-0.25) -- (8.5,0.25);
\draw[line width=0.3mm] (9.5,-0.25) -- (9.5,0.25);
\draw[dashed] (9,0) -- (9,5);
\draw (11.5,0) -- (12.5,0);
\draw[line width=0.3mm] (11.5,-0.25) -- (11.5,0.25);
\draw[line width=0.3mm] (12.5,-0.25) -- (12.5,0.25);
\draw[dashed] (12,0) -- (12,5);
\node[scale=0.8] at (7.5,-1.5) {$\iota_n(v), \ v \in \pi_{n-1}^{-1}(w) \ : \ \delta_n (v) = \omega_n(p)-k$};
\end{scope}
\end{tikzpicture}
\end{center}
\caption{Illustration of the definition of $\iota_n(v)$ for $v$ a preimage by $\pi_{n-1}$ of $w$ which is in a circuit of the attracted part of $\mathcal{U}_{n-1}$.\label{figure.cutting.process.cycles}}
\end{figure}
For similar reasons as in the first case, the intervals $I^{(n)}_k(w)$ are 
pairwise disjoint and included in $\iota_{n-1}(w)$, and the intervals $\iota_n (v)$ 
for $v \in \pi_{n-1}^{-1} (w)$ such that $\delta_n (w) = \omega_n(p) - k$ are disjoint 
and included in $I^{(n)}_k(w)$.
\end{enumerate}

\subsubsection{Embedding with vanishing derivative}

Since for all $n \ge 1$ and $v \in V_n$, 
\[\ell(\iota_n(v)) \le \epsilon_n \le \frac{1}{4^n},\]
and that for all $v$, $\iota_{n}(v) \subset \iota_{n-1} (\pi_{n-1}(v))$, 
for all $\textbf{v} \in V_{\textbf{G},\pi}$, the intersection $\bigcap_{n \ge 1} \iota_n (\textbf{v}_n)$ is 
reduced to a point in $\mathbb{R}$. Let us denote this point by $\psi(x)$, where $x$ is equal 
to $\varphi_{\mathbb{U},f}^{-1}(\textbf{v})$. By construction, $\psi$ is continuous and injective and thus a homeomorphism onto 
its image. Let us denote $Z= \psi(X)$. As a consequence $\psi$ conjugates $f$ with a map $\sigma : Z \rightarrow Z$. 

We are going to prove that for all $z \in Z$, $\sigma'(z)$ is defined and equal to $0$. Then a direct 
application of Theorem~\ref{theorem.jarnik} will end the proof of the theorem. 

Let us set $z=\psi(x)$.
We will prove that for all $n \ge 1$ and $\textbf{v} \in V_{\textbf{G},\pi}$, for $\textbf{v}'$ sufficiently close to $\textbf{v}$, we have:
\begin{equation}
	\left|\psi(f \circ \varphi_{\mathbb{U},f}^{-1}(\textbf{v}))-\psi(f \circ \varphi_{\mathbb{U},f}^{-1}(\textbf{v}'))\right|\le \frac{1}{2^n} \cdot \left|\psi( \varphi_{\mathbb{U},f}^{-1}(\textbf{v}))-\psi(\varphi_{\mathbb{U},f}^{-1}(\textbf{v}'))\right|
\label{eq:derrivativezeroestimate}
\end{equation}
which implies that
\[
\sigma'(z)=\sigma'(\psi(x))=\sigma'(\psi(\varphi_{\mathbb{U},f}^{-1}(\textbf{v})))=0.
\]
Let us consider some $\textbf{v} \in V_{\textbf{G},\pi}$ and $n \ge 1$. By construction 
there exists $m > n+3$ such that for all $k \ge m$, $\chi_k(\textbf{v}_k) \neq *$. Since 
divergent vertices coincide with markers, for all $k \ge m$, $\textbf{v}_k$ is not 
divergent.
Let us prove the above inequality when $\textbf{v}'$ coincides with 
$\textbf{v}$ on the $m$ first elements. Let us consider such sequence $\textbf{v}'$ 
and denote by $l$ the smallest among integers $k > m$ such that $\textbf{v}_k \neq \textbf{v}'_k$. From this point we distinguish two cases:

\begin{enumerate}
\item \textbf{The point $\textbf{v}$ is not periodic for the map $\underline{f}_{\mathbb{U}^{*}}$:} 
Thus we can assume without loss of generality that 
$\textbf{v}_{l-1} = \textbf{v}'_{l-1}$ is not in a circuit 
included in the attracted part of $(\mathcal{U}_{l-1},\tau_{l-1})$. 
As a consequence, and since $\textbf{v}_{l-1}$ is not divergent,
$\psi(f \circ \varphi_{\mathbb{U}^{*},f}^{-1}(\textbf{v}))$ and $\psi(f \circ \varphi_{\mathbb{U}^{*},f}^{-1}(\textbf{v}'))$ both lie in the same interval and this interval 
has (by definition of $\iota_{l-1}$) length bounded from above by $\lambda_{l-1} \cdot \ell(\iota_{l-1}(\textbf{v}_{l-1}))$. As a consequence: 
\[\left|\psi(f \circ \varphi_{\mathbb{U}^{*},f}^{-1}(\textbf{v}))-\psi(f \circ \varphi_{\mathbb{U}^{*},f}^{-1}(\textbf{v}'))\right| \le \lambda_{l-1} \cdot \ell(\iota_{l-1}(\textbf{v}_{l-1})).\]
By definition of $\lambda_{l-1}$ and $\rho$ we thus have: 

\[\lambda_{l-1} \cdot \ell(\iota_{l-1}(\textbf{v}_{l-1})) \le \frac{1}{2^{l-1}} \cdot \frac{\ell(\iota_{l-1}(\textbf{v}_{l-1}))}{
	%\displaystyle{\max_{v \in V_{l-1}} |\pi_{l-1}^{-1}(\textbf{v}_{l-1})| +1}
|V_l|+1} \le \frac{1}{2^{l-1}} \cdot \left|\rho(\iota_l (\textbf{v}_l))-\rho(\iota_l (\textbf{v}'_l))\right| \]
As a direct consequence: 
\[\left|\psi(f \circ \varphi_{\mathbb{U}^{*},f}^{-1}(\textbf{v}))-\psi(f \circ \varphi_{\mathbb{U}^{*},f}^{-1}(\textbf{v}'))\right| \le \frac{1}{2^{l-1}} \cdot \left|\rho(\iota_l (\textbf{v}_l))-\rho(\iota_l (\textbf{v}'_l))\right|.\]
Since $\psi (\varphi_{\mathbb{U}^{*},f}^{-1}(\textbf{v})) \in \iota_l (\textbf{v}_l)$ and $\psi (\varphi_{\mathbb{U}^{*},f}^{-1}(\textbf{v}')) \in \iota_l (\textbf{v}'_l)$ these points 
are at distance less than $\frac{1}{2} \ell(\iota_l (\textbf{v}_l))=\frac{1}{2} \ell(\iota_l (\textbf{v}'_l))$ from the respective points $\rho(\iota_l (\textbf{v}_l))$ and $\rho(\iota_l (\textbf{v}'_l))$. 
Recall that by the definition we have
$$
\left|\rho(\iota_l (\textbf{v}_l))-\rho(\iota_l (\textbf{v}'_l))\right| \ge \frac{\ell(\iota_{l-1} (\textbf{v}_{l-1}))}{|V_l|+1}
\ge 2\ell(\iota_{l} (\textbf{v}_l))
$$

and therefore
\begin{eqnarray*}
\left|\psi( \varphi_{\mathbb{U}^{*},f}^{-1}(\textbf{v}))-\psi(\varphi_{\mathbb{U}^{*},f}^{-1}(\textbf{v}'))\right| &\ge& \left|\rho(\iota_l (\textbf{v}_l))-\rho(\iota_l (\textbf{v}'_l))\right|-\ell(\iota_l (\textbf{v}_l))\\
&\ge& \frac{1}{2} \left|\rho(\iota_l (\textbf{v}_l))-\rho(\iota_l (\textbf{v}'_l))\right|.
\end{eqnarray*}

This gives
\[\left|\psi(f \circ \varphi_{\mathbb{U}^{*},f}^{-1}(\textbf{v}))-\psi(f \circ \varphi_{\mathbb{U}^{*},f}^{-1}(\textbf{v}'))\right| \le \frac{1}{2^{l-2}} \cdot \left|\psi( \varphi_{\mathbb{U}^{*},f}^{-1}(\textbf{v}))-\psi(\varphi_{\mathbb{U}^{*},f}^{-1}(\textbf{v}'))\right|\]
completing the proof of this case, since $l>m>n$, in particular $l-2\geq n$, and so \eqref{eq:derrivativezeroestimate} holds in this case.

\item \textbf{The point $\textbf{v}$ is periodic for the map $\underline{f}_{\mathbb{U}^{*}}$:}
In this case we can assume without loss of generality that 
$\textbf{v}_{l-1} = \textbf{v}'_{l-1}$ is in a circuit corresponding to a finite
orbit $p$ included in the attracted part of $(\mathcal{U}_{l-1},\tau_{l-1})$, and so is $\textbf{v}_l$. 
As a consequence $\textbf{v}'_l$ is not 
in this circuit (otherwise we would have $\textbf{v}_{l} = \textbf{v}'_{l}$ which contradicts the choice of $l$).

This implies that the point $\psi(\varphi_{\mathbb{U}^{*},f}^{-1}(\textbf{v}'))$ lies in some interval $I^{(l)}_k(\textbf{v}_{l-1})$ 
constructed in Section~\ref{section.setting} for $w=\textbf{v}_{l-1}$ and the integer $l$, for $k<\omega_l(p)$, where $p$ is the circuit to which $\textbf{v}_l$ belongs. The point $\psi(\varphi_{\mathbb{U}^{*},f}^{-1}(\textbf{v}))$ is the 
left extreme point of the interval $I^{(l)}_{\omega_l(p)} (\textbf{v}_{l-1})$. As a consequence the distance between the two points $\psi(\varphi_{\mathbb{U}^{*},f}^{-1}(\textbf{v}))$ and $\psi(\varphi_{\mathbb{U}^{*},f}^{-1}(\textbf{v}'))$ is larger than:

\[\left(\frac{1}{2} \frac{1}{2^{kl}} - \frac{1}{4 \cdot 2^{l\omega_l(p)}} - \frac{1}{8 \cdot 2^{l\omega_l(p)}} \right) \ell(\iota_{l-1}(\textbf{v}_{l-1})) \ge \frac{1}{8} \cdot \frac{1}{2^{kl}}  \ell(\iota_{l-1}(\textbf{v}_{l-1})).\]
With a similar reasoning the distance between $\psi(f \circ \varphi_{\mathbb{U}^{*},f}^{-1}(\textbf{v}))$ and $\psi(f \circ \varphi_{\mathbb{U}^{*},f}^{-1}(\textbf{v}'))$ is smaller 
than: 
\[\left(\frac{1}{2} \frac{1}{2^{(k+1)l}} + \frac{1}{8 \cdot 2^{l\omega_l(p)}}\right) \ell(\iota_{l-1}(\textbf{v}_{l-1})) \le \frac{5}{8 \cdot 2^{(k+1)l}} \ell(\iota_{l-1}(\textbf{v}_{l-1})).\]
By combining the equations we have that: 
\[\left|\psi(f \circ \varphi_{\mathbb{U}^{*},f}^{-1}(\textbf{v}))-\psi(f \circ \varphi_{\mathbb{U}^{*},f}^{-1}(\textbf{v}'))\right| \le %\frac{5}{4} \frac{1}{2^{l}} 
\frac{5}{2^{l}}\cdot \left|\psi( \varphi_{\mathbb{U}^{*},f}^{-1}(\textbf{v}))-\psi(\varphi_{\mathbb{U}^{*},f}^{-1}(\textbf{v}'))\right|.\]
As a consequence, since $l > n+3$: 
\[\left|\psi(f \circ \varphi_{\mathbb{U}^{*},f}^{-1}(\textbf{v}))-\psi(f \circ \varphi_{\mathbb{U}^{*},f}^{-1}(\textbf{v}'))\right| \le \frac{1}{2^{n}} \cdot \left|\psi( \varphi_{\mathbb{U}^{*},f}^{-1}(\textbf{v}))-\psi(\varphi_{\mathbb{U}^{*},f}^{-1}(\textbf{v}'))\right|\]
which is the equation \eqref{eq:derrivativezeroestimate}.
\end{enumerate}

This concludes the proof of Theorem~\ref{theorem.main}.

\begin{remark}
In the first case of the above enumeration, we used critically the fact that divergent 
points coincide with markers. Without this, $\psi(f \circ \varphi_{\textbf{U},f}^{-1}(\textbf{v}))$ and $\psi(f \circ \varphi_{\textbf{U},f}^{-1}(\textbf{v}'))$ could actually 
belong to two different intervals. In this case we would not have a sufficiently 
small upper bound on their distance.
\end{remark}	

\section*{Acknowledgements}
This work was supported by National Science Centre, Poland (NCN),\break grant no. 2019/35/B/ST1/02239.


\begin{thebibliography}{JLKP17}

\bibitem{BC}\textbf{ L. S. Block and W. A. Coppel}, \textit{Dynamics in one dimension}, Lecture Notes in Mathematics, 1513. Springer-Verlag, Berlin, 1992.

\bibitem{BKOAHP} 
\textbf{J.P. Boro\'nski, J. Kupka, and P. Oprocha},
\textit{Edrei’s Conjecture Revisited}, Ann. Henri Poincar\'e, \textbf{19} (2018), 267–281.

		\bibitem{BKO}
\textbf{J.P. Boro\'nski, J. Kupka and P. Oprocha},
\newblock \textit{All minimal Cantor systems are slow},
\newblock Bull. Lond. Math. Soc., \textbf{51(6)} (2019), 937--944.
		

\bibitem{CiCi} \textbf{M. Ciesielska and K.C. Ciesielski}, \textit{Differentiable extension theorem: a lost proof of V. Jarník}, J. Math. Anal. Appl., \textbf{454(2)} (2017), 883--890.

		\bibitem{Downarowicz}
		\textbf{T. Downarowicz and O.Karpel},
		\newblock \textit{Dynamics in dimension zero a survey},
		\newblock Discr. Cont. Dyn. Sys., \textbf{38(3)} (2018), 1033–1062.
		
\bibitem{Edrei} \textbf{A. Edrei}, \textit{On mappings which do not increase small distances}, Proc. Lond. Math. Soc., \textbf{3(2)} (1952), 272–278.


\bibitem{Jarnik}
\textbf{V. Jarn\'\i{}k}, \textit{Sur l’extension du domaine de définition des fonctions d’une variable, qui laisse intacte la dérivabilité de la
fonction}, Bull. Internat. Acad. Sci. Boheme, (1923) 1–5.

		
		\bibitem{Shinomura}
		\textbf{T. Shinomura},
		\newblock \textit{Special homeomorphisms and approximation for Cantor systems},
		\newblock Top. Appl., \textbf{161} (2014), 178–195.
		
		\bibitem{Shinomura.odometers.extensions}
		\textbf{T. Shinomura},
		\newblock \textit{Zero-dimensional almost 1-1 extensions of odometers from graph covering},
		\newblock Top. Appl., \textbf{209} (2016), 63-90.
		
		
		\bibitem{Gambaudo-Martens}
		\textbf{J.-M. Gambaudo and M. Martens},
		\newblock \textit{Algebraic topology for minimal Cantor sets},
		\newblock Ann. Henri Poincar\'e, \textbf{7} (2006), 423-446.
		
		\bibitem{Meydinets}
		\textbf{K. Meydinets},
		\newblock \textit{Cantor aperiodic systems and Bratteli diagrams},
		\newblock C. R. Acad. Sci. Paris, Ser. I, \textbf{342} (2006), 43–46.
		
		\bibitem{ciesielski}
		\textbf{K. Ciesielski and J. Jasinski},
		\newblock \textit{An auto-homeomorphism of a Cantor set with derivative zero
			everywhere},
		\newblock J. Math. Anal. Appl., \textbf{434} (2016), 1267-1280.

\bibitem{Williams} \textbf{R.F. Williams}, \textit{Local contractions of compact metric sets which are not local isometries}, Proc. Amer. Math. Soc., \textbf{5} (1954), 652–654.
	\end{thebibliography}
\end{document}